\newtheorem{thm}{Theorem}[section]
\newtheorem{lemma}[thm]{Lemma}
\newtheorem{corollary}[thm]{Corollary}
\newtheorem{prop}[thm]{Proposition}
\theoremstyle{definition}
\newtheorem{rem}[thm]{Remark}
\newtheorem{defn}[thm]{Definition}
\newcommand{\isom}{\overset{\sim}{\rightarrow}}
\title{Zariski density of crystalline representations for any $p$-adic field.}
\author{Kentaro Nakamura}
\date{} 
\begin{document}

\maketitle
\pagestyle{plain}
\footnote{2010 Mathematical Subject Classification 11F80 (primary), 11F85 (secondary).
Keywords: $p$-adic Hodge theory, trianguline representations, B-pairs.}
\begin{abstract}
The aim of this article is to prove Zariski density of crystalline representations 
in the rigid analytic space associated to the universal deformation ring 
of a $d$-dimensional mod $p$ representation of $\mathrm{Gal}(\overline{K}/K)$ for any $d$ and 
any $p$-adic field $K$. 
This is a generalization of the results of Colmez, Kisin for $d=2$ and $K=\mathbb{Q}_p$, 
of the author for $d=2$ and any $K$, and of Chenevier for any $d$ and $K=\mathbb{Q}_p$. 
A key ingredient for the proof is to construct a $p$-adic family of trianguline representations which can be seen as a local analogue of eigenvarieties. 
In this article, we construct such a family by generalizing Kisin's theory of finite slope subspace $X_{fs}$ for 
any $d$ and any $K$, and using Bella\"iche-Chenevier's idea of using exterior products in the study of trianguline deformations.

\end{abstract}
\setcounter{tocdepth}{2}
\tableofcontents

\section{Introduction.}
\subsection{Background.}
Let $K$ be a finite extension of $\mathbb{Q}_p$ and $d\in \mathbb{Z}_{\geqq 1}$. Let $E$ be a sufficiently large finite extension of $\mathbb{Q}_p$ with the ring of integer
$\mathcal{O}$ and the residue field $\mathbb{F}$. Let 
$\overline{V}$ be a $\mathbb{F}$-representation of $G_K:=\mathrm{Gal}(\overline{K}/K)$ of rank $d$,  i.e. a $d$-dimensional 
$\mathbb{F}$-vector space with a continuous $\mathbb{F}$-linear $G_K$-action. Let $\mathcal{C}_{\mathcal{O}}$ be the category of 
Artin local $\mathcal{O}$-algebras with residue field $\mathbb{F}$. We consider the deformation functor $D_{\overline{V}}:\mathcal{C}_{\mathcal{O}}\rightarrow (Sets)$ defined by $D_{\overline{V}}(A):=\{$equivalent classes of deformations 
of $\overline{V}$ over $A\}$ for $A\in\mathcal{C}_{\mathcal{O}}$. Assume that $\mathrm{End}_{\mathbb{F}[G_K]}(\overline{V})=\mathbb{F}$, 
then $D_{\overline{V}}$ is 
representable by the universal deformation ring $R_{\overline{V}}$.
 Let $\mathcal{X}_{\overline{V}}$ be the rigid analytic space associated 
to $R_{\overline{V}}$.  The points of $\mathcal{X}_{\overline{V}}$ correspond to 
$p$-adic representations of $G_K$ with mod $p$ reduction isomorphic to $\overline{V}$. Define the subset $\mathcal{X}_{\overline{V},\mathrm{reg}-\mathrm{cris}}$ of $\mathcal{X}_{\overline{V}}$ by 
\begin{multline*}
\mathcal{X}_{\overline{V},\mathrm{reg-\mathrm{cris}}}:=\{x=[V_x]\in \mathcal{X}_{\overline{V}}|V_x \text{ is crystalline with Hodge-Tate weights }\\
\{k_{i,\sigma}\}_{1\leqq i\leqq d,\sigma:\mathbb{Q}_p\hookrightarrow\overline{\mathbb{Q}}_p} 
 \text{ such that  }  k_{i,\sigma}\not=k_{j,\sigma} 
 \text{ for any } i\not=j \text{ and }\sigma:K\hookrightarrow \overline{\mathbb{Q}}_p\}
   \end{multline*}
We denote by $\overline{\mathcal{X}}_{\overline{V},\mathrm{reg}-\mathrm{cris}}$ the Zariski closure of $\mathcal{X}_{\overline{V},\mathrm{reg}-\mathrm{cris}}$ in $\mathcal{X}_{\overline{V}}$. The main results of this article which will be proved in $\S4$ concern with Zariski density of $\mathcal{X}_{\overline{V},\mathrm{reg}-\mathrm{cris}}$ in $\mathcal{X}_{\overline{V}}$.
For example, one of the main results of this article is following (see Corollary \ref{1.18}).

\begin{thm}\label{0.1}
If $\overline{V}$ is absolutely irreducible and satisfies $(i)$ $p\not|d$ and $\zeta_p\in K$, or $(ii)$ $\overline{V}\not\isom \overline{V}(\omega)$, then 
we have an equality 
$$\overline{\mathcal{X}}_{\overline{V},\mathrm{reg}-\mathrm{cris}}=\mathcal{X}_{\overline{V}},$$
where $\omega:G_K^{\mathrm{ab}}\rightarrow \mathbb{F}^{\times}$ is the mod $p$ cyclotomic character. 

\end{thm}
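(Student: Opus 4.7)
The plan is to bridge $\mathcal{X}_{\overline{V},\mathrm{reg}-\mathrm{cris}}$ and $\mathcal{X}_{\overline{V}}$ by a third space, the trianguline deformation variety $X_{\mathrm{tri}}(\overline{V})$, whose points parametrize pairs $(V,\mathcal{F}_\bullet)$ with $V$ a lift of $\overline{V}$ and $\mathcal{F}_\bullet$ a complete filtration by saturated rank-one sub-$B$-pairs of the $B$-pair associated with $V$. This space comes with a forgetful morphism $\pi\colon X_{\mathrm{tri}}(\overline{V})\to\mathcal{X}_{\overline{V}}$ and with a morphism to the rigid parameter space of $d$-tuples of rank-one $B$-pair parameters, recording the successive quotients of $\mathcal{F}_\bullet$. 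The theorem then reduces to two stages: (a) regular crystalline points are Zariski dense in $X_{\mathrm{tri}}(\overline{V})$, and (b) the image of $\pi$ is Zariski dense in $\mathcal{X}_{\overline{V}}$.

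For the construction of $X_{\mathrm{tri}}(\overline{V})$, I would generalize Kisin's finite slope subspace $X_{fs}$ from the case $K=\mathbb{Q}_p$ to arbitrary $p$-adic $K$, working in the category of $B$-pairs in place of $(\varphi,\Gamma)$-modules over the Robba ring. Over an affinoid family of $B$-pairs with prescribed Sen weights, $X_{fs}$ is cut out as the Zariski-closed locus where a rank-one sub-$B$-pair with a given parameter exists; iterating this construction on successive quotients produces $X_{\mathrm{tri}}(\overline{V})$. For stage (a), I would study the local geometry at a regular crystalline non-critical point $x$, following Bella\"iche-Chenevier: rather than parametrizing the flag $\mathcal{F}_\bullet$ directly (a complicated functor), one encodes $\mathcal{F}_\bullet$ via its induced rank-one sub-$B$-pairs of the exterior powers $\wedge^i V$, which linearizes the deformation problem. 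A Galois cohomology calculation for these exterior powers then establishes smoothness at $x$ of the expected dimension, and by varying the Sen-weight component of the map to the parameter space one concludes that regular crystalline points accumulate in $X_{\mathrm{tri}}(\overline{V})$.

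The main obstacle is stage (b), where the hypotheses enter. The strategy is to prove that on each irreducible component of $\mathcal{X}_{\overline{V}}$ the locus of trianguline deformations is large, which reduces inductively to producing, for a generic lift $V$, a saturated embedding $\delta\hookrightarrow V$ of some rank-one $B$-pair. The existence of such a $\delta$ is controlled by $\mathrm{Ext}^1$ and $\mathrm{Hom}$ groups in the category of $B$-pairs which, via Tate local duality for $B$-pairs, translate into Galois cohomology of twists of $V$. Under hypothesis $(ii)$, absolute irreducibility of $\overline{V}$ together with $\overline{V}\not\isom\overline{V}(\omega)$ forces the vanishing of an obstruction of the form $H^0(G_K,\mathrm{End}(\overline{V})(1))$, ensuring that the cyclotomic twist in the parameter of $\delta$ can be freely prescribed; under hypothesis $(i)$, the condition $p\nmid d$ with $\zeta_p\in K$ allows extraction of a $d$-th root of $\det V$ in the character parameter space, supplying enough freedom in the tuple of trianguline parameters (whose product is constrained to be $\det V$) to realize the required $\delta$. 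Combining stages (a) and (b) and passing to Zariski closures then yields the equality $\overline{\mathcal{X}}_{\overline{V},\mathrm{reg}-\mathrm{cris}}=\mathcal{X}_{\overline{V}}$.
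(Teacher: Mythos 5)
Your stage (a) and the construction of $X_{\mathrm{tri}}(\overline V)$ (the paper's $\mathcal E_{\overline V}$, built via a generalized finite slope subspace with the exterior-power trick of Bella\"iche--Chenevier) track the paper well. But stage (b) as you formulate it contains a real gap, and it also misidentifies where the hypotheses $(i)$, $(ii)$ enter.

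You want to show the image of $\pi$ is Zariski dense by arguing that ``on each irreducible component the locus of trianguline deformations is large,'' which you reduce to producing a saturated rank-one sub-$B$-pair of a generic lift $V$. This is not how the argument can go: a generic lift of $\overline V$ is \emph{not} trianguline, and the set of trianguline points is only a countable union of closed subsets whose density is precisely the conclusion, not an input. The actual mechanism is local and infinitesimal: at a single benign crystalline point $x$, the trianguline deformation functors $D_{V_x,T_\tau}$ for the $d!$ triangulations $T_\tau$ attached to orderings of the Frobenius eigenvalues have tangent spaces that \emph{together} span $t_{R_{V_x}}$ (Chenevier's theorem, Theorem~\ref{4.3} in the paper). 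Combined with the smoothness of $\mathcal E_{\overline V}$ at the points $z_{(V_x,T_\tau)}$ (Theorem~\ref{1.7}) and the Zariski density of crystalline points in $\mathcal E_{\overline V}$ near those points (Theorem~\ref{1.13}), this forces $t_{Z,x}=t_{\mathcal X_{\overline V},x}$ for the component $Z$ of $\overline{\mathcal X}_{\overline V,\mathrm{reg\text{-}cris}}$ through $x$, hence $\overline{\mathcal X}_{\overline V,\mathrm{reg\text{-}cris}}$ is a \emph{union of irreducible components} of $\mathcal X_{\overline V}$ of the right dimension (Theorem~\ref{1.16}). This infinite-fern tangent-space step is the heart of the proof and is missing from your proposal.

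The hypotheses $(i)$, $(ii)$ then enter only at the final step, to pass from ``a union of irreducible components'' to ``everything.'' Under $(ii)$ with $\zeta_p\notin K$, absolute irreducibility and $\overline V\not\isom\overline V(\omega)$ give $H^0(G_K,\mathrm{ad}(\overline V)^0(\omega))=0$, and together with $H^0(G_K,\mathbb F(\omega))=0$ this gives $H^2(G_K,\mathrm{ad}(\overline V))=0$, so $R_{\overline V}$ is formally smooth and $\mathcal X_{\overline V}$ is irreducible. Under $(i)$, $\mathcal X_{\overline V}$ may have several components $\mathcal X_i$, indexed by the value of the determinant on the $p$-power roots of unity in $K$; twisting by powers of $\sigma_0\circ\chi_{\mathrm{LT}}$ (a crystalline character) permutes the $\mathcal X_i$, and $p\nmid d$ makes the action transitive, so once one component lies in $\overline{\mathcal X}_{\overline V,\mathrm{reg\text{-}cris}}$ they all do. Your ``$d$-th root of $\det V$'' idea is in a similar spirit but is not what is used, and the role you assign to $H^0(G_K,\mathrm{End}(\overline V)(1))$ (as an obstruction to prescribing the cyclotomic twist of a rank-one sub-$B$-pair) is not the correct one: it appears only through the vanishing of $H^2(\mathrm{ad})$ controlling irreducibility of the whole deformation space. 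You would also need to verify $\mathcal X_{\overline V,\mathrm{reg\text{-}cris}}\neq\emptyset$, which the paper does explicitly by exhibiting an induced crystalline lift from $K_d/K$; this too is absent from your sketch.
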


This theorem is a generalization 
of the results of Colmez, Kisin \cite{Co08}, \cite{Ki10} for $d=2$ and $K=\mathbb{Q}_p$,
of the author \cite{Na10} for $d=2$ and any $K$, and of Chenevier  \cite{Ch13} for any $d$ and
$K=\mathbb{Q}_p$. 
When $d=2$ and $K=\mathbb{Q}_p$, the result of Colmez and Kisin plays some crucial roles in their studies of 
$p$-adic Langlands correspondence for $\mathrm{GL}_2(\mathbb{Q}_p)$.

The idea of the proof is essentially the same as those of \cite{Co08}, \cite{Ki10}, \cite{Na10}, \cite{Ch13},
i.e. we re-interpret purely locally the argument of infinite fern of Gouv\^ea-Mazur by using the notion of trianguline representations. 
Inspired by the work  of Kisin \cite{Ki03} on a $p$-adic Hodge theoretic study of Coleman-Mazur eigencurve, where 
he essentially proved that the restrictions to $G_{\mathbb{Q}_p}$ of the two dimensional $p$-adic representations of $G_{\mathbb{Q}}$ parametrized by Coleman-Mazur eigencurve are trianguline, Colmez \cite{Co08} defined and studied trianguline representations for $K=\mathbb{Q}_p$ by using the theory of $(\varphi,\Gamma)$-modules over the Robba ring.  In \cite{Na09}, the author of 
this article generalized Colmez's results, i.e. studied trianguline representations for general $K$ by using the theory of $B$-pair 
defined by Berger \cite{Be08}.

There are two key ingredients for the proof of the main theorem. One is the deformation theory of trianguline representations, and the other is the construction of a ``universal" $p$-adic family of trianguline representations in which the set of the crystalline points is 
Zariski dense.

For the deformation theory of trianguline representations, we have already obtained satisfying results in \cite{BeCh09}, \cite{Ch11} for 
$K=\mathbb{Q}_p$ and in \cite{Na10} for general $K$.

The other one (i.e. the construction of a $p$-adic family of trianguline representations) is more important, which can be  seen as a construction of local analogue of eigenvarieties.
For $K=\mathbb{\mathbb{Q}}_p$ and $d=2$, two different constructions by Colmez and Kisin are known. 
Colmez \cite{Co08} explicitly constructed (more generally) a $p$-adic family of rank two trianguline $(\varphi,\Gamma)$-modules over the Robba ring by explicitly calculating the cohomology of some rank one $(\varphi,\Gamma)$-modules over the relative Robba ring of affinoid algebras.
On the other hands, Kisin \cite{Ki03} constructed a Zariski closed subspace $X_{fs}$ of $\mathcal{X}_{\overline{V}}\times_E\mathbb{G}_{m/E}^{\mathrm{an}}$ using his theory of the finite slope subspace, which is 
(roughly) defined as the subspace consisting of the points $([V],\lambda)\in 
\mathcal{X}_{\overline{V}}\times_E\mathbb{G}_{m/E}^{\mathrm{an}}$ such that $\bold{D}^+_{\mathrm{crys}}(V)^{\varphi=\lambda}\not=0$, and showed that the family of 
$p$-adic Galois representation on this subspace is a universal (in some sense) $p$-adic family of trianguline representations. For any $d\in \mathbb{Z}_{\geqq 1}$ but for $K=\mathbb{Q}_p$, Chenevier \cite{Ch13} 
recently generalized Colmez's construction and constructed 
a universal $p$-adic family of rank $d$ trianguline $(\varphi,\Gamma)$-modules by further developing 
the cohomology theory of $(\varphi,\Gamma)$-modules over the relative Robba ring of affinoid algebras. Because his calculation of cohomologies heavily depends on the explicit structure of $(\varphi,\Gamma)$-modules which is available only for  $K=\mathbb{Q}_p$, we cannot directly generalize his results for general $K$. The main feature of this article is to construct 
$p$-adic families of trianguline representations for any $d$ and for any $K$ by 
generalizing Kisin's theory of finite slope subspace. For $d=2$, we have already done this in \cite{Na10}. To generalize the construction of \cite{Na10} for higher dimensional case, we use an idea of Bella\"iche-Chenevier of using exterior products in the study of trianguline deformations. We explain our results in detail below.

\subsection{Overview}
Here, we give an overview of the contents of this article. 

In$ \S$ 2, we first recall the fundamentals of trianguline representations using the notion of $B$-pairs. The notion of $B$-pairs was defined by Berger \cite{Be08}. 
The category of 
$E$-representation of $G_K$ can be naturally embedded in the category of $E$-$B$-pairs 
of $G_K$. For an $E$-representation $V$, we denote by $W(V)$ the associated $E$-$B$-pair.
We say that $V$ is a split trianguline $E$-representation if $W(V)$ can be written as a successive extension 
of rank one $E$-$B$-pairs, i.e. there exists a filtration $T:0\subseteq W_1\subseteq W_2\subseteq 
\cdots\subseteq W_d=W(V)$ by $E$-$B$-pairs $W_i$ such that $W_i/W_{i-1}$ are rank one $E$-$B$-pairs 
for any $i$.  We call $T$ a triangulation of $V$. Rank one $E$-$B$-pairs can be classified  by 
the set of continuous homomorphisms $\delta:K^{\times}\rightarrow E^{\times}$ (\cite{Co08}, \cite{Na09}). 
For a continuous homomorphism $\delta:K^{\times}\rightarrow E^{\times}$, we denote by $W(\delta)$ the rank one 
$E$-$B$-pair associated to  $\delta$. By the definition of $T$, there exists a set $\{\delta_i\}_{i=1}^d$ where 
$\delta_i:K^{\times}\rightarrow E^{\times}$ 
such that $W_i/W_{i-1}\isom W(\delta_i)$ for each $i$, which we call the parameter of $T$.
Therefore, to construct a $p$-adic 
family of trianguline representations, we first need to construct a universal $p$-adic family of continuous homomorphisms 
$\delta:K^{\times}\rightarrow E^{\times}$. 
Let $\mathcal{T}$ and $\mathcal{W}$ be the rigid analytic spaces over $E$ which represent the functors defined by
$\mathcal{T}(A):=\{\delta:K^{\times}\rightarrow A^{\times}$ continuous homomorphism $\}$ and 
$\mathcal{W}(A):=\{\delta:\mathcal{O}_{K}^{\times}\rightarrow A^{\times}$continuous homomorphism $\}$ for each $E$-affinoid $A$. 
If we fix a uniformizer $\pi_K$ of $K$, we have an isomorphism $\mathcal{T}\isom \mathcal{W}\times_E \mathbb{G}_{m/E}^{\mathrm{an}}
:\delta\mapsto ( \delta|_{\mathcal{O}_K^{\times}},\delta(\pi_K))$. For any $\delta\in \mathcal{W}(A)$ (which is known to be automatically $\mathbb{Q}_p$-analytic), we denote $k(\delta)_{\sigma}:=\frac{\partial\delta(x)}{\partial\sigma(x)}|_{x=1}\in A$ for any embedding $\sigma:K\hookrightarrow E$, which we call the $\sigma$-part of Hodge-Tate weight of $\delta$. For $\delta\in \mathcal{W}(A)$, 
define 
$\widetilde{\delta}:G_K^{\mathrm{ab}}\rightarrow A^{\times}$ the character such that 
$\widetilde{\delta}\circ\mathrm{rec}_K|_{\mathcal{O}_K^{\times}}=\delta$ and $\widetilde{\delta}(\mathrm{rec}_K(\pi_K))=1$, where $\mathrm{rec}_K:K^{\times}\hookrightarrow G_K^{\mathrm{ab}}$ is the reciprocity map of local class field theory. For $\delta\in \mathcal{T}(A)$, we also define $\widetilde{\delta}:=\widetilde{(\delta|_{\mathcal{O}_K^{\times}})}$.

In \cite{Na10}, we modified and generalized Kisin's finite slope subspace $X_{fs}\subseteq\mathcal{X}_{\overline{V}}\times_E\mathbb{G}_{m/E}^{\mathrm{an}}$ for any $K$, i.e. twisting by a universal character on $\mathcal{W}$, we constructed  $X_{fs}$ as a Zariski closed subspace of $\mathcal{X}_{\overline{V}}\times_E\mathcal{T}$ instead of $\mathcal{X}_{\overline{V}}\times_E \mathbb{G}_{m/E}^{\mathrm{an}}$. In this article, we generalize the construction of \cite{Na10} for any $d$, we construct $X_{fs}$ which we denote by $\mathcal{E}_{\overline{V}}$, as a Zariski closed subspace of 
$\mathcal{Z}:=\mathcal{X}_{\overline{V}}\times_E\mathcal{T}^{\times (d-1)}$. Let $V$ be a split trianguline $E'$-representation with 
a triangulation $T$ whose parameter is $\{\delta_i\}_{i=1}^d$ such that $[V]\in \mathcal{X}_{\overline{V}}(E')$ for a finite extension $E'$ of $E$. From such a pair $(V,T)$, we define an $E'$-rational 
point $z_{(V,T)}:=([V],\delta_1,\delta_2,\cdots,\delta_{d-1})\in\mathcal{Z}(E')$. For the point 
$z_{(V,T)}$, we define $\delta_d:=(\mathrm{det}(V)\circ \mathrm{rec}_K)\cdot\prod_{i=1}^{d-1}\delta_i^{-1}$. For any $n\in\mathbb{Z}_{\geqq 1}$, set 
$[n]:=\{1,2,\cdots,n\}$. For any 
subset $I\subseteq [d]$, set $\delta_{I}:=\prod_{i\in I}\delta_i$.

The space $\mathcal{E}_{\overline{V}}$ should be a suitable approximation of the subset 
of $\mathcal{Z}$ consisting of all the points of the form $z_{(V, T)}$. Hence, the space $\mathcal{E}_{\overline{V}}$ should be contained in the closed subspace $\mathcal{Z}_0$ 
of $\mathcal{Z}$ consisting of the points $([V],\delta_1,\cdots,\delta_{d-1})$ such that the Hodge-Tate 
weights of $V$ are compatible with those of $\{\delta_i\}_{i=1}^d$, more precisely, the 
$\sigma$-part of Hodge-Tate wights of $V$ is equal to $\{k(\delta_1)_{\sigma},\cdots, 
k(\delta_{d-1})_{\sigma}, k(\delta_d)_{\sigma}\}$ for any $\sigma:K\hookrightarrow E$. The basic idea of the construction of $\mathcal{E}_{\overline{V}}$ as the closed subspace of $\mathcal{Z}_0$ is to generalize Kisin's construction of $X_{fs}$ by using a technique of Bella\"iche-Chenevier using exterior products in the study of 
trianguline deformations (\cite{BeCh09}, see proposition 
\ref{1.8}). If $V$ is a split trianguline $E$ representation as above. 
Then, we can show that (*)``$\bold{D}^+_{\mathrm{cris}}((\wedge^iV)(\widetilde{\delta}_{[i]}^{-1}))^{\varphi^f=\delta_{[i]}(\pi_K)}$ is
non-zero for any $1\leqq i\leqq d-1$". We construct $\mathcal{E}_{\overline{V}}$ as a subspace of $\mathcal{Z}_0$ roughly parametrizing the points 
$z=(V,\delta_1,\cdots,\delta_{d-1})$ with this property (*). 

The key main theorem of this article is the following. For the precise statements and definitions, see Corollary \ref{1.3}, Proposition \ref{1.4} and Theorem \ref{1.7}. 
\begin{thm}\label{0.2}
There exists a Zariski closed subspace $\mathcal{E}_{\overline{V}}$ of $\mathcal{Z}$ satisfying the following properties (0), (1), (2).
\begin{itemize}
\item[(0)]For any point $z=([V],\delta_1,\cdots,\delta_{d-1})\in \mathcal{E}_{\overline{V}}(E')$, the $\sigma$-part of Sen's polynomial 
of $V$ is equal to $$\prod_{i=1}^d\bigl(T-k(\delta_i)_{\sigma}\bigr) \in E'[T]$$ 
for each embedding $\sigma:K\hookrightarrow E$.

\item[(1)]
If a pair $(V,T)$ as above satisfies the following conditions $\mathrm{(i)}$, $\mathrm{(ii)}$, $\mathrm{(iii)}$,

\begin{itemize}
\item[(i)]$\mathrm{End}_{E'[G_K]}(V)=E'$,
\item[(ii)]$\delta_{j}/\delta_i\not=\prod_{\sigma\in \mathcal{P}}\sigma^{k_{\sigma}}$ for any $i<j$ and 
$\{k_{\sigma}\}_{\sigma\in \mathcal{P}}\in \prod_{\sigma\in \mathcal{P}}\mathbb{Z}_{\leqq 0}$, 
\item[(iii)]$\delta_i/\delta_j\not=|N_{K/\mathbb{Q}_p}|_p\prod_{\sigma\in \mathcal{P}}\sigma^{k_{\sigma}}$ for $i<j$ and 
$\{k_{\sigma}\}_{\sigma\in \mathcal{P}}\in \prod_{\sigma\in \mathcal{P}}\mathbb{Z}_{\geqq 1}$,
\end{itemize}
then the point $z_{(V,T)}\in\mathcal{Z}$ defined above is contained in $\mathcal{E}_{\overline{V}}$. 
\item[(2)]If the point $z_{(V,T)}\in \mathcal{E}_{\overline{V}}$ defined in $(1)$ satisfies one of the following additional conditions  $\mathrm{(iv)}$,  $\mathrm{(v)}$, 
\begin{itemize}
\item[(iv)]$V$ is potentially crystalline and, for any $1\leqq i\leqq d-1$, $$\{a\in \bold{D}_{\mathrm{cris}}((\wedge^i V)(\widetilde{\delta}_{[i]}^{-1}))| 
\exists n\geqq 1\text{ such that }  (\varphi^f-\delta_{[i]}(\pi_K))^na=0\}$$ is a rank one 
free $K_0\otimes_{\mathbb{Q}_p}E$-module,
\item[(v)]for any $1\leqq i\leqq d-1$ and for any subset $I(\not=[i])\subseteq [d]$ with its 
cardinality equal to $i$, 
we have  
$$k(\delta_{I})_{\sigma} -k(\delta_{[i]})_{\sigma}\not\in \mathbb{Z}_{\leqq 0}$$ for any $\sigma\in \mathcal{P}$, 

\end{itemize}
then 
$\mathcal{E}_{\overline{V}}$ is smooth at $z_{(V,T)}$ of its dimension $[K:\mathbb{Q}_p]\frac{d(d+1)}{2}+1$.
\end{itemize}

\end{thm}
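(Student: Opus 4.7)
The plan is to construct $\mathcal{E}_{\overline{V}}$ by iterating the generalized finite slope subspace construction along exterior powers, and then to verify properties (0), (1), (2) in turn.

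First I would define $\mathcal{Z}_0 \subset \mathcal{Z}$ as the Zariski closed subspace cut out by demanding that the $\sigma$-part of Sen's polynomial of the universal deformation $\mathcal{V}$ equals $\prod_{i=1}^{d}(T - k(\delta_i)_\sigma)$ for every embedding $\sigma: K \hookrightarrow E$, where $\delta_d = (\det(\mathcal{V}) \circ \mathrm{rec}_K) \cdot \prod_{i=1}^{d-1}\delta_i^{-1}$. Over $\mathcal{Z}_0$, the twist $\wedge^i \mathcal{V} \otimes \widetilde{\delta}_{[i]}^{-1}$ has Hodge--Tate weights compatible with a possible $\varphi^f$-eigenvector at eigenvalue $\delta_{[i]}(\pi_K)$. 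Applying the generalization of Kisin's finite slope subspace construction built in this paper (the version living inside $\mathcal{X}_{\overline{V}} \times_E \mathcal{T}$ rather than $\mathcal{X}_{\overline{V}} \times_E \mathbb{G}_{m/E}^{\mathrm{an}}$, cf.\ Proposition \ref{1.4}) to each of the families $\wedge^i \mathcal{V} \otimes \widetilde{\delta}_{[i]}^{-1}$ yields Zariski closed subspaces $\mathcal{Z}_i \subset \mathcal{Z}_0$ whose points satisfy the non-vanishing condition $\bold{D}^+_{\mathrm{cris}}(\wedge^i V \otimes \widetilde{\delta}_{[i]}^{-1})^{\varphi^f = \delta_{[i]}(\pi_K)} \neq 0$. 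I then set $\mathcal{E}_{\overline{V}} := \bigcap_{i=1}^{d-1} \mathcal{Z}_i$; property (0) is automatic since $\mathcal{E}_{\overline{V}} \subset \mathcal{Z}_0$.

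For property (1), given a split trianguline $(V, T)$ with triangulation $W_1 \subsetneq \cdots \subsetneq W_d = W(V)$ and parameter $\{\delta_i\}$, the wedge $\wedge^i W_i$ is a rank one sub-$B$-pair of $\wedge^i W(V)$ isomorphic to $W(\delta_{[i]})$. Taking its associated crystalline vector after twisting by $\widetilde{\delta}_{[i]}^{-1}$ gives the required non-zero element in $\bold{D}^+_{\mathrm{cris}}(\wedge^i V \otimes \widetilde{\delta}_{[i]}^{-1})^{\varphi^f = \delta_{[i]}(\pi_K)}$, which is the property (*) of the overview. This shows $z_{(V,T)} \in \mathcal{Z}_i$ for every $i$. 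Conditions (i)--(iii) are the standard non-criticality hypotheses of \cite{BeCh09}, \cite{Na10} that rule out exceptional $\mathrm{Ext}^1$-classes of rank one $B$-pairs coming from characters of Hodge--Tate or de Rham type with ``wrong'' sign, so that $z_{(V,T)}$ lies on the component of each finite slope subspace obtained from the construction rather than being an accidental extra point.

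The most substantive step is property (2). The target dimension $[K:\mathbb{Q}_p]\frac{d(d+1)}{2}+1$ is precisely the dimension computed in \cite{BeCh09}, \cite{Ch11}, \cite{Na10} for the trianguline deformation functor of $(V, T)$ under non-criticality. The strategy is a two-sided bound on the tangent space $T_{z_{(V,T)}} \mathcal{E}_{\overline{V}}$. For the lower bound I would, given any first-order trianguline deformation of $(V, T)$ to $E'[\epsilon]/\epsilon^2$, take exterior powers, verify that the $\varphi^f$-eigenvector above lifts, and thereby produce a tangent vector to $\mathcal{E}_{\overline{V}}$; this gives a map from the trianguline tangent space of dimension $[K:\mathbb{Q}_p]\frac{d(d+1)}{2}+1$ into $T_{z_{(V,T)}} \mathcal{E}_{\overline{V}}$, which is injective by an $\mathrm{End}_{G_K}(V) = E'$ argument using (i). For the upper bound, one argues conversely that every tangent vector of $\mathcal{E}_{\overline{V}}$ at $z_{(V,T)}$ arises from a trianguline deformation: under hypothesis (iv), the rank one $K_0 \otimes_{\mathbb{Q}_p} E$-freeness of the generalized $\varphi^f$-eigenspace pins down each deformed $\wedge^i W_i$ uniquely as a sub-$B$-pair, hence by Plücker-style reconstruction (the exterior product idea of Bella\"iche--Chenevier) recovers the entire triangulation; under (v), the Hodge--Tate weight inequalities force any alternative rank $i$ sub-$B$-pair producing the required $\varphi^f$-eigenvector to lie in the predicted Harder--Narasimhan slot, again recovering the triangulation. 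The main obstacle will be this upper bound: proving that an infinitesimal deformation satisfying only the non-vanishing conditions on the exterior powers must in fact lift the triangulation requires a careful analysis of $\mathrm{Ext}^1$ groups of rank one $B$-pairs over $E'[\epsilon]/\epsilon^2$, and it is precisely to kill the unwanted extension classes that conditions (iv), (v) are imposed.
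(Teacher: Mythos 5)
Your overall strategy — build $\mathcal{E}_{\overline{V}}$ inside $\mathcal{Z}_0$ using the finite slope construction applied to the exterior powers $\wedge^i N\otimes\widetilde{\delta}_{[i]}^{-1}$, then tie the local geometry at $z_{(V,T)}$ to the trianguline deformation ring — matches the paper's high-level plan. However, there are concrete gaps in how you propose to verify (1) and (2), and a smaller issue in the construction itself.

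On the construction: the paper applies Theorem \ref{1.1} to the entire tuple $\{(N_i,Y_{[i]})\}_{1\leqq i\leqq d-1}$ at once, producing one $\mathcal{Z}_{0,fs}$, rather than building $\mathcal{Z}_i$ separately and intersecting. The characterizing property of $X_{fs}$ involves a \emph{simultaneous} scheme-theoretic density requirement (condition (1) of Theorem \ref{1.1} must hold for all $i,\sigma,j$ on the same $X_{fs}$). Taking $\bigcap_i\mathcal{Z}_i$ could damage these density statements: a locus that is dense in $\mathcal{Z}_i$ need not remain dense after intersecting with the other $\mathcal{Z}_j$. You would at least need to argue that the intersection still satisfies the uniqueness characterization, and this is not automatic.

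The gap in (1): membership in $X_{fs}$ is not a pointwise-checkable condition. The defining property (ii) of Theorem \ref{1.1} is quantified over \emph{$Y_i$-small morphisms} $f\colon\mathrm{Spm}(R)\to X$ factoring through all the $X_{Q_{i,\sigma}(j)}$ — a single $E'$-point does not meet the factoring hypotheses and cannot directly test the property. Merely observing that $\bold{D}^+_{\mathrm{cris}}((\wedge^iV)(\widetilde{\delta}_{[i]}^{-1}))^{\varphi^f=\delta_{[i]}(\pi_K)}\neq0$ (the property $(*)$), as you do, is necessary but not sufficient. The paper instead constructs the Artinian thickenings $\mathrm{Spm}(R_z/\mathfrak{m}^n)\to\mathcal{Z}_0$ with $R_z\cong R_{V_x,T_x}$, and proves these factor through $\mathcal{E}_{\overline{V}}$ using the formal smoothness of $R_{V_x,T_x}$ (so $R_z$ is a domain), the non-vanishing of $Q_{i,\sigma}(j)$ on $R_z$ (Lemma \ref{1.6}, proved via formal smoothness of a map of deformation functors $f_J\colon D_{V_x,T_x}\to D_{\delta_{J,0}}$), and a surjectivity-of-limits statement (Lemma \ref{1.5}(2)). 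This is the actual content behind conditions (i)--(iii); ``ruling out exceptional $\mathrm{Ext}^1$-classes'' does not capture it.

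The gap in (2): your tangent-space-bound strategy elides the central technical difficulty. To show every deformation coming from $\mathcal{E}_{\overline{V}}$ is trianguline via Plücker reconstruction (Proposition \ref{1.8}), you first need the lifted crystalline periods: you need $\bold{D}^+_{\mathrm{cris}}(N_i\otimes A)^{\varphi^f=Y_{[i]}}$ to be free of rank one over $K_0\otimes A$ with the right filtration compatibility, for $A$ an Artin thickening coming from $\mathcal{E}_{\overline{V}}$. This is exactly what is \emph{not} automatic: the ideal $H=\prod H_{i,\sigma}$ controlling these periods need not be principal in $\hat{\mathcal{O}}_{\mathcal{E}_{\overline{V}},z}$. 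The paper resolves this by blowing up along $H$, working over the complete local rings $\hat{\mathcal{O}}_{\widetilde{T},\widetilde{z}}$ where $H$ \emph{is} principal, producing the trianguline deformation there, and then descending through the injection $\hat{\mathcal{O}}_{\mathcal{E}_{\overline{V}},z}\hookrightarrow\prod_{\widetilde{z}}\hat{\mathcal{O}}_{\widetilde{T},\widetilde{z}}$ (which itself relies on Proposition \ref{1.2}(2)). Without this blow-up step the ``upper bound'' part of your argument has no foothold: you cannot start the Bellaïche--Chenevier reconstruction because the requisite $\varphi^f$-eigenvectors over the thickening may simply not exist.
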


For eigenvarieties, the classicality theorem concerning the conditions for 
overconvergent modular forms to be classical is very important, in particular, which enables us 
to show that 
the set of classical points is Zariski dense in eigenvarieties. As a 
local analogue of this property, we prove the following theorem (Theorem \ref{1.13}).

\begin{thm}\label{0.3}
Let $(V,T)$ be a pair satisfying all the conditions in (1) and the condition (iv) or (v) in (2) of Theorem \ref{0.2}, and 
let $U$ be an admissible open neighborhood of $z:=z_{(V,T)}$ in $\mathcal{E}_{\overline{V}}$. 
Then there exists an admissible open neighborhood $U'$
of $z$ in $U$ in which the subset consisting of the points $z'=([V'],\delta'_1,\cdots,\delta'_{d-1})\in U'$ such that $V'$ is crystalline with distinct Hodge-Tate weights are Zariski dense in $U'$. 

\end{thm}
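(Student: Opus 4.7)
The strategy follows the pattern of \cite{Co08}, \cite{Ki10}, \cite{Na10}, \cite{Ch13}: exploit the smoothness of $\mathcal{E}_{\overline{V}}$ at $z$ granted by Theorem \ref{0.2}(2), transport the Zariski density of algebraic characters in the weight space back to $\mathcal{E}_{\overline{V}}$, and finally invoke a classicality theorem for trianguline $B$-pairs to conclude crystallinity pointwise.

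Concretely, after shrinking $U$ one may assume that $U'$ is an admissible open affinoid on which $\mathcal{E}_{\overline{V}}$ is smooth of dimension $N:=[K:\mathbb{Q}_p]d(d+1)/2+1$. I would then consider the weight morphism
\[
\omega \colon \mathcal{E}_{\overline{V}} \longrightarrow \mathcal{W}^{d-1}, \qquad ([V'],\delta'_1,\ldots,\delta'_{d-1}) \longmapsto (\delta'_i|_{\mathcal{O}_K^\times})_{i=1}^{d-1}.
\]
Using the cohomological description of $T_z\mathcal{E}_{\overline{V}}$ as the subspace of trianguline deformations built from $B$-pairs (developed in \S2--3 along the lines of \cite{BeCh09}, \cite{Na10}) together with hypotheses (i)--(iii) and (iv) or (v), one verifies that $d\omega_z$ surjects onto $T_{\omega(z)}\mathcal{W}^{d-1}$; this is precisely the geometric content underlying the dimension count of Theorem \ref{0.2}(2), and thanks to the exterior-product technique of Bella\"iche-Chenevier it goes through for all $d$ and all $K$. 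After a further shrinking of $U'$, $\omega|_{U'}$ becomes a smooth morphism of rigid analytic spaces.

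Let $L_0 \subset \mathcal{W}^{d-1}$ be the locus of tuples $(\eta_i)$ of the form $\eta_i = \prod_{\sigma\in \mathcal{P}} \sigma^{k_{i,\sigma}}$ for integers $k_{i,\sigma}$ satisfying the strict inequalities $k_{1,\sigma} < k_{2,\sigma} < \cdots < k_{d,\sigma}$ for every $\sigma \in \mathcal{P}$; here $k_{d,\sigma}$ is forced by $\delta'_d = (\det V' \circ \mathrm{rec}_K)\prod_{i=1}^{d-1}(\delta'_i)^{-1}$, together with the fact that the $\sigma$-parts of both the Sen polynomial and the determinant character are locally constant on $\mathcal{E}_{\overline{V}}$ thanks to part (0) of Theorem \ref{0.2}. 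Integer weights are Zariski dense in $\mathcal{W}$, so $L_0$ meets every admissible open of $\mathcal{W}^{d-1}$ in a Zariski dense subset, and by smoothness of $\omega|_{U'}$ the preimage $L := (\omega|_{U'})^{-1}(L_0)$ is Zariski dense in $U'$.

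It then remains to show that every $z' \in L$ corresponds to a crystalline $V'$ with distinct Hodge-Tate weights. By construction each $\delta'_i$ is a locally algebraic character with crystalline components, and the strict inequalities $k_{1,\sigma} < \cdots < k_{d,\sigma}$ imply all the non-criticality hypotheses (ii), (iii), (v); moreover the defining non-vanishing condition $\mathbf{D}^+_{\mathrm{cris}}((\wedge^i V')(\widetilde{\delta'}_{[i]}^{-1}))^{\varphi^f = \delta'_{[i]}(\pi_K)} \neq 0$ for $1 \leqq i \leqq d-1$ holds on all of $\mathcal{E}_{\overline{V}}$ by construction. Feeding these into the classicality theorem for trianguline $B$-pairs, i.e.\ the generalization to arbitrary $K$ of \cite[Thm.\,V.2.13]{BeCh09} carried out in the $B$-pair formalism in \cite{Na09}, produces a triangulation of $W(V')$ by crystalline rank-one $B$-pairs, whence $V'$ itself is crystalline with distinct Hodge-Tate weights $\{k_{i,\sigma}\}$. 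The main technical obstacle in the whole argument is the surjectivity of $d\omega_z$ under (iv) or (v): this requires identifying $T_z\mathcal{E}_{\overline{V}}$ precisely as the subspace of the trianguline $H^1$ cut out by the non-vanishing conditions across all exterior powers, together with a careful Bloch-Kato-type analysis of $H^1$ of rank-one $B$-pairs, where the exterior-product method of \cite{BeCh09} is indispensable.
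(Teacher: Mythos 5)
Your overall architecture matches the paper's: you use the smoothness of the weight morphism $f\colon \mathcal{E}_{\overline{V}}\to\mathcal{W}^{\times d}$ at $z$ (Proposition \ref{1.9}), transport Zariski density of algebraic weights along the smooth fibration, and then try to conclude that the points over algebraic weights are crystalline. But there are two real gaps.

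First, you define $L_0$ as the locus of integer weights satisfying only \emph{strict} inequalities $k_{1,\sigma}<\cdots<k_{d,\sigma}$. This is not sufficient. The paper's locus $\mathcal{W}^{\times d}_k$ imposes a quantitative gap $k'_{i,\sigma}-k'_{i+1,\sigma}>k$ where $k$ is chosen large compared to the (locally constant) valuations $v_i=v_p(\delta'_i(\pi_K))$ on the $Y_i$-small affinoid $U$, specifically $k>\frac{\max\{2,(d-1)^2\}}{f}\max_i|v_i|$. This is exactly the local analogue of the ``small slope'' hypothesis in the classicality theorem for overconvergent forms, and it is indispensable: the slope-filtration argument in Proposition \ref{1.11}(2)(ii) and the slope estimate ruling out exceptional $|N_{K/\mathbb{Q}_p}|_p$-twists in Lemma \ref{1.12}(2) both fail if the weight gaps are merely positive. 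Note also that at an algebraic weight the non-integrality alternative (i) of Proposition \ref{1.11}(2) never applies, so one cannot escape (ii).

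Second, you invoke a pre-existing ``classicality theorem for trianguline $B$-pairs, i.e.\ the generalization to arbitrary $K$ \ldots carried out in \cite{Na09}.'' No such theorem exists prior to this paper for arbitrary $K$ (and in \cite{BeCh09} the relevant statements are restricted to $K=\mathbb{Q}_p$). Establishing it here is precisely the content of Propositions \ref{1.10} and \ref{1.11} and Lemma \ref{1.12}: one first produces, from the non-vanishing of $\bold{D}^+_{\mathrm{cris}}$ on each exterior power, saturated maps $W(\delta'_{[i]})\hookrightarrow W(\wedge^iV')$; then the exterior-power d\'evissage plus Kedlaya's slope estimates (which require the large weight gap above) yield a full triangulation of $W(V')$ (Proposition \ref{1.11}); finally Lemma \ref{1.12} upgrades split trianguline with those weights to crystalline via a Bloch--Kato dimension count. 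Treating this as citable background rather than as part of what must be proved leaves the core of Theorem \ref{0.3} unestablished.

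A smaller point: the target of the weight map should be $\mathcal{W}^{\times d}$ with $\delta_d$ included (as the paper does), since the quantitative gap condition is imposed on consecutive pairs including $(\delta_{d-1},\delta_d)$; dropping the last factor obscures this. And your inequalities are oriented oppositely to the paper's normalization ($k_{1,\sigma}>k_{2,\sigma}>\cdots$), which matters when one cross-references conditions (ii), (iii), (v).
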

For the proof of Theorem \ref{0.3}, we need to prove that, oppositely, if a point 
$z=([V],\delta_1,\cdots,\delta_{d-1})\in \mathcal{E}_{\overline{V}}$ satisfying the condition (*) above and some conditions on 
$\{\delta_i\}_{i=1}^d$, then $V$ is split trianguline and crystalline. Concerning this problem, we prove some propositions (see Proposition \ref{1.11}, Proposition \ref{1.12}) using the slope filtration theorem of Kedlaya.

The main theorem Theorem \ref{0.1} follows from these two theorems Theorem \ref{0.2}, Theorem \ref{0.3}, and from the deformation theory of trianguline representations, 
in particular, the deformation theory of generic or benign crystalline representations developed 
in \cite{Ch11} and \cite{Na10}. 

\subsection*{Acknowledgement.}
A part of this article was written during a stay at \'Ecole Polytechnique. 
The author would like to thank Ga\"etan Chenevier for many valuable discussions and for his kind hospitality, and thanks also all the people there for their kind hospitality. The author would like to thank Kenichi Bannai for constantly encouraging the author, and would like to thank Go Yamashita and Seidai Yasuda for many valuable discussions and pointing out some mistakes in the previous version of this article.
The author would like to thank all the people at Keio University related to the JSPS ITP program which enabled my stay  at 
\'Ecole Polytechnique. This work is also supported in part by the Grant-in-aid (NO. S-23224001) for Scientific Research, JSPS.

\subsection*{Notation.}
Let $p$ be a prime number. Let $K$ be a finite extension of $\mathbb{Q}_p$, $\overline{K}$ 
the algebraic closure of $K$, $K_0$ the maximal unramified extension of $\mathbb{Q}_p$ in $K$.
Let $G_K:=\mathrm{Gal}(\overline{K}/K)$ be the 
absolute Galois group of $K$ equipped with pro-finite topology. Let $\mathcal{O}_K$ be the ring of integers of $K$, $\pi_K\in\mathcal{O}_K$ 
a fixed uniformizer of $K$, $k:=\mathcal{O}_K/\pi_K\mathcal{O}_K$ the residue field 
of $K$ with cardinality $q=p^f$. Let $\chi_p:G_K\rightarrow \mathbb{Z}_p^{\times}$ be the $p$-adic cyclotomic character 
(i.e. $g(\zeta_{p^n})=\zeta_{p^n}^{\chi(g)}$ for any $p^n$-th roots of unity and for any $g\in G_K$). 
Let $E$ be a finite extension of $\mathbb{Q}_p$ such 
that $\mathcal{P}:=\mathrm{Hom}_{\mathbb{Q}_p-alg}(K, \overline{E})=\mathrm{Hom}_{\mathbb{Q}_p-alg}(K,E)$. 
In this paper, we use the notation $E$ for the coefficient field of representations. 
Let $|-|_p:E\rightarrow \mathbb{Q}_{\geqq 0}$ be the norm such that $|p|_p:=\frac{1}{p}$. 
Let $N_{K/\mathbb{Q}_p}:K^{\times}\rightarrow \mathbb{Q}_p^{\times}$ be the norm. 
Let denote the composition by $|N_{K/\mathbb{Q}_p}|_p:K^{\times}\xrightarrow{N_{K/\mathbb{Q}_p}}\mathbb{Q}_p^{\times}\xrightarrow{|-|_p}\mathbb{Q}^{\times}\hookrightarrow E^{\times}$, where the last inclusion is the canonical one.
Let $\chi_{\mathrm{LT}}:G_K\rightarrow \mathcal{O}_K^{\times}$ be 
the Lubin-Tate character associated to the fixed uniformizer $\pi_K$. Let $\mathrm{rec}_K:K^{\times}
\rightarrow G_K^{\mathrm{ab}}$ be the reciprocity map of local class field theory such that 
$\mathrm{rec}_K(\pi_K)$ is a lifting of the inverse of $q$-th power Frobenius on $k$, then we have 
$\chi_{\mathrm{LT}}(\mathrm{rec}_K(\pi_K))=1$ and $\chi_{\mathrm{LT}}\circ\mathrm{rec}_K|_{\mathcal{O}_K^{\times}}
=\mathrm{id}_{\mathcal{O}_K^{\times}}$. For any topological ring $A$,
we say that $V_A$ is an $A$-representation of $G_K$ 
if $V_A$ is a finite free $A$-module with a continuous $A$-linear $G_K$-action.

\section{Review of $B$-pairs and trianguline representations}

In this section, we recall the definition of $B$-pairs and trianguline representations and 
some of their fundamental properties which we will use in later sections, see \cite{Be08} or \cite{Na09}, \cite{Na10} for more details.

\subsection{review of trianguline $A$-$B$-pairs}

Let $\bold{B}_{\mathrm{cris}}$, $\bold{B}^+_{\mathrm{dR}}$ and $\bold{B}_{\mathrm{dR}}$ be the Fontaine's rings of $p$-adic periods (\cite{Fo94}). 
Let $\bold{B}_e:=\bold{B}_{\mathrm{cris}}^{\varphi=1}$ the $\varphi$-fixed part of $\bold{B}_{\mathrm{cris}}$. These rings are naturally equipped with 
continuous $G_K$-actions. Let $t=\mathrm{log}[\varepsilon]\in \bold{B}^{\varphi=p}_{\mathrm{cris}}\cap \mathrm{Fil}^1\bold{B}^+_{\mathrm{dR}}$ be a period of the inverse of the $p$-adic cyclotomic character $\chi_p$.

Let $\mathcal{C}_E$ be the category of Artin local $E$-algebras $A$ such that 
$E\isom A/\mathfrak{m}_A$ where $\mathfrak{m}_A$ is the maximal ideal of $A$. 
For $A\in \mathcal{C}_E$, we recall the definition of $A$-$B$-pair which is the $A$-coefficient version of $B$-pair (see Definition 2.10 and Lemma 2.11 of \cite{Na10}). 

\begin{defn}
We say that a pair $W:=(W_e,W^+_{\mathrm{dR}})$ is an $A$-$B$-pair (of $G_K$) if 
\begin{itemize}
\item[(1)] $W_e$ is a finite free $\bold{B}_e\otimes_{\mathbb{Q}_p}A$-module with a continuous semi-linear $G_K$-action, where ``semi-linear" means that we have $g((a\otimes b)x)=
(g(a)\otimes b)g(x)$ for any $g\in G_K$, $a\in \bold{B}_e$, $b\in A$, $x\in W_e$.
\item[(2)]$W^+_{\mathrm{dR}}$ is a $G_K$-stable finite free sub $\bold{B}^+_{\mathrm{dR}}\otimes_{\mathbb{Q}_p}A$-module of 
$W_{\mathrm{dR}}:=\bold{B}_{\mathrm{dR}}\otimes_{\bold{B}_e}W_e$ which generates $W_{\mathrm{dR}}$ as a $\bold{B}_{\mathrm{dR}}$-module.
\end{itemize}
We define the rank of $W$ as the rank of $W_e$ over $\bold{B}_e\otimes_{\mathbb{Q}_p}A$.
We just call an $A$-$B$-pair if there is no risk of confusing about $K$.
\end{defn}
\begin{rem}
The functor $V_A\mapsto W(V_A):=(\bold{B}_e\otimes_{\mathbb{Q}_p}V_A, \bold{B}^+_{\mathrm{dR}}\otimes_{\mathbb{Q}_p}V_A)$ 
from the category of $A$-representations of $G_K$ to the category of $A$-$B$-pairs is exact and fully faithful.

\end{rem}

\begin{prop}\label{4.1}
There exists a canonical bijection $\delta\mapsto W(\delta)$ between the set of continuous 
homomorphisms $\delta:K^{\times}\rightarrow A^{\times}$ and the set of isomorphism classes of rank one $A$-$B$-pairs.
\end{prop}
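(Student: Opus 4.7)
The plan is to construct the map $\delta \mapsto W(\delta)$ explicitly, then prove bijectivity by induction on $\dim_E A$, reducing to the known case $A = E$ which is proved in \cite{Co08} for $K = \mathbb{Q}_p$ and in \cite{Na09} for general $K$.

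For the construction, given $\delta : K^\times \to A^\times$, I would split $\delta = \delta_{\mathrm{ram}} \cdot \delta_{\mathrm{nr}}$ with $\delta_{\mathrm{ram}}(\pi_K) = 1$ and $\delta_{\mathrm{nr}}|_{\mathcal{O}_K^\times} = 1$. The reciprocity map converts $\delta_{\mathrm{ram}}$ into a Galois character $\widetilde{\delta}_{\mathrm{ram}} : G_K^{\mathrm{ab}} \to A^\times$, yielding a rank one $A$-representation and hence a rank one $A$-$B$-pair via the fully faithful functor $V_A \mapsto W(V_A)$ recalled above. For the unramified-type factor $\delta_{\mathrm{nr}}$ with $\lambda := \delta(\pi_K) \in A^\times$, I would take the crystalline-type rank one $A$-$B$-pair $W(\delta_{\mathrm{nr}})$ whose filtered $\varphi$-module over $K_0 \otimes_{\mathbb{Q}_p} A$ has trivial filtration and $q$-Frobenius eigenvalue $\lambda$. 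Then set $W(\delta) := W(\widetilde{\delta}_{\mathrm{ram}}) \otimes_A W(\delta_{\mathrm{nr}})$.

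For bijectivity I would use a small surjection $A \twoheadrightarrow A'$ with kernel $I \cong E$ satisfying $\mathfrak{m}_A I = 0$. Given a rank one $A$-$B$-pair $W$, the reduction $W \otimes_A A'$ corresponds by the inductive hypothesis to some $\delta' : K^\times \to A'^\times$. Isomorphism classes of rank one $A$-$B$-pairs lifting $W \otimes_A A'$ form a torsor under $H^1(G_K, W_{\mathrm{triv}}) \otimes_E I$, where $W_{\mathrm{triv}}$ is the trivial rank one $E$-$B$-pair, while lifts of $\delta'$ to continuous characters $K^\times \to A^\times$ form a torsor under $\mathrm{Hom}_{\mathrm{cts}}(K^\times, I)$. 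The construction $\delta \mapsto W(\delta)$ induces a map between these torsors, so bijectivity reduces to the cohomological identification
\[
H^1(G_K, W_{\mathrm{triv}}) \;\cong\; \mathrm{Hom}_{\mathrm{cts}}(K^\times, E),
\]
both sides being of dimension $[K:\mathbb{Q}_p] + 1$ by Tate duality and the Euler characteristic formula.

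The main obstacle will be verifying that the map induced by our explicit construction is an actual \emph{isomorphism}, not merely a map between spaces of matching dimension: this requires carefully tracing through the normalizations of the reciprocity map, the Lie-algebra derivative $k(\delta)_\sigma$, and the crystalline realization of the parameter $\lambda$, so as to match infinitesimal characters with infinitesimal deformations of the $B$-pair. Once this tangent-level comparison is established, induction on $\dim_E A$ delivers the bijection for all $A \in \mathcal{C}_E$.
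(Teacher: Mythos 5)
The explicit construction you propose—splitting $\delta$ into a factor $\delta_{\mathrm{ram}}$ with $\delta_{\mathrm{ram}}(\pi_K)=1$, converted by $\mathrm{rec}_K$ into a Galois character and hence an \'etale rank-one $B$-pair, tensored with the crystalline rank-one $B$-pair of trivial filtration and $\varphi^f$-eigenvalue $\lambda=\delta(\pi_K)$—is exactly the construction the paper uses: the two remarks immediately following this proposition record precisely those two compatibilities (the reciprocity twist, and the filtered $\varphi$-module $D_{\lambda}$). The paper itself defers the entire proof to Proposition 2.16 of \cite{Na10}, so there is no argument in the text to compare against, but your map $\delta\mapsto W(\delta)$ is certainly the intended one.

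Your deformation-theoretic route to bijectivity is a legitimate alternative to a direct classification, and the base case $A=E$ you invoke is indeed settled in \cite{Co08} ($K=\mathbb{Q}_p$) and \cite{Na09} (general $K$). Two remarks on the step you flag as a gap. First, you should note that $H^2(G_K,W_{\mathrm{triv}})=0$ (dual to $H^0(G_K,W_{\mathrm{triv}}(\chi_p))=0$), so lifts of a rank-one $A'$-$B$-pair to $A$ exist and form an honest torsor under $H^1(G_K,W_{\mathrm{triv}})\otimes_E I$, not merely a pseudo-torsor. Second—and this is where the gap closes more easily than you anticipate—the tangent-level comparison requires no delicate normalization-chasing involving $k(\delta)_{\sigma}$: for a one-unit $\lambda=1+c\varepsilon$ the crystalline $B$-pair $W(\delta_{\lambda})$ is \'etale and equals $W(V)$ for the unramified deformation of the trivial representation, so your whole tangent map factors as the composite $\mathrm{Hom}_{\mathrm{cts}}(K^{\times},E)\isom H^1(G_K,E)\rightarrow H^1(G_K,W_{\mathrm{triv}})$, the first arrow local class field theory and the second induced by the exact fully faithful functor $V\mapsto W(V)$. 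The second arrow is injective by full faithfulness (a $B$-pair splitting of an extension of representations descends to a splitting of representations), and the dimension count ($[K:\mathbb{Q}_p]+1$ on each side) finishes. You should also verify $W(\delta\eta)\isom W(\delta)\otimes W(\eta)$ so that your explicit map is actually equivariant for the two torsor actions; this is immediate from multiplicativity of each of the two factors in your tensor decomposition.
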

\begin{proof}
See Proposition 2.16 of \cite{Na10}

\end{proof}

By the definition of $W(\delta)$ in the previous paragraph of Proposition 2.16 of \cite{Na10}, 
it is easy to show that $W(\delta)$ satisfies the following two properties.

\begin{rem}
The bijection in Proposition \ref{4.1} is compatible with local class field theory, i.e. we have an isomorphism $W(\widetilde{\delta}\circ\mathrm{rec}_K)\isom W(A(\widetilde{\delta}))$
for any character 
$\widetilde{\delta}:G_K^{\mathrm{ab}}\rightarrow A^{\times}$.
\end{rem}

\begin{rem}
For $\lambda\in A^{\times}$, we define a continuous homomorphism 
$\delta_{\lambda}:K^{\times}\rightarrow A^{\times}$ such that $\delta_{\lambda}|_{\mathcal{O}_K^{\times}}=1$ and 
$\delta_{\lambda}(\pi_K)=\lambda$. Then, $W(\delta_{\lambda})$ is a crystalline $A$-$B$-pair corresponding to 
an $A$-filtered $\varphi$-module $D_{\lambda}:=K_0\otimes_{\mathbb{Q}_p}Ae_{\lambda}$ such that 
$\varphi^f(e_{\lambda})=\lambda e_{\lambda}$ and $\mathrm{Fil}^0(K\otimes_{K_0}D_{\lambda})=K\otimes_{K_0}D_{\lambda}$ and 
$\mathrm{Fil}^1(K\otimes_{K_0}D_{\lambda})=0$.

\end{rem}

\begin{defn}
Let $W$ be an $A$-$B$-pair of rank $d$. We say that $W$ is a split trianguline $A$-$B$-pair if there exists a
 filtration $T:0\subseteq W_1\subseteq W_2\subseteq \cdots\subseteq W_{d-1}\subseteq W_d=W$ by $A$-$B$-pairs
 such that $W_i/W_{i-1}$ is rank one $A$-$B$-pair for any $i$.
 We call $T$ an $A$-triangulation of $W$. Define the set $\{\delta_i\}_{i=1}^d$ of 
 continuous homomorphisms $\delta_i:K^{\times}\rightarrow A^{\times}$ such that
  $W_i/W_{i-1}\isom W(\delta_i)$ for any $i$, which we call the parameter of $T$.
 
 Let $V$ be an $A$-representation. We say that $V$ is a split trianguline $A$-representation if 
 $W(V)$ is a split trianguline $A$-$B$-pair.

\end{defn}

\subsection{review of deformation theory of trianguline representations}

In \cite{BeCh09}, \cite{Ch11} (for $K=\mathbb{Q}_p$) and \cite{Na10} (for general $K$), we study 
deformation theory of trianguline $B$-pairs or trianguline $(\varphi,\Gamma)$-modules over the Robba ring, which we now briefly recall (see \S 2 of \cite{Na10} for more details).

Let $V$ be an $E$-representation of rank $d$ and $A\in \mathcal{C}_E$. 
We say that the pair $(V_A,\psi_A)$ is a deformation of $V$ over $A$ if 
$V_A$ is an $A$-representation and $\psi_A:V_A\otimes_A E\isom V$ is an isomorphism 
of $E$-representations. Let $(V_A,\psi_A)$ and $(V'_A,\psi'_A)$ be two deformations of $V$ over $A$, we say that 
$(V_A,\psi_A)$ and $(V'_A,\psi'_A)$ are  equivalent if there exists an isomorphism $f:V_A\isom V'_A$ of 
$A$-representations such that $\psi_A=\psi'_A\circ (f\otimes_A \mathrm{id}_{E})$. 
We define a functor $D_V:\mathcal{C}_E\rightarrow (Sets)$ by 
$$D_V(A):=\{\text{ equivalent classes of deformations of } V \text{ over } A\}$$
for $A\in \mathcal{C}_E$.

Next, we consider the pair  $(V,T)$ where $V$ is a split trianguline $E$-representation with a triangulation 
$T:0\subseteq W_1\subseteq W_2\subseteq \cdots \subseteq W_d=W(V)$. For $A\in \mathcal{C}_E$, we say that the triple $(V_A,\psi_A,T_A)$ is a trianguline 
deformation of $(V,T)$ over $A$ if $(V_A,\psi_A)$ is a deformation of $V$ over $A$ and 
$$T_A:0\subseteq W_{1,A}\subseteq W_{2,A}\subseteq \cdots \subseteq W_{d,A}=W(V_A)$$ is 
an $A$-triangulation of $V_A$ such that $$W(\psi_A)(W_{i,A}\otimes_A E)=W_i$$ for any 
$1\leqq i\leqq d$, where $W(\psi_A): W(V_A)\otimes_A E\isom W(V)$ is the isomorphism induced from $\psi_A$.
We say that two trianguline deformations $(V_A,\psi_A,T_A)$ and 
$(V'_A,\psi'_A,T'_A)$  over $A$ are equivalent if there exists an isomorphism $f:V_A\isom V'_A$ of $A$-representations 
such that $\psi_A=\psi'_A\circ (f\otimes_A \mathrm{id}_{E})$ and $W(f)(W_{i,A})=W'_{i,A}$ for any 
$1\leqq i\leqq d$. We define a functor $D_{V,T}\rightarrow (Sets)$ by 
$$D_{V,T}(A):=\{ \text{ equivalent classes of trianguline deformations of } (V,T) \text{ over } A\}$$
for $A\in \mathcal{C}_E$. Later, we simply write $[V_A]\in D_V(A)$ or $[(V_A,T_A)]\in D_{V,T}(A)$ instead of $[(V_A,\psi_A)]$ or $[(V_A,\psi_A,T_A)]$ if there is no risk of confusing about $\psi_A$.

We have a morphism of functors $D_{V,T}\rightarrow D_V$ defined by 
$[(V_A,T_A)]\mapsto [V_A]$. If $D_V$ and $D_{V,T}$ are 
represented by $R_V$ and $R_{V,T}$, then this morphism is given by a map $R_V\rightarrow R_{V,T}$, 
which is a surjection in many cases. For the representability and other properties of $D_{V,T}$, we have the following proposition.

\begin{prop}\label{4.2}
Let $V$ be a split trianguline $E$-representation with a triangulation $\mathcal{T}$ whose parameter 
is $\{\delta_i\}_{i=1}^d$. We assume that $(V,T)$ satisfies the following conditions, 
\begin{itemize}
\item[(i)]$\mathrm{End}_{E[G_K]}(V)=E$ (then $D_V$ is representable),
\item[(ii)]$\delta_{j}/\delta_i\not=\prod_{\sigma\in \mathcal{P}}\sigma^{k_{\sigma}}$ for any $i<j$ and 
$\{k_{\sigma}\}_{\sigma\in \mathcal{P}}\in \prod_{\sigma\in \mathcal{P}}\mathbb{Z}_{\leqq 0}$, 

\end{itemize}
then the functor $D_{V,T}$ is representable by a quotient $R_{V,T}$ of $R_V$.
Moreover, if  $(V,T)$ satisfies the following additional condition,
\begin{itemize}
\item[(iii)]$\delta_i/\delta_j\not=|N_{K/\mathbb{Q}_p}|_p\prod_{\sigma\in \mathcal{P}}\sigma^{k_{\sigma}}$ for any $i<j$ and $\{k_{\sigma}\}_{\sigma\in \mathcal{P}}\in \prod_{\sigma\in \mathcal{P}}\mathbb{Z}_{\geqq 1}$,
\end{itemize}
then $R_{V,T}$ is formally smooth over $E$ of its dimension 
$[K:\mathbb{Q}_p]\frac{d(d+1)}{2}+1$.

\end{prop}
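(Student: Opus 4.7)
The plan is to compute the tangent and obstruction spaces of $D_{V,T}$ via Galois cohomology of rank one $E$-$B$-pairs. The key object is the sub $E$-$B$-pair $\mathrm{ad}(W(V),T) \subset \mathrm{End}(W(V))$ of ``trianguline adjoint'', namely the $E$-$B$-pair of endomorphisms preserving the filtration $T$. It has rank $d(d+1)/2$ and admits a natural filtration with graded pieces $W(1)$ (diagonal, $d$ copies) and $W(\delta_i \delta_j^{-1})$ for the $\binom{d}{2}$ strict-off-diagonal pairs.

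For representability, we use that $D_V$ is pro-representable by $R_V$ under (i) (standard Mazur theory), and show that the forgetful morphism $D_{V,T} \to D_V$ is relatively representable by a closed condition. The essential point is uniqueness of trianguline lifts: for any $A \in \mathcal{C}_E$ and any deformation $V_A$ of $V$, any $A$-triangulation $T_A$ of $W(V_A)$ restricting to $T$ is unique. Inducting on the length of $A$, this reduces to the vanishing $\mathrm{Hom}_{B\text{-}\mathrm{pair}}(W(\delta_i), W(\delta_j)) = H^0(G_K, W(\delta_j \delta_i^{-1})) = 0$ for $i < j$. By the classification of cohomology of rank one $E$-$B$-pairs, $H^0(W(\eta))$ is non-zero precisely when $\eta = \prod_{\sigma \in \mathcal{P}} \sigma^{k_\sigma}$ with all $k_\sigma \leq 0$---exactly what condition (ii) excludes. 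Schlessinger's criterion then yields the representing quotient $R_{V,T}$ of $R_V$.

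For the tangent space, one computes $D_{V,T}(E[\epsilon])$ by d\'evissage along $T$. The contributions are: each deformation of the rank one parameter $\delta_i$ contributes $\dim_E \mathrm{Hom}_{\mathrm{cts}}(K^\times, E) = [K:\mathbb{Q}_p]+1$, giving $d([K:\mathbb{Q}_p]+1)$ total; each strict off-diagonal extension class contributes $\dim_E H^1(G_K, W(\delta_i \delta_j^{-1})) = [K:\mathbb{Q}_p]$, via the Euler--Poincar\'e formula $\dim H^0 - \dim H^1 + \dim H^2 = -[K:\mathbb{Q}_p]$ for rank one $E$-$B$-pairs, together with the vanishings $H^0 = 0$ (under (ii)) and $H^2 = 0$ (under (iii), via local Tate duality for $B$-pairs, which converts condition (iii) into the condition that the Tate-twisted dual character has trivial $H^0$). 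Summing over the $\binom{d}{2}$ pairs and subtracting the $(d-1)$-dimensional space of diagonal scalar endomorphisms that collapse to the one-dimensional $\mathrm{End}_{E[G_K]}(V) = E$ (by (i)), one finds
\[
\dim_E D_{V,T}(E[\epsilon]) = d([K:\mathbb{Q}_p]+1) + \binom{d}{2}[K:\mathbb{Q}_p] - (d-1) = [K:\mathbb{Q}_p]\frac{d(d+1)}{2}+1.
\]

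For formal smoothness, we show the obstruction to lifting any trianguline deformation along any small extension $0 \to E \to A' \to A \to 0$ in $\mathcal{C}_E$ vanishes. Proceeding inductively along $T_A$, the obstructions at each step lie in various $H^2(G_K, W(\delta_i \delta_j^{-1}))$: the off-diagonal ones vanish by (iii) and Tate duality, while the diagonal $H^2(W(1))$ correspond to obstructions to deforming the individual characters $\delta_{i,A}$ and are automatically zero (the character deformation functor being formally smooth, pro-representable by a power series ring of dimension $[K:\mathbb{Q}_p]+1$ over $E$). The main technical hurdle is organizing these filtration-wise vanishings into a coherent global statement about the trianguline obstruction space---this is the heart of the Bella\"iche--Chenevier formalism as extended to general $K$ in \cite{Na10}.
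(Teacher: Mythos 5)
Your overall strategy---computing representability and dimension via the ``trianguline adjoint'' $\mathrm{ad}^T\subset \mathrm{ad}(W(V))$ of filtration-preserving endomorphisms, using $H^0$ of the quotient $\mathrm{ad}/\mathrm{ad}^T$ for uniqueness of the lifted triangulation, and $H^2(\mathrm{ad}^T)$ for formal smoothness---is precisely the Bella\"iche--Chenevier argument that the paper invokes by citation to \cite{BeCh09} and \cite{Na10} (the paper gives no proof of its own). Your treatment of representability and of formal smoothness is sound.

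However, the tangent-space count contains a concrete slip. The graded pieces of $\mathrm{ad}^T$ are $W(1)$ on the diagonal and $W(\delta_i\delta_j^{-1})$ for $i<j$ above it, while those of $\mathrm{ad}/\mathrm{ad}^T$ are $W(\delta_j\delta_i^{-1})$ for $i<j$. Condition (ii) says exactly that $H^0(G_K, W(\delta_j\delta_i^{-1}))=0$ for $i<j$, which is what you correctly use for uniqueness. It does \emph{not} give $H^0(G_K, W(\delta_i\delta_j^{-1}))=0$ for $i<j$: for instance over $K=\mathbb{Q}_p$ with $\delta_1(x)=x^{-1}$ and $\delta_2=1$, conditions (ii) and (iii) hold but $H^0(W(\delta_1/\delta_2))\neq 0$. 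Thus your per-piece assertion $\dim H^1(W(\delta_i\delta_j^{-1}))=[K:\mathbb{Q}_p]$ can fail, and your subtraction of exactly $(d-1)$ is calibrated only to the special case where all those off-diagonal $H^0$'s vanish. The answer you obtain is nevertheless correct because the two errors cancel; the clean way to see this is to avoid per-piece $H^0$'s altogether: (i) alone gives $H^0(\mathrm{ad}^T)=E$ (a $G_K$-equivariant, filtration-preserving endomorphism of $W(V)$ is a scalar), (iii) together with $H^2(W(1))=0$ gives $H^2(\mathrm{ad}^T)=0$, and then additivity of the Euler--Poincar\'e characteristic for the rank-$\tfrac{d(d+1)}{2}$ $B$-pair $\mathrm{ad}^T$ yields $\dim H^1(\mathrm{ad}^T)=1+[K:\mathbb{Q}_p]\tfrac{d(d+1)}{2}$ with no control of the individual graded $H^0$'s required. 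With that substitution your argument is the standard one.
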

\begin{proof}
See \cite{BeCh09} and Corollary 2.30, Lemma 2.48 and Proposition 2.39 of \cite{Na10}.
\end{proof}

Next, we recall some relations between crystalline representations and trianguline representations.
Let $V$ be a crystalline $E$-representation of rank $d$. We define the crystalline deformation functor 
$D_V^{\mathrm{cris}}$ which is a subfunctor of $D_V$ defined by 
$$D_{V}^{\mathrm{cris}}(A):=\{[V_A]\in D_{V}(A)| V_A \text{ is crystalline } \}$$ 
for $A\in \mathcal{C}_E$. The natural inclusion $D^{\mathrm{cris}}_V\hookrightarrow D_V$ is 
relatively representable, and $D_V^{\mathrm{cris}}$ is formally smooth (\cite{Ki08}).

Let $\bold{D}_{\mathrm{cris}}(V):=(\bold{B}_{\mathrm{cris}}\otimes_{\mathbb{Q}_p}V)^{G_K}$ be 
the filtered $\varphi$-module associated to $V$, which is a finite free $K_0\otimes_{\mathbb{Q}_p}E$-module of rank $d$. 
Let $\{\alpha_1,\alpha_2,\cdots,\alpha_d\} \subseteq \overline{E}$ be the set (possibly with multiplicity) of eigenvalues of $\varphi^f$ ($f:=[K_0:\mathbb{Q}_p]$) 
 on $\bold{D}_{\mathrm{cris}}(V)\otimes_{K_0\otimes_{\mathbb{Q}_p}E,\sigma\otimes\mathrm{id}_E}\overline{E}$ for one $\sigma:K_0\hookrightarrow 
\overline{E}$, which does not depend on the choice of $\sigma$. We assume that $\alpha_i\not=\alpha_j$ for any $i\not=j$. Extending scalars, we assume that $\{\alpha_1,\cdots,\alpha_d\}\subseteq E$ and $\bold{D}_{\mathrm{cris}}(V)$ can be written as
$$\bold{D}_{\mathrm{cris}}(V)=K_0\otimes_{\mathbb{Q}_p}Ee_1\oplus K_0\otimes_{\mathbb{Q}_p}Ee_2\oplus \cdots\oplus 
K_0\otimes_{\mathbb{Q}_p}Ee_d$$ such that $K_0\otimes_{\mathbb{Q}_p}Ee_i$ is $\varphi$-stable and $\varphi^f(e_i)=\alpha_ie_i$ 
for any $1\leqq i\leqq d$. Let $\mathfrak{S}_d$ be the $d$-th permutation group.
Under these assumptions, we define a filtration as filtered $\varphi$-modules
$$\mathcal{F}_{\tau}:0\subseteq D_{\tau,1}\subseteq D_{\tau,2}\subseteq 
\cdots \subseteq D_{\tau,d}=\bold{D}_{\mathrm{cris}}(V)$$ for each 
$\tau\in \mathfrak{S}_d$ by 
$$D_{\tau,i}:=\bigoplus_{j=1}^iK_0\otimes_{\mathbb{Q}_p}Ee_{\tau(j)}$$ 
for $1\leqq i\leqq d$, whose Hodge filtrations are 
induced from that on $\bold{D}_{\mathrm{cris}}(V)$, i.e. we define $\mathrm{Fil}^k
(K\otimes_{K_0}D_{\tau,i}):=(K\otimes_{K_0}D_{\tau,i})\cap \mathrm{Fil}^k(K\otimes_{K_0}\bold{D}_{\mathrm{cris}}(V))$ for each $k\in \mathbb{Z}$. By the equivalence between the category 
of $E$-filtered $\varphi$-modules and the category of crystalline $E$-$B$-pairs (see \cite{Be08} or \cite{Na09}, \cite{Na10}), for each $\tau\in\mathfrak{S}_d$,
we obtain a triangulation
$$T_{\tau}:0\subseteq W_{\tau,1}\subseteq W_{\tau,2}\subseteq \cdots\subseteq W_{\tau,d}=W(V)$$ 
by crystalline $E$-$B$-pairs $\{W_{\tau,i}\}_{1\leqq i\leqq d}$ such that $\bold{D}_{\mathrm{cris}}(W_{\tau,i})\isom D_{\tau,i}$ for any $1\leqq i\leqq d$. We recall the definition of benign representation in \cite{Na10} which is also called generic crystalline representation in \cite{Ch11}, whose deformation theoretic property plays a crucial role in 
the proof of the main theorem of this article. Let $\{k_{1,\sigma},k_{2,\sigma},\cdots, k_{d,\sigma}\}_{\sigma\in \mathcal{P}}$ be 
the set of Hodge-Tate weights of $V$ (possibly with multiplicity) such that $k_{1,\sigma}\geqq k_{2,\sigma}\geqq \cdots\geqq k_{d,\sigma}$ for any 
$\sigma\in \mathcal{P}$. In this article, we define the Hodge-Tate weight of the $p$-adic cyclotomic character 
$\chi_p:G_K\rightarrow E^{\times}$ to be  $\{1\}_{\sigma\in\mathcal{P}}$.

\begin{defn}
Let $V$ be a crystalline representation satisfying all the assumptions in the previous paragraph. 
We say that $V$ is benign if $V$ satisfies the following conditions,
\begin{itemize}
\item[(1)]$\alpha_i\not=\alpha_j, p^{\pm f}\alpha_j$ for any $i\not=j$,
\item[(2)]$k_{1,\sigma}>k_{2,\sigma}>\cdots>k_{d,\sigma}$ for any $\sigma\in \mathcal{P}$,
\item[(3)]the Hodge-Tate weights of 
$W_{\tau,i}$ is \allowbreak $\{k_{1,\sigma},k_{2,\sigma},\cdots,k_{i,\sigma}\}_{\sigma\in \mathcal{P}}$ 
for each $\tau\in \mathfrak{S}_d$ and $1\leqq i\leqq d$.

\end{itemize}

\end{defn}

If $V$ is benign, then the pair $(V,T_{\tau})$ satisfies all the conditions in Proposition \ref{4.2} for any $\tau\in \mathfrak{S}_d$, hence 
the functors $D_V$ and $D_{V,T_{\tau}}$ for all $\tau\in\mathfrak{S}_d$ 
are  representable by $R_{V}$ and $R_{V,T_{\tau}}$ which are quotients 
of $R_V$. For $R_{*}=R_V, R_{V,T_{\tau}}$, we define the tangent space of $R_{*}$ by 
$$t_{R_{*}}:=\mathrm{Hom}_E(\mathfrak{m}_{R_{*}}/\mathfrak{m}^2_{R_{*}}, E),$$ 
where $\mathfrak{m}_{R_{*}}$ is the maximal ideal of $R_*$. Hence, we obtain a natural 
inclusion $t_{R_{V,T_{\tau}}}\hookrightarrow t_{R_V}$ for each $\tau\in \mathfrak{S}_d$.

The following theorem is a crucial for the proof of the main theorem of this article, which 
was discovered by Chenevier (Theorem 3.19 of \cite{Ch11}).

\begin{thm}\label{4.3}
Let $V$ be a benign representation of rank $d$, then we have an equality 
$$\sum_{\tau\in \mathfrak{S}_d}t_{R_{V,T_{\tau}}}=t_{R_V}.$$

\end{thm}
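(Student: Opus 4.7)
My strategy is dimension-theoretic: compute $\dim_E t_{R_V}$ and $\dim_E t_{R_{V,T_\tau}}$ separately via local Galois cohomology, then use local Tate duality in the $B$-pair category together with the combinatorics of the $d!$ flags to show that the $d!$ trianguline tangent spaces span the ambient tangent space $t_{R_V}$.

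First, for each $\tau \in \mathfrak{S}_d$, I verify that $(V, T_\tau)$ satisfies hypotheses (i), (ii), (iii) of Proposition \ref{4.2}. Absolute irreducibility of $V$ (giving (i)) follows from the distinct Frobenius eigenvalues and distinct Hodge--Tate weights (benign (1), (2)). The forbidden equalities in (ii) and (iii), upon evaluating the relevant characters at $\pi_K$, reduce to the Frobenius coincidences $\alpha_{\tau(j)} = \alpha_{\tau(i)}$ and $\alpha_{\tau(j)}/\alpha_{\tau(i)} = p^{f}$ respectively (using $|N_{K/\mathbb{Q}_p}|_p(\pi_K) = p^{-f}$), both excluded by benign (1). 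Proposition \ref{4.2} then gives $\dim_E t_{R_{V,T_\tau}} = [K:\mathbb{Q}_p]\frac{d(d+1)}{2} + 1$ for every $\tau$. On the ambient side, Tate's local Euler characteristic formula combined with $H^0(G_K, \mathrm{ad}(V)) = E$ (irreducibility) and $H^2(G_K, \mathrm{ad}(V)) \cong H^0(G_K, \mathrm{ad}(V)(1))^\vee = 0$ (a nonzero class would yield a $G_K$-equivariant $V \to V(1)$ and force a Frobenius coincidence excluded by benign (1)) gives $\dim_E t_{R_V} = [K:\mathbb{Q}_p] d^2 + 1$.

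For the equality $\sum_\tau t_{R_{V,T_\tau}} = t_{R_V}$, I would identify each $t_{R_{V,T_\tau}}$ cohomologically with the image of $H^1(G_K, \mathcal{P}_\tau) \to H^1(G_K, \mathrm{ad}(V))$, where $\mathcal{P}_\tau \subset W(\mathrm{ad}(V))$ is the sub-$B$-pair of endomorphisms preserving the flag $T_\tau$. By local Tate duality in the $B$-pair category, the desired equality is then equivalent to the vanishing $\bigcap_\tau \mathrm{Im}(H^1(G_K, \mathcal{N}_\tau(1)) \to H^1(G_K, \mathrm{ad}(V)(1))) = 0$, where $\mathcal{N}_\tau$ is the sub-$B$-pair orthogonal to $\mathcal{P}_\tau$ under the trace pairing. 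The main obstacle lies here: the $\mathcal{N}_\tau$'s sit as sub-$B$-pairs (not as direct summands) of $\mathrm{ad}(V)$, so the intersection of the images in $H^1$ can \emph{a priori} strictly exceed the image of the intersection $\bigcap_\tau \mathcal{N}_\tau = 0$. The decisive input is the combinatorial fact that every off-diagonal ``root'' $(i,j)$ is excluded from $\mathcal{N}_\tau$ by some transposition $\tau$, combined with benign genericity (conditions (1), (2), (3) together), which controls each rank-one graded piece $H^1(W(\delta_i/\delta_j)(1))$ and ensures the parabolic combinatorics lifts faithfully from the graded pieces to $B$-pair cohomology, forcing the intersection to vanish.
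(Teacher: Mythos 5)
The scaffolding of your proposal is sound and, as far as I can tell, follows the same general framework as the cited sources: identify $t_{R_{V,T_\tau}}$ with $H^1(G_K,\mathcal{P}_\tau)$ and its image in $H^1(G_K,\mathrm{ad}(V))$, note that the parabolics $\mathcal{P}_\tau$ are self-normalizing sub-$B$-pairs whose orthogonal complements $\mathcal{N}_\tau$ under the trace pairing satisfy $\bigcap_\tau \mathcal{N}_\tau=0$, reduce via local Tate duality to showing $\bigcap_\tau \mathrm{Im}\bigl(H^1(\mathcal{N}_\tau(\chi_p))\to H^1(\mathrm{ad}(V)(\chi_p))\bigr)=0$. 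The dimension computations and the verification of the hypotheses of Proposition \ref{4.2} for each $\tau$ are also reasonable (though the reduction of conditions (ii)--(iii) to Frobenius coincidences via ``evaluation at $\pi_K$'' should be stated with more care, since one must first use the Hodge--Tate weight condition to reduce to the case where $\delta_j/\delta_i$ restricted to $\mathcal{O}_K^\times$ is algebraic, and only then does evaluating at $\pi_K$ give the eigenvalue relation).

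However, the final and decisive step is a genuine gap. You correctly identify the obstacle: since the $\mathcal{N}_\tau$ are not direct summands of $\mathrm{ad}(V)(\chi_p)$, the vanishing $\bigcap_\tau \mathcal{N}_\tau=0$ does \emph{not} formally imply $\bigcap_\tau\mathrm{Im}\,H^1(\mathcal{N}_\tau(\chi_p))=0$. But the sentence that follows --- ``benign genericity ... ensures the parabolic combinatorics lifts faithfully from the graded pieces to $B$-pair cohomology, forcing the intersection to vanish'' --- is an assertion, not an argument. Unwound, what is actually needed is something like: for every pair $\tau,\tau'$ (or after a judicious inductive peeling over simple transpositions) the map $H^1(\mathcal{N}_\tau+\mathcal{N}_{\tau'})\to H^1(\mathrm{ad}(V)(\chi_p))$ must be shown injective, which requires verifying $H^0$-vanishing for the quotient $B$-pairs $\mathrm{ad}(V)(\chi_p)/(\mathcal{N}_\tau+\mathcal{N}_{\tau'})$, and that the sub-$B$-pairs $\mathcal{N}_\tau+\mathcal{N}_{\tau'}$ are well-behaved (e.g.\ crystalline with controlled graded pieces). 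This in turn rests crucially on the fact that $\mathrm{ad}(V)$ is crystalline with pairwise distinct $\varphi^f$-eigenvalues $\alpha_i/\alpha_j$, so that the eigenspace decomposition on the $\varphi$-module side is genuinely a direct sum and the $\mathcal{N}_\tau$ are cut out by it --- the mixing happens only in the Hodge filtration, and benign condition (3) is precisely what controls that mixing. None of this is spelled out in your proposal, and it is exactly where the content of Chenevier's Theorem 3.19 / Nakamura's Theorem 2.61 lies. In its present form the argument stops at the point where the real work begins.
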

\begin{proof}
See  Theorem 3.19 of \cite{Ch11} (for $K=\mathbb{Q}_p$) and Theorem 2.61 of \cite{Na10} 
(for general $K$).

\end{proof}

\section{Construction of the local eigenvarieties}
This section is the technical heart of this article. We construct some (approximation of) 
$p$-adic families of trianguline representations. In $\S10$ of \cite{Ki03} (for $K=\mathbb{Q}_p$) 
and $\S3$ of \cite{Na10} (for general $K$), they constructed such families for two dimensional case using the theory of finite slope subspace. In this section, we generalize their constructions for higher 
dimensional case by using the (slightly generalized version of) finite slope subspace and a technique of \cite{BeCh09} for the study of trianguline deformations using exterior products.

\subsection{finite slope subspace}
We first generalize Proposition 5.4 of \cite{Ki03} and Theorem 3.9 of \cite{Na10} as follows. 
We first recall some terminologies which are used in \cite{Ki03},\cite{Na10} (see \S2 and \S 3 of \cite{Na10} for more details). 

Let $\mathbb{C}_p$ be the $p$-adic completion of an algebraic closure $\overline{\mathbb{Q}}_p$ of $\mathbb{Q}_p$. Denote by $v_p:\mathbb{C}_p^{\times}\rightarrow \mathbb{Q}$ the valuation such that 
$v_p(p)=1$. Denote by $\widetilde{\bold{E}}^+:=\varprojlim_{n\geqq 0}\mathcal{O}_{\mathbb{C}_p}/p$ the projective limit by the $p$-th power map. Define the valuation $v$ on $\widetilde{\bold{E}}^+$ by 
$v((\overline{x}_n)_{n\geqq 0}):=\lim_{n\rightarrow \infty}v_p(x_n)$ where $x_n\in \mathcal{O}_{\mathbb{C}_p}$ is a lift of $\bar{x}_n$. Take a set $\{p_n\}_{n\geqq 0}\subseteq \overline{\mathbb{Q}}_p$ such that $p_0=p$, $p_{n+1}^p=p_n$ for any $n\geqq 0$. 
Set $\tilde{p}:=(\bar{p}_n)_{n\geqq 0}\in \widetilde{\bold{E}}^+$. Denote by $W(\widetilde{\bold{E}}^+)$ 
the ring of Witt vectors of $\widetilde{\bold{E}}^+$, and by $[-]:\widetilde{\bold{E}}^+\rightarrow 
W(\widetilde{\bold{E}}^+)$ the Teichm\"uler lift. Set $\bold{A}_{\mathrm{max}}:=W(\widetilde{\bold{E}}^+)[\frac{[\tilde{p}]}{p}]^{\wedge}$, where $(-)^{\wedge}$ is the $p$-adic completion of $(-)$. The actions of $G_K$ and Frobenius 
$\varphi$ on $W(\widetilde{\bold{E}}^+)$ naturally extend to $\bold{A}_{\mathrm{max}}$ and 
$\bold{B}^+_{\mathrm{max}}:=\bold{A}_{\mathrm{max}}[1/p]$. Define a $K$-Banach algebra $\bold{B}^+_{\mathrm{max},K}
:=K\otimes_{K_0}\bold{B}^+_{\mathrm{max}}$,  and define $\varphi_K:=\mathrm{id}_K\otimes\varphi^f
:\bold{B}^+_{\mathrm{max},K}\rightarrow \bold{B}^+_{\mathrm{max},K}$.

Let $X$ be a separated rigid analytic space over $E$ in the sense of Tate. 
For $x\in X$, we denote by $E(x)$ the residue field of $X$ at $x$, which is 
a finite extension of $E$. We say that an admissible open set $U\subseteq X$ is scheme theoretically dense 
in $X$ if there exists an admissible affinoid covering $\{X_i:=\mathrm{Spm}(R_i)\}_{i\in I}$ of $X$ such that 
$U\cap X_i$ is associated to a dense Zariski open $U_i\subseteq \mathrm{Spec}(R_i)$ for any $i\in I$.
For an invertible function $Y\in\Gamma(X, \mathcal{O}_X)^{\times}$ and an $E$-affinoid algebra $R$, we say that 
an $E$-morphism $f:\mathrm{Spm}(R)\rightarrow X$ is $Y$-small if there exist a finite extension $E'$ of $E$ 
and an element $\lambda\in (R\otimes_E E')^{\times}$ such that $E'[\lambda]\subseteq R\otimes_EE'$ is a finite \'etale 
$E'$-algebra and $\frac{Y}{\lambda}$ is topologically nilpotent in $R\otimes_E E'$. For any $f\in \Gamma(X,\mathcal{O}_X)$, 
we denote by $X_f:=\{x\in X| f(x)\not=0\}$ the Zariski open of $X$ on which $f$ is not zero.

For a finite free $\mathcal{O}_X$-module $M$ with a continuous $\mathcal{O}_X$-linear $G_K$-action, 
we denote by $M(x)$ the fiber of $M$ at $x$, which is an $E(x)$-representation of $G_K$. We denote by $M^{\vee}$ the $\mathcal{O}_X$-dual of $M$.
For such $M$ of rank $n$, 
we can define Sen's polynomial $$P_{M}(T)\in K\otimes_{\mathbb{Q}_p}\Gamma(X, \mathcal{O}_X)[T],$$ 
which is a monic polynomial of degree $n$, 
such that the fiber $P_{M}(T)(x)$ at $x$ is equal to Sen's polynomial $P_{M(x)}(T)\in 
K\otimes_{\mathbb{Q}_p}E(x)[T]$ of $M(x)$ for any $x\in X$(see for example \cite{Ki03} (2.2)). Using the canonical 
decomposition 
$$K\otimes_{\mathbb{Q}_p}\Gamma(X,\mathcal{O}_X)[T]\isom \prod_{\sigma\in \mathcal{P}}\Gamma(X,\mathcal{O}_X)[T]: 
a\otimes f(T)\mapsto (\sigma(a)f(T))_{\sigma\in \mathcal{P}},$$ we decompose $P_{M}(T)$ into
$$P_{M}(T)=(P_{M,\sigma}(T))_{\sigma\in \mathcal{P}}\in \prod_{\sigma\in \mathcal{P}}\Gamma(X,\mathcal{O}_X)[T].$$ 
Let $d\in \mathbb{Z}_{\geqq 1}$ be a positive integer. Assume that we are given $d$-pairs $\{(M_i,Y_i)\}_{1\leqq i\leqq d}$, where $M_i$ are finite free $\mathcal{O}_X$-modules with continuous $\mathcal{O}_X$-linear $G_K$-actions and $Y_i\in \Gamma(X,\mathcal{O}_X)^{\times}$.
We assume that the $\sigma$-part of Sen's polynomial $P_{M_i,\sigma}(T)$ of $M_i$ can be written as 
$$P_{M_i,\sigma}(T)=TQ_{i,\sigma}(T)$$ for a monic polynomial 
$Q_{i,\sigma}(T)\in \Gamma(X, \mathcal{O}_X)[T]$ for any $1\leqq i\leqq d$ and $\sigma\in \mathcal{P}$.

Under this situation, we prove the following theorem, which is a slightly  generalized version of Proposition 
5.4 of \cite{Ki03} and Theorem 3.9 of \cite{Na10}.

\begin{thm}\label{1.1}
Under the above situation, there exists a unique Zariski closed subspace $X_{fs}\subseteq X$ 
satisfying the following conditions (1) and (2).
\begin{itemize}
\item[(1)]For any $1\leqq i\leqq d$, $\sigma\in \mathcal{P}$ and 
$j\in \mathbb{Z}_{\leqq 0}$, the subset $X_{fs, Q_{i,\sigma}(j)}$ is scheme theoretically 
dense in $X_{fs}$.
\item[(2)]For any $E$-morphism $f:\mathrm{Spm}(R)\rightarrow X$ which 
is $Y_i$-small for any $1\leqq i\leqq d$ and factors through $X_{Q_{i,\sigma}(j)}$ for any 
$1\leqq i\leqq d$, $\sigma\in \mathcal{P}$ and $j\in \mathbb{Z}_{\leqq 0}$, the following 
conditions (i) and (ii) are equivalent.
\begin{itemize}
\item[(i)]$f$ factors through $f:\mathrm{Spm}(R)\rightarrow X_{fs}\hookrightarrow X$.
\item[(ii)]For any $1\leqq i\leqq d$, any $R$-linear $G_K$-equivariant map 
$$h:f^*(M_i^{\vee})\rightarrow \bold{B}^+_{\mathrm{dR}}\hat{\otimes}_{\mathbb{Q}_p}R$$ factors through 
$$h:f^*(M_i^{\vee})\rightarrow K\otimes_{K_0}(\bold{B}^+_{\mathrm{max}}\hat{\otimes}_{\mathbb{Q}_p}R)^{\varphi^f=Y_i}
\hookrightarrow \bold{B}^+_{\mathrm{dR}}\hat{\otimes}_{\mathbb{Q}_p}R.$$
\end{itemize}

\end{itemize}

\end{thm}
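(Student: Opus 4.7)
The plan is to reduce the $d$-pair statement to the known single-pair case (Proposition 5.4 of \cite{Ki03} for $K = \mathbb{Q}_p$ and Theorem 3.9 of \cite{Na10} in general) by iterating over $i = 1, \ldots, d$. Set $X_0 := X$, and inductively apply the single-pair theorem to the restriction $(M_i|_{X_{i-1}}, Y_i|_{X_{i-1}})$ on $X_{i-1}$ to produce a Zariski closed subspace $X_i \subseteq X_{i-1}$ satisfying the single-pair analogues of (1) and (2) inside $X_{i-1}$. Define $X_{fs} := X_d$. The uniqueness assertion in the $d$-pair theorem will then follow from uniqueness at each stage, once both conditions are verified for this $X_{fs}$.

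To verify condition (2), consider an $E$-morphism $f : \mathrm{Spm}(R) \to X$ that is $Y_i$-small for every $i$ and factors through $X_{Q_{i,\sigma}(j)}$ for all $i$, $\sigma \in \mathcal{P}$ and $j \in \mathbb{Z}_{\leqq 0}$. I would argue by induction on $i$: assuming $f$ factors through $X_{i-1}$, the induced map remains $Y_i$-small and factors through $X_{i-1, Q_{i,\sigma}(j)}$, so the single-pair theorem applied on $X_{i-1}$ gives that $f$ factors through $X_i$ iff every $R$-linear $G_K$-equivariant map $f^*(M_i^\vee) \to \bold{B}^+_{\mathrm{dR}} \hat{\otimes}_{\mathbb{Q}_p} R$ lands in $K \otimes_{K_0}(\bold{B}^+_{\mathrm{max}} \hat{\otimes}_{\mathbb{Q}_p} R)^{\varphi^f = Y_i}$. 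Chaining these equivalences for $i = 1, \ldots, d$ yields (2). In particular, the hypotheses of the single-pair theorem persist at each stage because being $Y_j$-small and factoring through $X_{Q_{j,\sigma}(j')}$ for $j \neq i$ are stable under further restriction to $X_{i-1}$.

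The main obstacle is condition (1): scheme-theoretic density of $X_{fs, Q_{i,\sigma}(j)}$ in $X_{fs}$ for \emph{every} $i$, not only $i = d$. For $i = d$ this is the output of the final iteration. For $i < d$, density holds inside $X_i$, but successive truncations $X_{i+1}, \ldots, X_d$ could a priori introduce new irreducible components lying entirely in $V(Q_{i,\sigma}(j))$. To rule this out, I would exploit the internal structure of the single-pair construction, which realizes its $X_{fs}$ as the scheme-theoretic closure in its ambient space of an admissible open contained in $\bigcap_{\sigma, j \leqq 0} X_{Q_\sigma(j)}$, arising from Kisin's coherent sheaf associated with the splitting $P_{M,\sigma}(T) = T \cdot Q_\sigma(T)$ of Sen's operator. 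Since each subsequent stage takes a closure of an admissible open generically containing the good loci for the already-handled indices, the intersection $X_{fs} \cap \bigcap_{i, \sigma, j \leqq 0} X_{Q_{i,\sigma}(j)}$ stays scheme-theoretically dense in $X_{fs}$, giving (1) uniformly in $i$. The technical heart of the argument is thus a careful bookkeeping of flatness and closure inside the single-pair construction so that density transfers from $X_i$ to $X_d$.
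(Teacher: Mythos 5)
Your iterative reduction is a genuinely different strategy from the paper's: the paper constructs $X_{fs}$ in one shot by forming, on a suitable affinoid $\mathrm{Spm}(R)$, the single ideal $I$ generated by the finite-slope conditions for \emph{all} indices $i$ at once, and then passes to the smallest ideal $I'\supseteq I$ such that $R/I'\rightarrow R/I'[\tfrac{1}{Q_{i,\sigma}(j)}]$ is injective for \emph{all} triples $(i,\sigma,j)$ simultaneously. Your plan of applying the single-pair theorem $d$ times in succession is natural, and your chaining argument for condition (2) is correct: the $Y_i$-smallness and the factoring through $X_{Q_{i,\sigma}(j)}$ persist under restriction to $X_{i-1}$, so the single-pair equivalence applies at each stage.

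The gap is exactly where you flag it, in condition (1), and your proposed fix does not close it. After stage $i$ the construction saturates only with respect to the $Q_{i,\sigma}(j)$; nothing prevents the new ideal from acquiring an associated prime containing some $Q_{j',\sigma}(\ell)$ with $j'<i$, i.e.\ a component of $X_i$ lying entirely in $V(Q_{j',\sigma}(\ell))$, which would destroy the density of $X_{i,Q_{j',\sigma}(\ell)}$ that you had at stage $i-1$. Your appeal to ``closure of an admissible open contained in $\bigcap_{\sigma,j}X_{Q_\sigma(j)}$'' also misreads the single-pair statement: condition (1) asserts density of each $X_{fs,Q_\sigma(j)}$ separately, not density of their intersection (which, ranging over all $j\leqq 0$, may well be empty); and in any case the admissible opens controlling stage $i$ are governed by $Q_{i,\sigma}(j)$, not by the earlier indices, so there is no ``generic containment of the good loci for the already-handled indices'' to invoke. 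The repair is a final global saturation: replace the iterated ideal $J_d$ by the smallest $I'\supseteq J_d$ for which $R/I'\rightarrow R/I'[\tfrac{1}{Q_{i,\sigma}(j)}]$ is injective for all $i,\sigma,j$; condition (2) survives this because every test morphism $f$ in (2) inverts all $Q_{i,\sigma}(j)$ on $R$, so $f^*$ kills the saturation if and only if it kills $J_d$. Once you add that step you are essentially reproducing the paper's simultaneous construction, and the iterative scaffolding is no longer doing independent work; you would do better to define $I$ from all $(M_i,Y_i)$ at once and saturate once, as the paper does.
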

\begin{proof}
The proof of the uniqueness is the same as that of Proposition 5.4 of \cite{Ki03} or Theorem 3.9 
of \cite{Na10}. 

By the same argument as in \cite{Ki03} or \cite{Na10}, it suffices to construct 
$X_{fs}$ when $X=\mathrm{Spm}(R)$ is an affinoid $E$-algebra which satisfies  $|Y_i||Y_i^{-1}|< \frac{1}{|\pi_K|_p}$ for any 
$1\leqq i\leqq d$, where $| - |:R\rightarrow \mathbb{Q}_{\geqq 0}$ is an $E$-Banach norm on $R$. Then, we construct $X_{fs}$ as follows. 

Take a sufficiently large $k\in\mathbb{Z}_{\geqq1}$ such that, for any $1\leqq i \leqq d$ and $\lambda_i 
\in \overline{E}$ such that $|Y_i^{-1}|^{-1}\leqq |\lambda_i|_p\leqq |Y_i|$, the natural map 
$$(\bold{B}^+_{\mathrm{max},K}\otimes_{K,\sigma}E')^{\varphi_K=\sigma(\pi_K)\lambda_i}\hookrightarrow 
\bold{B}^+_{\mathrm{dR}}/t^k\bold{B}^+_{\mathrm{dR}} \otimes_{K,\sigma}E'$$ is injective with a closed image for any $\sigma\in \mathcal{P}$, which is 
possible 
by Corollary 3.5 of \cite{Na10}.
 Let $U_{i,\sigma}$ be the cokernel of this map, then 
$U_{i,\sigma}$ is also an $E'$-Banach space and we fix an orthonormalizable basis $\{e_{i,\sigma,j}\}_{j\in J_{i,\sigma}}$ of $U_{i,\sigma}$. Then, for any $R$-linear $G_K$-morphism $$h:M_i^{\vee}\rightarrow \bold{B}^+_{\mathrm{dR}}/t^k\bold{B}^+_{\mathrm{dR}}\hat{\otimes}_{K,\sigma}R$$
and $x\in \widetilde{\bold{E}}^+$ such that $v(x)>0$, we denote by 
$$h_x:M_i^{\vee}\rightarrow U_{i,\sigma}\hat{\otimes}_{E'}(R\otimes_E E')$$ the composition of 
$h$ with the map $$\bold{B}^+_{\mathrm{dR}}/t^k\bold{B}^+_{\mathrm{dR}}\hat{\otimes}_{K,\sigma}R\rightarrow \bold{B}^+_{\mathrm{dR}}/t^k\bold{B}^+_{\mathrm{dR}}\hat{\otimes}_{K,\sigma}(R\otimes_EE'):
y\mapsto P\Bigl(x, \frac{Y_i}{\sigma(\pi_K)\lambda_i}\Bigr)y$$ and the natural quotient map 
$$\bold{B}^+_{\mathrm{dR}}/t^k\bold{B}^+_{\mathrm{dR}}
\hat{\otimes}_{K,\sigma}(R\otimes_EE')\twoheadrightarrow U_{i,\sigma}\hat{\otimes}_{E'}(R\otimes_E E'),$$ where $P\Bigl(x,\frac{Y_i}{\sigma(\pi_K)\lambda_i}\Bigr)$ is defined by 
$$P\Bigl(x,\frac{Y_i}{\sigma(\pi_K)\lambda_i}\Bigr):=
\sum_{n\in \mathbb{Z}} \varphi_K([x])^n\otimes \Bigl(\frac{Y_i}{\sigma(\pi_K)\lambda_i}\Bigr)^n\in (\bold{B}^+_{\mathrm{max},K}\hat{\otimes}_{K,\sigma}(R\otimes_E E'))^{\varphi_K=\frac{\sigma(\pi_K)\lambda_i}{Y_i}}$$  whose convergence is proved in the proof of Theorem 3.9 of \cite{Na10}. 
Then, for any $m\in M_i^{\vee}$, 
we can write uniquely 
$$h_x(m)=\sum_{j\in J_{i,\sigma}}a(h,x,\lambda_i,m)_je_{i,\sigma,j}$$ for some $\{a(h,x,\lambda_i,m)_j\}_{j\in J_{i,\sigma}}\subseteq  R\otimes_EE'$. We define an ideal 
$$I(h,x,\lambda_i)'\subseteq R\otimes_E E'$$ which is generated by $a(h,x,\lambda_i,m)_j$ for all $m\in M_i^{\vee}$ and $j\in J_{\sigma}$.
Because we have an equality $I(h,x,\tau(\lambda_i))'=\tau(I(h,x,\lambda_i)')$ for any $\tau\in \mathrm{Gal}(E'/E)$, the ideal 
$\sum_{\tau\in \mathrm{Gal}(E'/E)}I(h,x,\tau(\lambda_i))'$ descends to an ideal 
$$I(h,x,\lambda_i)\subseteq R.$$ We define an ideal by
$$I:=\sum_{i,h,x,\lambda_i}I(h,x,\lambda_i)\subseteq R.$$ 
Finally, we define the smallest ideal $I'$ so that $I'$ contains $I$ and the natural map $R/I'\rightarrow 
R/I'[\frac{1}{Q_{i,\sigma}(j)}]$ is an injection for any $1\leqq i\leqq d$, $\sigma\in \mathcal{P}$ and 
$j\in \mathbb{Z}_{\leqq 0}$. Then, the closed subspace $\mathrm{Spm}(R/I')$ satisfies the conditions (1) and (2), which we can 
prove in the same way as in the proof of Proposition 5.4 of \cite{Ki03} or Theorem 3.9 of \cite{Na10}.

\end{proof}
Next, we prove a proposition concerning some important properties of $X_{fs}$, which is a generalization of 
Proposition 5.14 of \cite{Ki03} and Proposition 3.14 of \cite{Na10}. 
Let $U=\mathrm{Spm}(R)$ be an affinoid open of $X_{fs}$ which 
is $Y_i$-small for any $1\leqq i\leqq d$. By Proposition 3.7 of \cite{Na10}, 
 for any sufficiently large $k\in \mathbb{Z}_{\geqq 1}$, there exists a short exact sequence,
$$0\rightarrow (\bold{B}^+_{\mathrm{max},K}\hat{\otimes}_{K,\sigma}R)^{\varphi_K=Y_i}
\rightarrow \bold{B}^+_{\mathrm{dR}}/t^k\bold{B}^+_{\mathrm{dR}}\hat{\otimes}_{K,\sigma}R \rightarrow U_{i,\sigma}
\rightarrow 0$$ for each
$\sigma\in \mathcal{P}$ and $1\leqq i\leqq d$, where 
$U_{i,\sigma}$ is a Banach $R$-module which is a direct summand of an orthonormalizable 
Banach $R$-module.
\begin{prop}\label{1.2}
In the above situation, the following hold.
\begin{itemize}
\item[(1)]For any $1\leqq i\leqq d$ and $\sigma\in \mathcal{P}$, the natural injection 
$$(\bold{B}^+_{\mathrm{max},K}\hat{\otimes}_{K,\sigma}(M_i\otimes_{\mathcal{O}_X}R))^{G_K, \varphi_K=Y_i}\hookrightarrow 
(\bold{B}^+_{\mathrm{dR}}/t^k\bold{B}^+_{\mathrm{dR}}\hat{\otimes}_{K,\sigma} (M_i\otimes_{\mathcal{O}_X}R))^{G_K}$$
is an isomorphism.
\item[(2)]For $1\leqq i\leqq d$ and $\sigma\in \mathcal{P}$, let $H_{i,\sigma}\subseteq R$ be the smallest ideal such that 
any $G_K$-equivariant $R$-linear map $$h:M_i^{\vee}\rightarrow \bold{B}^+_{\mathrm{dR}}/t^k\bold{B}^+_{\mathrm{dR}}\hat{\otimes}_{K,\sigma}
R$$ factors through $$\bold{B}^+_{\mathrm{dR}}/t^k\bold{B}^+_{\mathrm{dR}}\hat{\otimes}_{K,\sigma} H_{i,\sigma}\hookrightarrow 
\bold{B}^+_{\mathrm{dR}}/t^k\bold{B}^+_{\mathrm{dR}}\hat{\otimes}_{K,\sigma}R.$$ Set $H:=\prod_{1\leqq d,\sigma\in \mathcal{P}}H_{i,\sigma}\subseteq R$. Then $\mathrm{Spm}(R)\setminus V(H)$ and $\mathrm{Spm}(R)\setminus V(H_{i,\sigma})$ are scheme theoretically 
dense in $\mathrm{Spm}(R)$.
\item[(3)]For any $x\in \mathrm{Spm}(R)$, 
$(\bold{B}^+_{\mathrm{max},K}\otimes_{K,\sigma}M_i(x))^{G_K,\varphi_K=Y_i}$ is non zero for any $1\leqq i\leqq d$ and $\sigma\in \mathcal{P}$.
\end{itemize}

\end{prop}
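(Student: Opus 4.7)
The proof will follow the template of Proposition 5.14 of \cite{Ki03} and Proposition 3.14 of \cite{Na10}, adapted to the present setting with the family of pairs $(M_i, Y_i)_{1 \leqq i \leqq d}$. I will treat the three parts in the order (1), (3), (2), since (1) is the technical core and the remaining two assertions are deductions from it.

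For (1), the injection is immediate: tensoring the given short exact sequence with the finite free $R$-module $M_i \otimes_{\mathcal{O}_X} R$ preserves exactness, and $G_K$-invariants is left-exact. For the surjectivity, I reinterpret a $G_K$-invariant of the right-hand side as a $G_K$-equivariant $R$-linear map
$$h \colon M_i^{\vee} \longrightarrow \bold{B}^+_{\mathrm{dR}}/t^k\bold{B}^+_{\mathrm{dR}} \hat{\otimes}_{K,\sigma} R,$$
and the goal becomes to show that the composition of $h$ with the quotient to $U_{i,\sigma} \hat{\otimes}_{E'}(R \otimes_E E')$ vanishes. This is precisely where the hypothesis $U \subseteq X_{fs}$ is used: the defining ideal of $X_{fs}$ from the proof of Theorem \ref{1.1}, generated by the matrix coefficients of the twisted maps $h_x$ (obtained from $h$ by multiplication by $P(x, Y_i/(\sigma(\pi_K)\lambda_i))$ followed by the quotient to $U_{i,\sigma}$) as $x \in \widetilde{\bold{E}}^+$ with $v(x) > 0$ and $\lambda_i$ vary, is zero in $R$. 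Varying $x$ over a dense test set allows one to invert the action of the $P(x,\cdot)$ and recover the vanishing of the untwisted projection of $h$ to $U_{i,\sigma}$, following the convergence-and-density argument of \cite{Ki03} and \cite{Na10}.

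For (3), I specialize the isomorphism in (1) to the fiber at the closed point $x \in \mathrm{Spm}(R)$, which reduces the claim to nonvanishing of $(\bold{B}^+_{\mathrm{dR}}/t^k\bold{B}^+_{\mathrm{dR}} \otimes_{K,\sigma} M_i(x))^{G_K}$. The hypothesis $P_{M_i,\sigma}(T) = T Q_{i,\sigma}(T)$ implies that $0$ is a root of the $\sigma$-part of the Sen polynomial of $M_i(x)$; by Sen theory, and after choosing $k$ large enough so that the filtration of $\bold{B}^+_{\mathrm{dR}}/t^k$ by powers of $t$ has enough room, this produces the required nonzero $G_K$-invariant vector, and (1) at the point $x$ transports it to a nonzero element of the $\varphi_K = Y_i$ eigenspace. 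For (2), I combine (3) with the scheme theoretic density of $X_{fs,Q_{i,\sigma}(j)}$ in $X_{fs}$ for $j \in \mathbb{Z}_{\leqq 0}$ from Theorem \ref{1.1}(1): on the Zariski dense open of $U$ where every $Q_{i,\sigma}(j)$ is invertible, the pointwise nonvanishing coming from (3), together with a coherence argument using the finite freeness of $M_i$, produces globally defined $G_K$-equivariant maps whose image ideal is not contained in $\mathfrak{m}_x$ at those points; hence $H_{i,\sigma} \not\subseteq \mathfrak{m}_x$ on a Zariski dense subset, so $V(H_{i,\sigma})$ is nowhere dense in $\mathrm{Spm}(R)$, and the same then holds for $V(H) = \bigcup_{i,\sigma} V(H_{i,\sigma})$.

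The principal obstacle is the surjectivity step in (1). Condition (2)(ii) of Theorem \ref{1.1} is stated for maps into $\bold{B}^+_{\mathrm{dR}} \hat{\otimes}_{\mathbb{Q}_p} R$ rather than for the mod-$t^k$, $\sigma$-component version that appears here, and the explicit construction of $X_{fs}$ only directly controls the twisted maps $h_x$. Recovering vanishing of the direct projection of $h$ from the vanishing of all $h_x$ requires a careful density-and-convergence argument on the test parameters $x$, parallel to the two-dimensional case treated by \cite{Ki03} and \cite{Na10}. Once (1) is established, parts (3) and (2) follow from Sen theory and standard scheme theoretic density arguments, so the weight of the proof lies entirely in this surjectivity.
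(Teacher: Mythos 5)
The proposal takes a genuinely different route from the paper, and the reordering creates a real gap.

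The paper's proof of (1) does not re-enter the construction of the defining ideal. It invokes the defining property (2)(ii) of Theorem \ref{1.1} at the level of Artin local rings: for any $x$ with $Q_{i,\sigma}(j)(x)\not=0$ for all $i,\sigma,j\in\mathbb{Z}_{\leqq 0}$, and any $n\geqq 1$, the map $\mathrm{Spm}(R_x/\mathfrak{m}_x^n)\rightarrow X$ is $Y_i$-small and factors through $X_{Q_{i,\sigma}(j)}$, so the Artin-local version of the isomorphism in (1) holds over $R_x/\mathfrak{m}_x^n$. To globalize, one shows $R\hookrightarrow \prod_{x\in V,n}R_x/\mathfrak{m}_x^n$ is injective, which follows from the scheme-theoretic density guaranteed by Theorem \ref{1.1}(1) plus Lemma 5.7 of \cite{Ki03}. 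Your approach of extracting the conclusion directly from the vanishing of the twisted projections $h_x$ is not developed far enough to count as a proof; you acknowledge this yourself, but the obstacle you flag is exactly the one the paper's Artin-local reduction is designed to avoid, so the assessment of where the difficulty lies is misaligned with the actual argument.

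The larger problem is the order (1)$\Rightarrow$(3)$\Rightarrow$(2). You derive (3) by ``specializing the isomorphism in (1) to the fiber at $x$,'' but $G_K$-invariants do not commute with arbitrary base change $R\rightarrow E(x)$, and the Sen-theoretic claim that $0$ being a root of $P_{M_i,\sigma}(T)(x)$ forces $(\bold{B}^+_{\mathrm{dR}}/t^k\bold{B}^+_{\mathrm{dR}}\otimes_{K,\sigma}M_i(x))^{G_K}\not=0$ is false in general: it needs the extra hypothesis $Q_{i,\sigma}(j)(x)\not=0$ for $j\in\mathbb{Z}_{\leqq 0}$ (this is what Corollary 2.6 of \cite{Ki03} requires), which only holds on the dense open $V$, whereas statement (3) is claimed for all $x\in\mathrm{Spm}(R)$. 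The paper instead proves (2) first --- showing $x\in V$ implies $x\not\in V(H_{i,\sigma})$ by comparing the one-dimensional space $(\bold{B}^+_{\mathrm{dR}}/t^k\bold{B}^+_{\mathrm{dR}}\otimes_{K,\sigma}(M_i\otimes R_x/\mathfrak{m}_x))^{G_K}$ (nonzero via Corollary 2.6) with the fiber at $x$, which would be zero if $H_{i,\sigma}\subseteq\mathfrak{m}_x$ --- and then deduces (3) from (2) by the argument of Proposition 3.14 of \cite{Na10}, which handles the points outside $V$ precisely because $H_{i,\sigma}$ is a nonzero ideal. Deriving (2) from (3) as you propose leaves the case $x\not\in V$ of (3) unproved, and the whole cycle has a hole.

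Two further small inaccuracies: in (2), $V(H)=V\!\left(\prod_{i,\sigma}H_{i,\sigma}\right)=\bigcup_{i,\sigma}V(H_{i,\sigma})$, so density of $\mathrm{Spm}(R)\setminus V(H_{i,\sigma})$ for each $(i,\sigma)$ is weaker than density of their intersection $\mathrm{Spm}(R)\setminus V(H)$; the paper proves the stronger statement first by producing a single $Q$ with $\mathrm{Spm}(R)_Q\subseteq\mathrm{Spm}(R)\setminus V(H)$. And ``$V(H_{i,\sigma})$ is nowhere dense'' is weaker than ``the complement is scheme-theoretically dense'' when $R$ is not reduced; the argument has to be carried out at the level of ideals, not just point sets.
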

\begin{proof}
The proof is essentially  the same as that of Proposition 3.14 of  \cite{Na10}.
First, we prove (1).  By the definition of $X_{fs}$, we have an equality 
$$(\bold{B}^+_{\mathrm{max},K}\otimes_{K,\sigma}(M_i\otimes_{\mathcal{O}_X} R_{x}/\mathfrak{m}_x^n))^{G_K, 
\varphi_K=Y_i}=(\bold{B}^+_{\mathrm{dR}}/t^k\bold{B}^+_{\mathrm{dR}}\otimes_{K,\sigma}(M_i\otimes_{\mathcal{O}_X}R_x/\mathfrak{m}_x^n))^{G_K}$$ for 
any $x\in \mathrm{Spm}(R)$ and $n\geqq 1$ such that $Q_{i,\sigma}(j)(x)\not=0$ for any $\sigma\in \mathcal{P}$, $1\leqq i\leqq d$ and 
$j\in \mathbb{Z}_{\leqq 0}$, where $R_x$ is the local ring at $x$. Hence, it suffices to show that the natural map 
$R\rightarrow \prod_{x\in V, n\geqq 1}R_x/\mathfrak{m}_x^n$ is an injection, where we define
$$V:=\{x\in \mathrm{Spm}(R)| 
Q_{i,\sigma}(j)(x)\not=0 \text{ for any } \sigma\in\mathcal{P}, 1\leqq i\leqq d, j\in \mathbb{Z}_{\leqq 0} \}.$$ Let $f\in R$ be an element in the kernel of this map. For $Q\in R$, we denote by $V(Q)$ the reduced closed subspace of $\mathrm{Spm}(R)$ such that $V(Q)=\{x\in \mathrm{Spm}(R)| Q(x)=0\}$. 
If we denote by $W$ the support of $f$ in $\mathrm{Spm}(R)$, then we have an inclusion 
$W\subseteq  \cup_{\sigma\in \mathcal{P}, 
1\leqq i\leqq d, j\in\mathbb{Z}_{\leqq 0}}V(Q_{i,\sigma}(j))$. 
By Lemma 5.7 of \cite{Ki03}, there exists $Q$ which is a finite product of 
$Q_{i,\sigma}(j)$ such that $W\subseteq V(Q)$, hence we obtain an inclusion 
$\mathrm{Spm}(R)_{Q}\subseteq X\setminus W$, in particular 
$f$ is zero in $R[\frac{1}{Q}]$. Because $\mathrm{Spm}(R)_Q$ is scheme theoretically dense in $\mathrm{Spm}(R)$,  we have $f=0$, 
which proves (1).

Next, we prove (2). We first show that if $x\in V$ then $x\in \mathrm{Spm}(R)\setminus V(H_{i,\sigma})$ for any $1\leqq i\leqq d$ and $\sigma\in \mathcal{P}$ and $x\in \mathrm{Spm}(R)\setminus V(H)$. 
If $x\in V\cap V(H_{i,\sigma})$ for some $1\leqq i\leqq d$ and $\sigma\in \mathcal{P}$, then we have an equality 
$$(\bold{B}^+_{\mathrm{dR}}/t^k\bold{B}^+_{\mathrm{dR}}\otimes_{K,\sigma}
(M_i\otimes_{\mathcal{O}_X}R))^{G_K}\otimes_R R_x/\mathfrak{m}_x=(\bold{B}^+_{\mathrm{dR}}/t^k\bold{B}^+_{\mathrm{dR}}\otimes_{K,\sigma}
(M_i\otimes_{\mathcal{O}_X}R_x/\mathfrak{m}_x))^{G_K}$$ which is a one dimensional $E(x)$-vector space by Corollary 2.6 of \cite{Ki03}. However, the left hand side is zero because we have 
$H_{i,\sigma}\subseteq \mathfrak{m}_x$, and this is a contradiction. 
Hence, by the same argument as in the proof of (1), there exists $Q$ which is a finite product of $Q_{i,\sigma}(j)$ such that 
$\mathrm{Spm}(R)_Q\subseteq \mathrm{Spm}(R)\setminus V(H)$. Because $\mathrm{Spm}(R)_Q$ is 
scheme theoretically dense  in $\mathrm{Spm}(R)$ by the definition of $X_{fs}$, this inclusion implies that $\mathrm{Spm}(R)\setminus V(H)$ is also scheme 
theoretically dense in $\mathrm{Spm}(R)$, and then $\mathrm{Spm}(R)\setminus V(H_{i,\sigma})$ are also scheme theoretically dense for all $i,\sigma$.

Using (2), we can prove (3) in the same way as that of Proposition 3.14 of \cite{Na10}. 
\end{proof}

\subsection{construction of the local eigenvariety}
As a generalization of $\S10$ of \cite{Ki03} and $\S3$ of \cite{Na10} to the 
higher dimensional case, we apply Theorem \ref{1.1} to the following situation.
Let $\overline{V}$ be an $\mathbb{F}$-representation 
of $G_K$ of dimension $d$. Let $\mathcal{C}_{\mathcal{O}}$ be the category of Artin local 
$\mathcal{O}$-algebra $A$ with residue field $\mathbb{F}$. 
For $A\in \mathcal{C}_{\mathcal{O}}$, we say that a couple $(V_A,\psi_A)$ is a deformation of $\overline{V}$ over $A$
if $V_A$ is an $A$-representation of $G_K$ and $\psi_A: V_A\otimes_A \mathbb{F}\isom \overline{V}$ is 
an isomorphism of $\mathbb{F}$-representations. We say that two deformations $(V_A,\psi_A)$ 
and $(V'_A,\psi'_A)$ of $\overline{V}$ over $A$ are equivalent if there exists an isomorphism 
$f:V_A\isom V'_A$ of $A$-representations such that $\psi_A=\psi'_A\circ (f\otimes\mathrm{id}_{\mathbb{F}})$.
We consider a functor $$D_{\overline{V}}:\mathcal{C}_{\mathcal{O}}\rightarrow (Sets)$$ defined 
by $$D_{\overline{V}}(A):=\{\text{ equivalent classes of deformations of }\overline{V} \text{ over } A\}$$ 
 for $A\in \mathcal{C}_{\mathcal{O}}$. We simply write $[V_A]\in D_{\overline{V}}(A)$ instead of $[(V_A,\psi_A)]$ if there is no risk of confusing about $\psi_A$.
In this paper, for simplicity, we assume that $\overline{V}$ satisfies 
$$\mathrm{End}_{\mathbb{F}[G_K]}(\overline{V})=\mathbb{F}.$$ 
Under this assumption, the functor $D_{\overline{V}}$ is pro-representable by the universal deformation 
ring $R_{\overline{V}}$. Let $V^{\mathrm{univ}}$ be the universal deformation of $\overline{V}$ over $R_{\overline{V}}$.
Let $\mathcal{X}_{\overline{V}}$ be the rigid analytic space over $E$ associated to the formal $\mathcal{O}$-scheme $\mathrm{Spf}(R_{\overline{V}})$. 
Then, $V^{\mathrm{univ}}$ naturally defines a finite free $\mathcal{O}_{\mathcal{X}_{\overline{V}}}$-module $M$ of rank $d$ with 
a continuous $\mathcal{O}_{\mathcal{X}_{\overline{V}}}$-linear $G_K$-action. 
Let 
$$P_{M}(T):=(P_{M,\sigma}(T))_{\sigma\in \mathcal{P}}
\in K\otimes_{\mathbb{Q}_p}\mathcal{O}_{\mathcal{X}_{\overline{V}}}[T]\isom \oplus_{\sigma\in \mathcal{P}}\mathcal{O}_{\mathcal{X}_{\overline{V}}}[T]$$ be  Sen's polynomial of $M$.

Let $\mathrm{det}(M):G_K^{\mathrm{ab}}\rightarrow \mathcal{O}_{\mathcal{X}_{\overline{V}}}^{\times}$ be the determinant character 
of $M$. 
We also write by the same letter $\mathrm{det}(M):K^{\times}\rightarrow 
\mathcal{O}_{\mathcal{X}_{\overline{V}}}^{\times}$ the continuous homomorphism defined as $\mathrm{det}(M)\circ \mathrm{rec}_K$.

We next recall the definitions of the weight spaces for $\mathcal{O}_K^{\times}$ and $K^{\times}$. Let $\mathcal{W}$ and $\mathcal{T}$ be 
the functors from the category of rigid analytic spaces over $E$ to the category of abelian groups defined 
by
$$\mathcal{W}(X'):=\{\delta:\mathcal{O}_K^{\times}\rightarrow \Gamma(X', \mathcal{O}_{X'})^{\times}
|\delta \text{ is a continuous homomorphism} \}$$ and $$\mathcal{T}(X'):=\{\delta:K^{\times}\rightarrow \Gamma(X', \mathcal{O}_{X'})^{\times}| 
\delta\text{ is a continuous homomorphism}\}$$ 
for each rigid analytic space $X'$ over $E$. It is known that $\mathcal{W}$ and $\mathcal{T}$ are representable and 
$\mathcal{W}$ is representable
by the rigid analytic group variety associated to the Iwasawa algebra $\mathcal{O}[[\mathcal{O}_K^{\times}]]$. As a rigid space over $E$, $\mathcal{W}$ is non-canonically isomorphic to 
$\sharp(\mathcal{O}^{\times}_{K, \mathrm{tor}})$-union of 
$d$-dimensional open unit discs. If we fix a uniformizer $\pi_K\in \mathcal{O}_K$, we have an 
isomorphism $$\mathcal{T}\isom \mathcal{W}\times_E \mathbb{G}_{m/E}^{\mathrm{an}}:\delta\mapsto ( \delta|_{\mathcal{O}_K^{\times}},\delta(\pi_K)).$$ We denote the projections by $$p_{1}:\mathcal{T}\rightarrow \mathcal{W}:\delta\mapsto \delta|_{\mathcal{O}_K^{\times}}, 
\,p_{2,\pi_K}:\mathcal{T}\rightarrow \mathbb{G}_{m/E}^{\mathrm{an}}:\delta\mapsto \delta(\pi_K).$$ Let $Y\in \Gamma(\mathbb{G}_{m/E}^{\mathrm{an}},\mathcal{O}_{\mathbb{G}_{m/E}^{\mathrm{an}}})^{\times}$ be the canonical coordinate.
Let $$\delta_{\mathcal{W}}^{\mathrm{univ}}:\mathcal{O}_K^{\times}\rightarrow \Gamma(\mathcal{W}, \mathcal{O}_{\mathcal{W}})^{\times}$$ be 
the universal homomorphism of the functor $\mathcal{W}$, which is equal to the composite of the canonical map $\mathcal{O}_K^{\times}\rightarrow \mathcal{O}[[\mathcal{O}_K^{\times}]]^{\times}: a\mapsto [a]$ with the canonical map 
$\mathcal{O}[[\mathcal{O}_K^{\times}]]^{\times}\rightarrow \Gamma(\mathcal{W}, \mathcal{O}_{\mathcal{W}})^{\times}$.
Let 
$$\widetilde{\delta}_{\mathcal{W}}^{\mathrm{univ}}:G_K^{\mathrm{ab}}\rightarrow \Gamma(\mathcal{W},\mathcal{O}_{\mathcal{W}})^{\times}$$ be the continuous character defined by 
$$\widetilde{\delta}_{\mathcal{W}}^{\mathrm{univ}}\circ \mathrm{rec}_K|_{\mathcal{O}_K^{\times}}=\delta_{\mathcal{W}}^{\mathrm{univ}}
\text{ and  }\widetilde{\delta}^{\mathrm{univ}}_{\mathcal{W}}(\mathrm{rec}_K(\pi_K))=1.$$
Then the universal homomorphism $$\delta^{\mathrm{univ}}_{\mathcal{T}}:K^{\times}\rightarrow \Gamma(\mathcal{T},\mathcal{O}_{\mathcal{T}})^{\times}$$ of the functor $\mathcal{T}$ satisfies 
$$\delta^{\mathrm{univ}}_{\mathcal{T}}|_{\mathcal{O}_K^{\times}}=
p^*_1\circ\delta^{\mathrm{univ}}_{\mathcal{W}}\text{ and } \delta^{\mathrm{univ}}_{\mathcal{T}}(\pi_K)=p_{2,\pi_K}^*(Y).$$

We define the notion of the generalized Hodge-Tate weights for any $\delta\in \mathcal{W}(X')$. 
For any rigid space $X'$ over $E$, any continuous homomorphism $\delta:\mathcal{O}_K^{\times}
\rightarrow \Gamma(X',\mathcal{O}_{X'})^{\times}$ is locally $\mathbb{Q}_p$-analytic by Proposition 8.3 of \cite{Bu07}, i.e. 
locally around $1\in\mathcal{O}^{\times}_K$, $\delta$ can be written by 
$$\delta(x)=\sum_{\bold{n}=\{n_{\sigma}\}_{\sigma\in\mathcal{P}},n_{\sigma}\geqq 0}a_{\bold{n}}\prod_{\sigma\in\mathcal{P}}\sigma(x-1)^{n_{\sigma}}$$
for a unique $\{a_{\bold{n}}\}_{\bold{n}}\subseteq \Gamma(X',\mathcal{O}_{X'})$. Then, for any 
$\sigma\in \mathcal{P}$, we define the $\sigma$-part of the generalized Hodge-Tate wight $k(\delta)_{\sigma}$ of $\delta$ as the partial 
differential $\frac{\partial\delta(x)}{\partial \sigma(x)}|_{x=1}$ of $\delta$ by $\sigma(x)$ at $1$, more precisely, we define 
$$k(\delta)_{\sigma}:=a_{\bold{n}}\text{ for } \bold{n}=\{n_{\sigma'}\}_{\sigma'\in\mathcal{P}} \text{ such that } n_{\sigma}=1\text{ and }
n_{\sigma'}=0  \,\,(\sigma'\not=\sigma).$$
 
Here, we prove a proposition concerning the generalized Hodge-Tate weights of rank one 
representations of $G_K$, which justifies the above definition. We recall that  $\chi_{\mathrm{LT}}:G_K^{\mathrm{ab}}\rightarrow \mathcal{O}_K^{\times}$ is 
the Lubin-Tate character associated to a fixed uniformizer $\pi_K\in \mathcal{O}_K$. 




\begin{prop}\label{a-3}
Let $X'$ be a rigid space over $E$, and let $\widetilde{\delta}:G_K^{\mathrm{ab}}\rightarrow \Gamma(X',\mathcal{O}_{X'})^{\times}$ be a continuous 
character. Set $\delta:=\widetilde{\delta}\circ \mathrm{rec}_K|_{\mathcal{O}_K^{\times}}\in \mathcal{W}(X')$. 
Then, the $\sigma$-part of the generalized Hodge-Tate weight of $\mathcal{O}_{X'}(\widetilde{\delta})$ is equal to 
$k(\delta)_{\sigma}$. 
\end{prop}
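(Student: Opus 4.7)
The plan is to reduce the statement first to the universal character on $\mathcal{W}$, and then to verify the equality on a Zariski-dense subset where both sides are computable by hand.

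First I would normalise by twisting away the value on $\mathrm{rec}_K(\pi_K)$. Put $c:=\widetilde{\delta}(\mathrm{rec}_K(\pi_K))$ and let $\eta:G_K^{\mathrm{ab}}\to\Gamma(X',\mathcal{O}_{X'})^{\times}$ be the unramified character sending $\mathrm{rec}_K(\pi_K)$ to $c$. Then $\widetilde{\delta}\eta^{-1}$ still extends $\delta$ along $\mathrm{rec}_K|_{\mathcal{O}_K^{\times}}$, satisfies $(\widetilde{\delta}\eta^{-1})(\mathrm{rec}_K(\pi_K))=1$, and shares the same generalized Hodge--Tate weights as $\widetilde{\delta}$, since $\eta$ is unramified and hence has vanishing Hodge--Tate weights in every embedding. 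So we may assume $\widetilde{\delta}$ is the canonical lift of $\delta$ in the sense of the paper, and is then the pullback of $\widetilde{\delta}_{\mathcal{W}}^{\mathrm{univ}}$ along the classifying map $f:X'\to\mathcal{W}$ attached to $\delta$.

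Both sides of the claimed equality commute with the pullback $f^{*}$, and each can be regarded as a global analytic function on $\mathcal{W}$: the function $\delta\mapsto k(\delta)_{\sigma}$ by construction, and the $\sigma$-part of the Hodge--Tate--Sen weight via family Sen theory applied to the rank-one $G_K$-module $M:=\mathcal{O}_{\mathcal{W}}(\widetilde{\delta}_{\mathcal{W}}^{\mathrm{univ}})$, where it is read off from the $\sigma$-component of Sen's polynomial $P_{M,\sigma}(T)\in\Gamma(\mathcal{W},\mathcal{O}_{\mathcal{W}})[T]$. It therefore suffices to check the equality for $\widetilde{\delta}_{\mathcal{W}}^{\mathrm{univ}}$ on any Zariski-dense subset of $\mathcal{W}$, and the algebraic characters $\delta=\prod_{\sigma'\in\mathcal{P}}(\sigma'|_{\mathcal{O}_K^{\times}})^{n_{\sigma'}}$ with $\{n_{\sigma'}\}\in\prod_{\sigma'\in\mathcal{P}}\mathbb{Z}$ form such a subset.

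For these algebraic characters the check is a direct computation. The Taylor expansion of $\delta(x)=\prod_{\sigma'}\sigma'(x)^{n_{\sigma'}}$ about $x=1$ gives $k(\delta)_{\sigma}=n_{\sigma}$. On the Galois side, using $\chi_{\mathrm{LT}}\circ\mathrm{rec}_K|_{\mathcal{O}_K^{\times}}=\mathrm{id}_{\mathcal{O}_K^{\times}}$ and $\chi_{\mathrm{LT}}(\mathrm{rec}_K(\pi_K))=1$, uniqueness of the canonical lift forces $\widetilde{\delta}=\prod_{\sigma'\in\mathcal{P}}(\sigma'\circ\chi_{\mathrm{LT}})^{n_{\sigma'}}$, and the standard Hodge--Tate computation for Lubin--Tate characters gives $\sigma$-weight $n_{\sigma}$ for this product. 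Hence the two analytic functions coincide on a Zariski-dense subset of $\mathcal{W}$ and therefore everywhere.

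The main obstacle I anticipate is the family-theoretic ingredient, namely making precise that the $\sigma$-part of the generalized Hodge--Tate weight of a rank-one representation varies as a global analytic function compatible with pullback. This amounts to identifying it with (minus) the constant term of $P_{M,\sigma}(T)$ for the universal rank-one module, which follows from Sen theory in rigid analytic families; once it is in place, the density argument and the Lubin--Tate computation close the proof with no further work.
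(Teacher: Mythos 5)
Your proof is correct and follows essentially the same route as the paper: normalise by an unramified twist so that $\widetilde{\delta}(\mathrm{rec}_K(\pi_K))=1$, reduce by universality and compatibility with base change to $\widetilde{\delta}_{\mathcal{W}}^{\mathrm{univ}}$, observe that both sides are global sections of $\mathcal{O}_{\mathcal{W}}$ (via the $\sigma$-component of Sen's polynomial, which the paper sets up beforehand), verify the equality on the Zariski-dense locus of algebraic characters $\prod_{\sigma'}\sigma'^{n_{\sigma'}}$ using the Lubin--Tate computation, and conclude by density.
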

\begin{proof}
Let $\widetilde{\delta}:G_K^{\mathrm{ab}}\rightarrow \Gamma(X',\mathcal{O}_{X'})^{\times}$ be a continuous character. Because 
twisting by a unramified character does not  change the Hodge-Tate weights, we may assume that 
$\widetilde{\delta}(\mathrm{rec}_K(\pi_K))=1$. By the universality of $\mathcal{W}$, there exists a morphism 
$f:\mathrm{Spm}(A)\rightarrow \mathcal{W}$ such that $\delta=f^*\circ\delta^{\mathrm{univ}}_{\mathcal{W}}$, which also implies the equality $\widetilde{\delta}=f^{*}\circ\widetilde{\delta}^{\mathrm{univ}}_{\mathcal{W}}$.
Because both $k(\delta)_{\sigma}$ and the Hodge-Tate weights of $\widetilde{\delta}$ are compatible with 
base changes, it suffices to show that the $\sigma$-part of 
the generalized Hodge-Tate weight of $\widetilde{\delta}_{\mathcal{W}}^{\mathrm{univ}}$ is equal to 
$k(\delta^{\mathrm{univ}}_{\mathcal{W}})_{\sigma}$ for each $\sigma\in\mathcal{P}$. 
We denote by $a_{\sigma}\in \Gamma(\mathcal{W}, \mathcal{O}_{\mathcal{W}})$ 
the $\sigma$-part of the generalized Hodge-Tate weight of 
$\widetilde{\delta}_{\mathcal{W}}^{\mathrm{univ}}$. Define a subset 
$$\mathcal{W}_{0}:=\{\prod_{\sigma\in \mathcal{P}}\sigma^{k_{\sigma}}:\mathcal{O}_K^{\times}
\rightarrow E^{\times}| k_{\sigma} \in \mathbb{Z} \text{ for any }\sigma\in\mathcal{P}\}\subseteq \mathcal{W}.$$
 Then
$a_{\sigma}$ is equal to $k(\delta^{\mathrm{univ}}_{\mathcal{W}})_{\sigma}$ at any points of $\mathcal{W}_0$ because the character $\widetilde{(\prod_{\sigma\in\mathcal{P}}\sigma^{k_{\sigma}})}
:G_K^{\mathrm{ab}}\rightarrow E^{\times}$ which is equal to $\prod_{\sigma\in \mathcal{P}}\sigma(\chi_{\mathrm{LT}})^{k_{\sigma}}$ is a crystalline character 
with the generalized Hodge-Tate weights $\{k_{\sigma}\}_{\sigma\in \mathcal{P}}$ by the result of Fontaine. Since the subset $\mathcal{W}_0$ is Zariski 
dense in $\mathcal{W}$ by Lemma 2.7 of \cite{Ch09}, $a_{\sigma}$ is equal to $k(\delta^{\mathrm{univ}}_{\mathcal{W}})_{\sigma}$ on $\mathcal{W}$, which proves the proposition.

\end{proof}

Set $\mathcal{Z}:=\mathcal{X}_{\overline{V}}\times_E\mathcal{T}^{\times (d-1)}$. Any point $z\in\mathcal{Z}$ can be written as $z=(x, \delta_1,\cdots, \delta_{d-1})$ for $x\in \mathcal{X}_{\overline{V}}$ and $\delta_i:K^{\times}\rightarrow E(z)^{\times}.$ Let 
$$p:\mathcal{Z}\rightarrow \mathcal{X}_{\overline{V}}: (x,\delta_1,\cdots,\delta_{d-1})\mapsto x, $$ and, for $1\leqq i\leqq d-1$,  
$$q_i:\mathcal{Z}\rightarrow \mathcal{T}:(x,\delta_1,\cdots,\delta_{d-1})\mapsto \delta_i$$ be the projections. 
We set $N:=p^{*}(M)$, $P_{N,\sigma}(T):=p^*(P_{M,\sigma}(T))\in \Gamma(\mathcal{Z}, \mathcal{O}_{\mathcal{Z}})[T]$ and 
$$\delta_i^{\mathrm{univ}}:K^{\times} 
\xrightarrow{\delta_{\mathcal{T}}^{\mathrm{univ}}} \Gamma(\mathcal{T}, \mathcal{O}_{\mathcal{T}})^{\times}\xrightarrow{q_i^*}  \Gamma(\mathcal{Z}, \mathcal{O}_{\mathcal{Z}})^{\times}$$  and
$$Y_i:=q_i^*(\delta^{\mathrm{univ}}_{\mathcal{T}}(\pi_K))\in \Gamma(\mathcal{Z}, \mathcal{O}_{\mathcal{Z}})^{\times}.$$ 
For $i=d$, we define 
$$\delta_d^{\mathrm{univ}}:=(\mathrm{det}(N)\circ\mathrm{rec}_K)/\delta^{\mathrm{univ}}_{[d-1]}:K^{\times}\rightarrow \Gamma(\mathcal{Z},\mathcal{O}_{\mathcal{Z}})^{\times},$$ 
where we set $\delta^{\mathrm{univ}}_{[d-1]}:=\prod_{i=1}^{d-1}\delta^{\mathrm{univ}}_{i}$.
 For any $1\leqq i\leqq d-1$, we define a continuous character $$\widetilde{\delta}^{\mathrm{univ}}_i:G_K^{\mathrm{ab}}\rightarrow \Gamma(\mathcal{Z}, \mathcal{O}_{\mathcal{Z}})^{\times}$$ such that $$\widetilde{\delta}^{\mathrm{univ}}_i\circ \mathrm{rec}_K|_{\mathcal{O}_K^{\times}}=\delta^{\mathrm{univ}}_i|_{\mathcal{O}_K^{\times}}\text{  and }\widetilde{\delta}_i^{\mathrm{univ}}(\mathrm{rec}_K(\pi_K))=1,$$ 
 and define a unramified homomorphism
 $$\delta_{Y_i}:K^{\times}\rightarrow \Gamma(\mathcal{Z},\mathcal{O}_{\mathcal{Z}})^{\times}$$ 
 such that $\delta_{Y_i}|_{\mathcal{O}_K^{\times}}=1$ and $\delta_{Y_i}(\pi_K)=Y_i$.

Under these notations, we define a Zariski closed subspace 
$$\mathcal{Z}_0\subseteq \mathcal{Z}$$ the largest Zariski closed subspace such that the equality $$P_{N,\sigma}(T)=\prod_{i=1}^{d}(T-k(\delta^{\mathrm{univ}}_i)_{\sigma}) $$ holds on $\mathcal{Z}_0$ 
for any $\sigma\in \mathcal{P}$, i.e. if we denote $$P_{N,\sigma}(T)-
\prod_{i=1}^{d}(T-k(\delta^{\mathrm{univ}}_i)_{\sigma}) :=a_{d-1,\sigma}T^{d-1}+\cdots+a_{0,\sigma}\in 
\Gamma(\mathcal{Z},\mathcal{O}_{\mathcal{Z}})[T],$$ then $\mathcal{Z}_0$ is defined 
by the ideal generated by $\{a_{i,\sigma}\}_{0\leqq i\leqq d-1,\sigma\in \mathcal{P}}$. For any $1\leqq i\leqq d-1$, let $\wedge^iN$ be the $i$-th exterior product of $N$ over $\mathcal{O}_{\mathcal{Z}}$. For 
each $1\leqq i\leqq d-1$, set $\delta^{\mathrm{univ}}_{[i]}:=\prod_{j=1}^i\delta^{\mathrm{univ}}_j$, 
then the $\sigma$-part of Sen's polynomial of $$N_i:=(\wedge^i N
\otimes_{\mathcal{O}_{\mathcal{Z}}} \mathcal{O}_{\mathcal{Z}}(\widetilde{\delta^{\mathrm{univ}}_{[i]}}^{-1}))|_{\mathcal{Z}_0}$$ is written 
by $TQ_{i,\sigma}(T)$ for a monic polynomial $Q_{i,\sigma}(T)\in \mathcal{O}_{\mathcal{Z}_0}[T]$ because 
the $\sigma$-part of the Hodge-Tate weight of $\widetilde{\delta_i^{\mathrm{univ}}}$ is $k(\delta^{\mathrm{univ}}_i)_{\sigma}$ by Proposition \ref{a-3}. Hence, we can apply Theorem \ref{1.1} to this situation, more precisely, we obtain the 
following corollary. Set $Y_{[i]}:=\prod_{j=1}^iY_j$ for $1\leqq i\leqq d-1$.

\begin{corollary}\label{1.3}
Under the above situation, there exists a unique Zariski closed subspace $$\mathcal{E}_{\overline{V}}:=\mathcal{Z}_{0,fs}\subseteq \mathcal{Z}_0$$
 satisfying the following conditions (1) and (2).
\begin{itemize}
\item[(1)]For any $1\leqq i\leqq d-1$, $\sigma\in \mathcal{P}$ and $j\in \mathbb{Z}_{\leqq 0}$, $\mathcal{E}_{\overline{V},Q_{i,\sigma}(j)}$ is 
scheme theoretically dense in $\mathcal{E}_{\overline{V}}$.
\item[(2)] For any $E$-morphism $f:\mathrm{Spm}(R)\rightarrow \mathcal{Z}_0$ which is $Y_i$-small for all 
$1\leqq i\leqq d-1$ and factors through $\mathcal{Z}_{0,Q_{i,\sigma}(j)}$ for any 
$1\leqq i\leqq d-1$, $\sigma\in \mathcal{P}$ and $j\in \mathbb{Z}_{\leqq 0}$, the following conditions (i) and (ii) 
are equivalent.
\begin{itemize}
\item[(i)]$f$ factors through $f:\mathrm{Spm}(R)\rightarrow \mathcal{E}_{\overline{V}}\hookrightarrow \mathcal{Z}_0$.
\item[(ii)]For any $1\leqq i\leqq d-1$, any $R$-linear $G_K$-equivariant map 
$$h:f^*(N_i^{\vee})\rightarrow 
\bold{B}^+_{\mathrm{dR}}\hat{\otimes}_{\mathbb{Q}_p} R$$ factors through the natural inclusion 
$$h:f^*(N_i^{\vee})\rightarrow K\otimes_{K_0}(\bold{B}^+_{\mathrm{max}}\hat{\otimes}_{\mathbb{Q}_p}R)^{\varphi^f=Y_{[i]}}
\hookrightarrow \bold{B}^+_{\mathrm{dR}}\hat{\otimes}_{\mathbb{Q}_p}R.$$
\end{itemize}
\end{itemize}

\end{corollary}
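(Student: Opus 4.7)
The plan is to obtain Corollary \ref{1.3} as a direct specialization of Theorem \ref{1.1}, applied with base rigid space $X := \mathcal{Z}_0$, with the $d-1$ families $M_i := N_i$, and with invertible functions $Y_i := Y_{[i]} = \prod_{j=1}^i Y_j \in \Gamma(\mathcal{Z}_0, \mathcal{O}_{\mathcal{Z}_0})^{\times}$ for $1 \leqq i \leqq d-1$. Two things need verification: that the Sen-polynomial hypothesis of Theorem \ref{1.1} is satisfied by each $N_i$, and that the smallness condition stated in Corollary \ref{1.3}(2) is strong enough to imply the smallness condition needed to invoke Theorem \ref{1.1}(2) with $Y_i$ replaced by $Y_{[i]}$.

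For the Sen-polynomial hypothesis, by the definition of $\mathcal{Z}_0$ the $\sigma$-part of Sen's polynomial of $N|_{\mathcal{Z}_0}$ factors as $\prod_{j=1}^d (T - k(\delta^{\mathrm{univ}}_j)_{\sigma})$. Sen's operator is compatible with tensor products and exterior powers on finite projective families carrying a continuous semi-linear $G_K$-action (this is the standard functoriality used in \cite{Ki03} and \cite{Na10}), so on $\mathcal{Z}_0$ we have
$$P_{\wedge^i N, \sigma}(T) = \prod_{\substack{S \subseteq [d] \\ |S| = i}} \Bigl( T - \sum_{j \in S} k(\delta^{\mathrm{univ}}_j)_{\sigma} \Bigr).$$
By Proposition \ref{a-3}, the $\sigma$-part of the generalized Hodge--Tate weight of the line bundle $\mathcal{O}_{\mathcal{Z}_0}(\widetilde{\delta^{\mathrm{univ}}_{[i]}})$ is $k(\delta^{\mathrm{univ}}_{[i]})_{\sigma} = \sum_{j=1}^i k(\delta^{\mathrm{univ}}_j)_{\sigma}$, so twisting by its inverse shifts the Sen-polynomial roots by $-\sum_{j=1}^i k(\delta^{\mathrm{univ}}_j)_{\sigma}$. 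The subset $S = [i]$ then contributes the factor $T$, while the remaining factors assemble into a monic polynomial $Q_{i,\sigma}(T) \in \mathcal{O}_{\mathcal{Z}_0}[T]$, precisely as already announced in the paragraph preceding the statement.

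For the smallness comparison, suppose $f: \mathrm{Spm}(R) \to \mathcal{Z}_0$ is $Y_i$-small for every $1 \leqq i \leqq d-1$. After enlarging $E$ to a sufficiently large finite extension $E'$ we obtain $\lambda_j \in (R \otimes_E E')^{\times}$ with $E'[\lambda_j]$ finite \'etale over $E'$ and $Y_j/\lambda_j$ topologically nilpotent. Then $\prod_{j=1}^i \lambda_j$ lies in the compositum $E'[\lambda_1, \ldots, \lambda_i]$, which is finite \'etale as a quotient of a tensor product of finite \'etale $E'$-algebras, and $Y_{[i]}/\prod_{j=1}^i \lambda_j = \prod_{j=1}^i (Y_j/\lambda_j)$ is topologically nilpotent as a product of topologically nilpotent elements. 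Hence $f$ is $Y_{[i]}$-small for every $i$, and the equivalence (2)(i)$\Leftrightarrow$(2)(ii) of Theorem \ref{1.1} transcribes directly to (2)(i)$\Leftrightarrow$(2)(ii) of the corollary. Combining the two points, Theorem \ref{1.1} produces the required unique Zariski closed subspace $\mathcal{E}_{\overline{V}} := (\mathcal{Z}_0)_{fs}$. The only nontrivial ingredient is the naturality of Sen's polynomial under $\wedge^i$ and line-bundle twists in families, which I expect to be the mildest obstacle; everything else is a direct translation of Theorem \ref{1.1} into the present notation.
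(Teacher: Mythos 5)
Your strategy coincides with the paper's: Corollary \ref{1.3} is obtained by specializing Theorem \ref{1.1} to $X=\mathcal{Z}_0$, $M_i:=N_i$, $Y_i^{\mathrm{Thm}}:=Y_{[i]}$, and the paper verifies the Sen-polynomial hypothesis in exactly the way you do, in the paragraph preceding the corollary, via the definition of $\mathcal{Z}_0$ and Proposition \ref{a-3}.

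The smallness comparison, however, needs more care. You prove only that $Y_i$-smallness for all $i$ implies $Y_{[i]}$-smallness for all $i$. This yields the existence of a Zariski closed subspace satisfying (1) and (2), but to transfer the \emph{uniqueness} of Theorem \ref{1.1} --- whose condition (2) is quantified over $Y_{[i]}$-small test morphisms --- to the corollary's statement --- quantified over $Y_i$-small test morphisms --- you also need the converse, so that the two test classes and hence the two universal properties coincide. The paper records exactly this ``if and only if'' in the remark immediately following the corollary. Moreover, your intermediate claim that ``$Y_{[i]}/\prod_{j\leqq i}\lambda_j$ is topologically nilpotent as a product of topologically nilpotent elements'' rests on a literal reading of the definition of $Y$-smallness as printed in the paper, under which the notion is actually vacuous (one can always take $\lambda$ to be a constant of large enough norm). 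Comparison with Definition 5.2 of \cite{Ki03} shows the intended condition is that some $(Y/\lambda)^n-1$ be topologically nilpotent, not $Y/\lambda$ itself. With that reading your choice $\lambda_{[i]}:=\prod_{j\leqq i}\lambda_j$ still works --- writing $Y_j/\lambda_j=1+\varepsilon_j$ with $\varepsilon_j$ topologically nilpotent, the product $\prod_j(1+\varepsilon_j)$ is again of the form $1+(\text{topologically nilpotent})$ --- and the converse direction follows in the same way from $Y_i=Y_{[i]}/Y_{[i-1]}$, since inverting $1+\varepsilon$ stays in that form. Once this is patched in, your proposal is complete and agrees with the paper.
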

\begin{rem}
By the definition, we can easily check that $f:\mathrm{Spm}(R)\rightarrow \mathcal{Z}_0$ is $Y_i$-small for any $1\leqq i\leqq d-1$ if and only if 
$f$ is $Y_{[i]}$-small for any $1\leqq i\leqq d-1$.
\end{rem}

Each point $x\in \mathcal{X}_{\overline{V}}$ corresponds to an $E(x)$-representation 
$V_x$ of $G_K$ such that there exists a $G_K$-stable $\mathcal{O}_{E(x)}$-lattice $T_x\subseteq V_x$ which satisfies 
$T_x/\pi_{E(x)}T_x\isom \overline{V}\otimes_{\mathbb{F}}(\mathcal{O}_{E(x)}/\pi_{E(x)}\mathcal{O}_{E(x)})$. We assume that, for 
a finite extension $E'$ of $E(x)$,  $V_x\otimes_{E(x)}E'$ is a split trianguline $E'$-representation with a triangulation 
$$T_x:0\subseteq W_1\subseteq W_2 \subseteq \cdots \subseteq W_d:=W(V_x\otimes_{E(x)}E').$$
 We denote by $\{\delta_i\}_{i=1}^{d}$ the parameter 
of $T_x$, i.e. $\delta_i:K^{\times}\rightarrow E^{' \times}$ satisfies $W_i/W_{i-1}\isom W(\delta_i)$
for any $i$. By Proposition \ref{a-3},  the couple $(V_x,T_x)$ 
defines an $E'$-rational point $$z:=z_{(V_x,T_x)}:=(x, \delta_1,\delta_2,\cdots,\delta_{d-1})\in\mathcal{Z}_0(E').$$ By Galois descent, all these are defined over $E(z)(\subseteq E')$, i.e. 
the $E'$-triangulation $T_x$ descends to an $E(z)(\subseteq E')$-triangulation $T_x$
of $V_x\otimes_{E(x)}E(z)$ with the same parameter. Hence, if we write 
$z:=z_{(V_x,T_x)}\in\mathcal{Z}_0$, then we always assume that $V_x$ is a split trianguline $E(z)$-representation with 
an $E(z)$-triangulation $T_x$.


\begin{prop}\label{1.4}
Let $(V_x,T_x)$ be a couple as above which satisfies the following conditions $($put $z:=z_{(V_x,T_x)}$$)$,
\begin{itemize}
\item[(0)]$\mathrm{End}_{E(z)[G_K]}(V_x)=E(z)$,
\item[(1)]$\delta_j/\delta_i\not=\prod_{\sigma\in \mathcal{P}}\sigma^{k_{\sigma}}$ for any $1\leqq i<j\leqq d$ and $\{k_{\sigma}\}_{\sigma\in \mathcal{P}}\in \prod_{\sigma\in \mathcal{P}}\mathbb{Z}_{\leqq 0}$,
\item[(2)]$\delta_i/\delta_j\not=|N_{K/\mathbb{Q}_p}|_p\prod_{\sigma\in \mathcal{P}}
\sigma^{k_{\sigma}}$ for any $1\leqq i<j\leqq d$ and $\{k_{\sigma}\}_{\sigma\in \mathcal{P}}\in \prod_{\sigma\in \mathcal{P}}\mathbb{Z}_{\geqq 1}$,

\end{itemize}
then the point $z\in\mathcal{Z}_0$ is contained in $\mathcal{E}_{\overline{V}}$.

\end{prop}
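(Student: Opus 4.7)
The plan is to use the trianguline deformation theory to build an affinoid family through $z$ and then invoke the universal property of Corollary~\ref{1.3}. Under conditions (0), (1), (2), Proposition~\ref{4.2} applies, so $D_{V_x, T_x}$ is representable by a ring $R := R_{V_x, T_x}$, formally smooth over $E(z)$ of dimension $N := [K:\mathbb{Q}_p]\frac{d(d+1)}{2} + 1$, carrying a universal trianguline deformation $(V_R, T_R)$ with parameter $\{\delta_i^R\}_{i=1}^d$. Fixing an isomorphism $R \cong E(z)[[t_1, \ldots, t_N]]$, for small $\epsilon > 0$ the affinoid disc $U_\epsilon := \mathrm{Sp}(E(z)\langle t_1/\epsilon, \ldots, t_N/\epsilon\rangle)$ carries the restricted family and thus admits a morphism $f_\epsilon : U_\epsilon \to \mathcal{Z}$ sending the origin to $z$. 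By Proposition~\ref{a-3} applied to each graded piece of $T_R$, the $\sigma$-Sen polynomial of $V_R$ equals $\prod_{i=1}^d (T - k(\delta_i^R)_\sigma)$, so $f_\epsilon$ factors through $\mathcal{Z}_0$.

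Set $U_\epsilon' := U_\epsilon \setminus \bigcup_{i, \sigma, j \in \mathbb{Z}_{\leqq 0}} V(Q_{i,\sigma}(j))$. For each fixed $(i,\sigma)$ only finitely many $j \leqq 0$ satisfy $Q_{i,\sigma}(j)(z) = 0$, and formal smoothness of $R$ ensures none of these $Q_{i,\sigma}(j)$ are identically zero on $U_\epsilon$ (the HT weights of distinct parameter characters vary independently on $R$), so $U_\epsilon'$ is a Zariski dense Zariski open subspace. For $\epsilon$ small enough and $\lambda_i$ chosen in a finite extension $E'$ of $E(z)$ with $|\lambda_i|_p > |Y_i(z)|_p$, the ultrametric inequality gives $|Y_i(x)|_p = |Y_i(z)|_p < |\lambda_i|_p$ for every $x \in U_\epsilon$, so $Y_i/\lambda_i$ is topologically nilpotent and $f_\epsilon|_{U_\epsilon'}$ is $Y_i$-small.

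The crucial step is verifying condition (ii) of Corollary~\ref{1.3}(2) on $U_\epsilon'$. For each $1 \leqq i \leqq d-1$, the triangulation $T_R$ of $W(V_R)$ induces a filtration on $\wedge^i W(V_R)$ whose bottom piece is $\wedge^i W_{R,i} \cong W(\delta_{[i]}^R)$ and whose higher graded pieces are $W(\delta_I^R)$ for subsets $I \subset [d]$ of size $i$ with $I \neq [i]$. Twisting by $\widetilde{\delta}_{[i]}^{R, -1}$, the bottom piece of $N_i^R$ becomes $W(\delta_{Y_{[i]}^R})$, which by the remark following Proposition~\ref{4.1} is crystalline with $\varphi^f = Y_{[i]}^R$ and all generalized HT weights zero; each higher piece becomes a rank-one $B$-pair with $\sigma$-HT weight $k(\delta_I^R)_\sigma - k(\delta_{[i]}^R)_\sigma$. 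By the choice of $U_\epsilon'$ these HT weights avoid $\mathbb{Z}_{\leqq 0}$ pointwise, forcing $\bold{D}^+_{\mathrm{dR}}$ of each higher piece to vanish. Left-exactness of $\bold{D}^+_{\mathrm{dR}}$ along the filtration then yields
$$\bold{D}^+_{\mathrm{dR}}(N_i^R) = \bold{D}^+_{\mathrm{dR}}(W(\delta_{Y_{[i]}^R})) = K \otimes_{K_0} \bold{D}_{\mathrm{cris}}(W(\delta_{Y_{[i]}^R}))^{\varphi^f = Y_{[i]}^R},$$
and dualizing this gives that every $G_K$-equivariant map $h : f_\epsilon^*((N_i^R)^\vee) \to \bold{B}^+_{\mathrm{dR}} \hat\otimes_{\mathbb{Q}_p} \mathcal{O}(U_\epsilon')$ factors through the finite-slope subspace, which is precisely (ii).

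By Corollary~\ref{1.3}(2), $f_\epsilon|_{U_\epsilon'}$ factors through $\mathcal{E}_{\overline{V}}$. Since $\mathcal{E}_{\overline{V}} \subset \mathcal{Z}_0$ is Zariski closed, the preimage $f_\epsilon^{-1}(\mathcal{E}_{\overline{V}})$ is Zariski closed in $U_\epsilon$ and contains the Zariski dense $U_\epsilon'$, hence equals $U_\epsilon$; in particular $z = f_\epsilon(0) \in \mathcal{E}_{\overline{V}}$. The main technical obstacle is pinning down the family version of the vanishing $\bold{D}^+_{\mathrm{dR}}(W(\eta^R)) = 0$ for rank-one $B$-pairs whose generalized HT weights avoid $\mathbb{Z}_{\leqq 0}$ at each point: this reduces, via base change along each $x \in U_\epsilon'$, to the classical pointwise fact, together with Zariski density of integer-weight specializations as in the proof of Proposition~\ref{a-3}.
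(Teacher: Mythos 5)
Your approach has a genuine gap at the very first step, and it is the central one: you assume that the formal universal trianguline deformation over $R := R_{V_x,T_x}$ \emph{analytifies} to an honest affinoid family. Fixing $R \cong E(z)[[t_1,\ldots,t_N]]$ abstractly does not give you a morphism $f_\epsilon : U_\epsilon \to \mathcal{Z}$. What you do have is a surjection $h:\hat{\mathcal{O}}_{\mathcal{Z},z}\twoheadrightarrow R_z\cong R$, i.e.\ a formal closed subscheme $\mathrm{Spf}(R_z)\hookrightarrow\mathrm{Spf}(\hat{\mathcal{O}}_{\mathcal{Z},z})$, and there is no a priori reason this comes from (the formal completion at $z$ of) a closed \emph{analytic} subspace of an affinoid neighbourhood of $z$ in $\mathcal{Z}$; equivalently, the kernel of $h$ need not be generated by elements of the affinoid ring. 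In fact this is exactly the kind of object the finite slope subspace is designed to produce, so the argument as written is circular: you are presuming the existence of the local analytic eigenvariety through $z$ in order to invoke the universal property of Corollary~\ref{1.3}(2). The paper instead works with the Artinian quotients $\mathrm{Spm}(R_z/\mathfrak{m}^n)\rightarrow\mathcal{Z}_0$ for all $n$ (which always make sense) and verifies directly, inside the construction of $X_{fs}$ in the proof of Theorem~\ref{1.1}, that these factor through $\mathcal{E}_{\overline{V}}$; this requires Lemma~\ref{1.5}(2), a surjectivity of inverse limits, rather than the universal property (2) of Corollary~\ref{1.3}, which is only formulated for genuine affinoid morphisms.

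A second, smaller issue: you assert that "formal smoothness of $R$ ensures none of these $Q_{i,\sigma}(j)$ are identically zero... the HT weights of distinct parameter characters vary independently on $R$." This is precisely Lemma~\ref{1.6} in the paper and is not a free consequence of formal smoothness of $R$. It is proved by constructing, for each $J\neq[i]$, a morphism of deformation functors $f_J : D_{V_x,T_x}\rightarrow D_{\delta_{J,0}}$ and showing that \emph{this morphism} is formally smooth (via $\mathrm{H}^2$-vanishing for the relevant rank-one twists together with a lifting argument through the triangulation); formal smoothness of $R$ alone is compatible with the HT weight difference being constant. The rest of your structure (filtering $\wedge^i N$ via exterior powers of the triangulation, twisting by $\widetilde{\delta}_{[i]}^{-1}$, and reading off $\bold{D}^+_{\mathrm{dR}}$ on the bottom rank-one piece) is in the right spirit, but it only makes sense once you have a family to which it applies, and that family is what you have not produced.
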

\begin{proof}
First, because the construction of $X_{fs}$ commutes with base changes $E\mapsto E'$ by Lemma 3.10 of \cite{Na10} (more precisely, we can prove this lemma for our modified $X_{fs}$ in the same way), we may assume 
that $E(z)=E$. We prove the proposition under this assumption. 
By the conditions (0), (1) and Proposition 2.34 of [Na10], $D_{V_x}$ and $D_{V_x,T_x}$ are representable
by $R_{V_x}$ and  $R_{V_x,T_x}$ respectively. We denote by $V_x^{\mathrm{univ}}$ 
the universal deformation 
of $V_x$ over $R_{V_x}$, and denote by 
$$T_x^{\mathrm{univ}}: 0\subseteq W^{\mathrm{univ}}_1\subseteq W_{2}^{\mathrm{univ}}\subseteq \cdots 
\subseteq W^{\mathrm{univ}}_d=W(V^{\mathrm{univ}}_x)\otimes_{R_{V_x}}R_{V_x,T_x}$$ 
the universal triangulation,  and denote by $\{\delta_{i,x}\}_{i=1}^d$
 the parameter of $T_x^{\mathrm{univ}}$.

Because we have canonical isomorphisms
$$\hat{\mathcal{O}}_{\mathcal{X}_{\overline{V}},x}\isom R_{V_x}\text{ and }M\otimes_{\mathcal{O}_{\mathcal{X}_{\overline{V}}}}\hat{\mathcal{O}}_{\mathcal{X}_{\overline{V}},x}\isom V^{\mathrm{univ}}_x$$ by Proposition 9.5 of \cite{Ki03}, we can define a morphism of $E$-algebras 
 $$g:\hat{\mathcal{O}}_{\mathcal{Z},z}
\isom \hat{\mathcal{O}}_{\mathcal{X}_{\overline{V}},x}\hat{\otimes}_E (\hat{\otimes}_{i=1}^{d-1}\hat{\mathcal{O}}_{\mathcal{T},\delta_i})
\isom R_{V_x}\hat{\otimes}_{E}(\hat{\otimes}_{i=1}^{d-1}\hat{\mathcal{O}}_{\mathcal{T},\delta_i})\twoheadrightarrow 
R_{V_x,T_x}\hat{\otimes}_E(\hat{\otimes}_{i=1}^{d-1}\hat{\mathcal{O}}_{\mathcal{T},\delta_i}),$$ 
where the first isomorphism is the natural one and the second isomorphism is induced by 
the above isomorphism $\hat{\mathcal{O}}_{\mathcal{X}_{\overline{V}},x}\isom R_{V_x}$ and the third surjection is induced by the natural quotient map $R_{V_x}\rightarrow R_{V_x,T_x}$.

We define an ideal $I$ of 
$R_{V_x,T_x}\hat{\otimes}_E (\hat{\otimes}_{i=1}^{d-1} \hat{\mathcal{O}}_{\mathcal{T}, \delta_i})$ which is generated by 
$$\{\delta_{i,x}(a)\otimes 1-g(\delta_i^{\mathrm{univ}}(a))| 1\leqq i\leqq d-1, a\in K^{\times} \}.$$ We set
$$R_z:= (R_{V_x,T_x}\hat{\otimes}_E (\hat{\otimes}_{i=1}^{d-1} \hat{\mathcal{O}}_{\mathcal{T}, \delta_i}))/I.$$
Then, the natural map $$R_{V_x,T_x}\rightarrow R_{V_x,T_x}\hat{\otimes}_E (\hat{\otimes}_{i=1}^d \hat{\mathcal{O}}_{\mathcal{T},\delta_i}) \twoheadrightarrow R_z: y\mapsto \overline{y\otimes 1}$$ 
is an isomorphism, and the universal parameter $\{\delta_{i,x}\}_{i=1}^{d}$ is equal to 
$\{\delta^{\mathrm{univ}}_i\}_{i=1}^{d}$ on $R_z$.
We define a morphism 
$$h:\hat{\mathcal{O}}_{\mathcal{Z},z}\xrightarrow{g} R_{V_x,T_x}\hat{\otimes}_E
(\hat{\otimes}_{i=1}^{d-1}\hat{\mathcal{O}}_{\mathcal{T},\delta_i})\twoheadrightarrow R_z.$$
 Because we have an isomorphism $M\otimes_{\mathcal{O}_{\mathcal{X}_{\overline{V}}}}\hat{\mathcal{O}}_{\mathcal{X}_{\overline{V}},x}\isom V^{\mathrm{univ}}_x$, $N\otimes_{\mathcal{O}_{\mathcal{Z}}}R_z$ is isomorphic to 
the universal trianguline deformation of $V_x$ over $R_{V_x,T_x}\isom R_z$. 
Hence, the $\sigma$-part of Sen's polynomial of $N\otimes_{\mathcal{O}_{\mathcal{Z}}}R_z$ is equal to $\prod_{i=1}^d(T-k(\delta_i^{\mathrm{univ}})_{\sigma})$, so the natural morphism $$\mathrm{Spm}(R_{z}/\mathfrak{m}^n)
\rightarrow \mathcal{Z}$$ factors through $$f_n:\mathrm{Spm}(R_z/\mathfrak{m}^n)\rightarrow \mathcal{Z}_0$$ for any $n\geqq 1$, where $\mathfrak{m}\subseteq R_z$ is the maximal ideal. 
We claim that $f_n$ also factors thorough $\mathcal{E}_{\overline{V}}$ for any $n$, which proves the proposition because the point of $\mathcal{Z}_0$
determined by $f_1$ is equal to $z$.

To show this claim, we note that $R_z$ is 
formally smooth over $E$ by the condition (2) and by Proposition 2.36 of \cite{Na10}. 
In particular, $R_z$ is a domain.
Then, by the proof of Theorem \ref{1.1}, it suffices to show the following lemma.

\end{proof}

\begin{lemma}\label{1.5}
The following hold.
\begin{itemize}
\item[(1)]$Q_{i,\sigma}(j)$ is non-zero in $R_z$ for any $1\leqq i\leqq d-1$, 
$\sigma\in \mathcal{P}$ and $j\in \mathbb{Z}_{\leqq 0}$, 
\item[(2)]For any $1\leqq i\leqq d-1$, $\sigma\in\mathcal{P}$ and $k\in \mathbb{Z}_{\geqq 1}$, the natural map 
$$\varprojlim_{n\geqq 1}(\bold{B}^+_{\mathrm{max},K}\otimes_{K,\sigma}f_n^*(N_i))^{G_K,\varphi_K=Y_{[i]}}\rightarrow 
\varprojlim_{n\geqq 1}(\bold{B}^+_{\mathrm{dR}}/t^k\bold{B}^+_{\mathrm{dR}}\otimes_{K,\sigma}f_n^*(N_i))^{G_K}$$ 
is a surjection.
\end{itemize}
\end{lemma}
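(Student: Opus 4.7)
For part (1), the plan is to compute $Q_{i,\sigma}(T)$ explicitly. On $\mathcal{Z}_0$, the $\sigma$-part of Sen's polynomial of $N$ factors as $\prod_{j=1}^d(T - k(\delta_j^{\mathrm{univ}})_\sigma)$; hence the $\sigma$-part of Sen's polynomial of $\wedge^i N$ equals $\prod_{J \subseteq [d],\,|J| = i}(T - k(\delta_J^{\mathrm{univ}})_\sigma)$ with $k(\delta_J^{\mathrm{univ}})_\sigma := \sum_{j \in J} k(\delta_j^{\mathrm{univ}})_\sigma$. By Proposition \ref{a-3}, twisting by $\widetilde{\delta_{[i]}^{\mathrm{univ}}}^{-1}$ shifts each root by $-k(\delta_{[i]}^{\mathrm{univ}})_\sigma$, and after factoring out the vanishing root coming from $J=[i]$
\[
Q_{i,\sigma}(T) = \prod_{\substack{J \subseteq [d],\,|J| = i \\ J \neq [i]}} \bigl(T - (k(\delta_J^{\mathrm{univ}})_\sigma - k(\delta_{[i]}^{\mathrm{univ}})_\sigma)\bigr).
\]
Under the identification $R_z \isom R_{V_x,T_x}$ supplied by the construction in the proof of Proposition \ref{1.4}, the weight functions $k(\delta_j^{\mathrm{univ}})_\sigma|_{R_z}$ for $1 \leqq j \leqq d-1$, $\sigma \in \mathcal{P}$ are linearly independent in $\mathfrak{m}_{R_z}/\mathfrak{m}_{R_z}^2$ by the tangent-space calculation underlying Proposition 2.39 of \cite{Na10}, while $k(\delta_d^{\mathrm{univ}})_\sigma$ is determined up to an additive constant by the fixed determinant. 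For any $J \neq [i]$ the linear form $k(\delta_J^{\mathrm{univ}})_\sigma - k(\delta_{[i]}^{\mathrm{univ}})_\sigma$ therefore has nonzero image in $\mathfrak{m}_{R_z}/\mathfrak{m}_{R_z}^2$, so each factor $j - (\cdots)$ is a nonzero element of the integral domain $R_z$, and $Q_{i,\sigma}(j) \neq 0$ for every $j \in \mathbb{Z}_{\leqq 0}$.

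For part (2), the strategy is a dévissage along the universal triangulation $T_x^{\mathrm{univ}}$ of $W(V_x^{\mathrm{univ}})|_{R_z}$. The canonical filtration on the exterior power of a triangulated $B$-pair, combined with the twist by $\widetilde{\delta_{[i]}^{\mathrm{univ}}}^{-1}$, writes $f_n^*(N_i)$ as a successive extension of rank-one $B$-pairs $W(\delta_{J,[i]})$ indexed by subsets $J \subseteq [d]$ of size $i$, where $\delta_{J,[i]}|_{\mathcal{O}_K^{\times}} = (\delta_J^{\mathrm{univ}}/\delta_{[i]}^{\mathrm{univ}})|_{\mathcal{O}_K^{\times}}$ and $\delta_{J,[i]}(\pi_K) = \prod_{j \in J}Y_j$; the smallest graded piece (for $J = [i]$) is the unramified crystalline line with $\varphi^f = Y_{[i]}$. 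Tensoring the short exact sequence
\[
0 \to (\bold{B}^+_{\mathrm{max},K}\hat{\otimes}_{K,\sigma}R_n)^{\varphi_K=Y_{[i]}} \to \bold{B}^+_{\mathrm{dR}}/t^k\bold{B}^+_{\mathrm{dR}}\hat{\otimes}_{K,\sigma}R_n \to U_{i,\sigma,n} \to 0
\]
from Proposition 3.7 of \cite{Na10} with each graded piece and taking the associated long exact $G_K$-cohomology sequence reduces the desired surjectivity at level $n$ to the vanishing, for each $J$, of the connecting map $(\bold{B}^+_{\mathrm{dR}}/t^k\bold{B}^+_{\mathrm{dR}} \hat{\otimes}_{K,\sigma} W(\delta_{J,[i]}))^{G_K} \to (U_{i,\sigma,n} \hat{\otimes}_{R_n} W(\delta_{J,[i]}))^{G_K}$. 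For $J = [i]$ this is automatic since $W(\delta_{Y_{[i]}})$ lies in the $\varphi_K = Y_{[i]}$-part. For $J \neq [i]$ I would use conditions (1) and (2) of Proposition \ref{1.4}, together with part (1) just established, to force $\delta_{J,[i]}$ either to miss $\varphi^f$-eigenvalue $Y_{[i]}$ or to have generalized Hodge-Tate weight outside the critical integer range, which kills the connecting map. Inducting along the filtration gives surjectivity at each finite level, and the $R_n$ are finite-dimensional over $E$ so the inverse system satisfies Mittag-Leffler and the surjectivity passes to $\varprojlim_n$.

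The main obstacle I anticipate is the rank-one cohomological analysis for the graded pieces with $J \neq [i]$: conditions (1) and (2) of Proposition \ref{1.4} are stated on the closed-point parameter $\{\delta_j\}$, so translating them into precise vanishing statements for the deformed character $\delta_{J,[i]} \in \mathcal{T}(R_n)$, and simultaneously controlling the Frobenius-eigenvalue condition alongside the generalized Hodge-Tate weight condition on each $\sigma$-isotypic component of $\bold{B}^+_{\mathrm{dR}}/t^k\bold{B}^+_{\mathrm{dR}}$ and $U_{i,\sigma,n}$, is the technically delicate step. I expect this part to rely on the classification of rank-one $A$-$B$-pairs (Proposition \ref{4.1}), on Corollary 3.5 of \cite{Na10}, and on the Banach-module structure of $U_{i,\sigma,n}$ provided by Proposition 3.7 of \cite{Na10}.
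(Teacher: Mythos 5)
Your explicit computation of $Q_{i,\sigma}(T)$ in part (1) — factoring Sen's polynomial of $N_i$ as $\prod_{|J|=i}\bigl(T-(k(\delta_J^{\mathrm{univ}})_\sigma - k(\delta_{[i]}^{\mathrm{univ}})_\sigma)\bigr)$ and dividing out the $J=[i]$ root — is exactly what the paper does. After that, however, the two arguments diverge, and yours has a gap. You assert that the weight functions $k(\delta_j^{\mathrm{univ}})_\sigma$, $1\leqq j\leqq d-1$, are linearly independent in $\mathfrak{m}_{R_z}/\mathfrak{m}_{R_z}^2$ and that $k(\delta_d^{\mathrm{univ}})_\sigma$ is ``determined up to an additive constant by the fixed determinant.'' Two problems. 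First, the determinant is \emph{not} fixed in $D_{V_x,T_x}$, so $k(\delta_d^{\mathrm{univ}})_\sigma$ is itself a varying function, not a constant shift of a linear combination of the others. Second, linear independence of the $k(\delta_j^{\mathrm{univ}})_\sigma$ is not ``underlying Proposition 2.39 of \cite{Na10}'': that proposition computes $\dim R_{V_x,T_x}$ but does not show that the composition with the weight map is a submersion, and in the paper that statement only appears as Proposition \ref{1.9}, which requires the extra hypotheses of Theorem \ref{1.7} that are not in force in Proposition \ref{1.4}. What the paper actually proves (as Lemma \ref{1.6}) is weaker and more targeted: for each fixed $J\neq[i]$ it constructs a morphism of functors $f_J:D_{V_x,T_x}\to D_{\delta_{J,0}}$, where $\delta_{J,0}:=\delta_J\cdot\delta_{[i]}^{-1}|_{\mathcal{O}_K^\times}$, proves $f_J$ is formally smooth (using only the $\mathrm{H}^2$-vanishing from condition (1) of Proposition \ref{1.4}), and deduces that $f_J^*:R_{\delta_{J,0}}\hookrightarrow R_{V_x,T_x}$ is injective. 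Since $k(\delta_{J,0}^{\mathrm{univ}})_\sigma$ is non-constant in $R_{\delta_{J,0}}$ (Lemma 3.19 of \cite{Na10}), its image $k(\delta_{J,x})_\sigma-k(\delta_{[i],x})_\sigma$ is non-constant in $R_z\isom R_{V_x,T_x}$, and each factor of $Q_{i,\sigma}(j)$ is nonzero in the domain $R_z$. This is what you need; the joint linear independence you invoke is both unproved here and unnecessary.

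For part (2) the paper simply invokes Proposition 2.8 of \cite{Ki03}, which derives the surjectivity of the inverse-limit map from the non-vanishing of the $Q_{i,\sigma}(j)$ via a direct argument on the finite-level modules over $R_z/\mathfrak{m}^n$; no triangulation of $N_i$ is used. Your d\'evissage along the exterior-power filtration of $f_n^*(N_i)$ is a genuinely different strategy, and as you yourself flag, it stalls at the key step: you would need to show the connecting maps vanish for all $J\neq[i]$, and the translation of conditions (1) and (2) of Proposition \ref{1.4} into statements about the deformed characters $\delta_{J,[i]}\in\mathcal{T}(R_n)$ is not carried out — it is not even clear that the graded-piece reduction is legitimate, since the long exact sequences for the successive extensions can interact. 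To salvage your route you would have to make these estimates precise; the more economical path, and the one the paper takes, is to re-use Kisin's Proposition 2.8 verbatim now that (1) is in hand.
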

\begin{proof}
If we can prove (i), then (ii) can be proved in the same way as in the proof of Proposition 2.8 of \cite{Ki03}. 
We prove (i). Because the $\sigma$-part of the generalized Hodge-Tate weights of $f_n^*(N)$ is (modulo $\mathfrak{m}^n$ of) $\{k(\delta_{i,x})_{\sigma}\}_{i=1}^d$, 
the $\sigma$-part of the generalized Hodge-Tate weights of $f_n^*(N_i)$ is 
$$\{k(\delta_{J,x})_{\sigma}-k(\delta_{[i],x})_{\sigma} |J\subseteq [d] \text{ such that }\sharp(J)=i\},$$
 for each $1\leqq i\leqq d-1$, where we define 
 $\delta_{J,x}:=\prod_{l=1}^i\delta_{j_l,x}$ for each subset $J=\{j_1,j_2,\cdots,j_i\}$ of $[d]$. Therefore, 
the $\sigma$-part of Sen's polynomial of $f_n^*(N_i)$ is equal to 
$$\prod_{J\subseteq [d],\sharp(J)=i}(T-k(\delta_{J,x})_{\sigma}+k(\delta_{[i],x})_{\sigma}), $$ so the 
polynomial $Q_{i,\sigma}(T)$ for $f_n^*(N)$ is equal to
$$\prod_{J\not=[i]\subseteq [d], \sharp(J)=i}
(T-k(\delta_{J,x})_{\sigma}+k(\delta_{[i],x})_{\sigma}).$$
Hence, it suffices to show the following lemma.

\end{proof}
\begin{lemma}\label{1.6}
For any $1\leqq i\leqq d-1$ and for any subset $J\not=[i]$ such that $\sharp(J)=i$, then 
$k(\delta_{J,x})_{\sigma}-k(\delta_{[i],x})_{\sigma}$ is not constant, i.e. not contained in $E$ for any 
$\sigma\in\mathcal{P}$.
\end{lemma}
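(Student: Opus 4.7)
My approach is to exhibit a single tangent vector witnessing the non-constancy. Under assumptions (0), (1), (2) of Proposition \ref{1.4}, Proposition \ref{4.2} gives that $R_z\cong R_{V_x,T_x}$ is formally smooth over $E$; in particular it is an integral domain, so an element $f\in R_z$ equals its value $f(z)\in E$ (i.e.\ is a ``constant'') if and only if $f-f(z)=0$. A sufficient condition for $f-f(z)\neq 0$ is that its image in the cotangent space $\mathfrak{m}_z/\mathfrak{m}_z^2$ is non-zero, equivalently, that there exists a class $[(V_A,T_A)]\in D_{V_x,T_x}(A)$, with $A:=E[\epsilon]/(\epsilon^2)$, on which the corresponding linear functional does not vanish. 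Writing
$$k(\delta_{J,x})_{\sigma}-k(\delta_{[i],x})_{\sigma}=\sum_{l\in J\setminus[i]}k(\delta_{l,x})_{\sigma}-\sum_{l\in[i]\setminus J}k(\delta_{l,x})_{\sigma},$$
and noting that $J\neq[i]$ together with $|J|=|[i]|=i$ forces $J\setminus[i]\neq\emptyset$, I fix $j_0\in J\setminus[i]$.

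Next, I construct the tangent vector by perturbing only the $\sigma$-part of the Hodge--Tate weight of $\delta_{j_0}$. Let $\eta\colon K^\times\to A^\times$ be the continuous homomorphism determined by $\eta(\pi_K)=1$ and by $k(\eta)_\sigma=\epsilon$, $k(\eta)_{\sigma'}=0$ for $\sigma'\neq\sigma$ (concretely, $\eta(u)=1+\epsilon\log\sigma(u)$ on $1+\pi_K\mathcal{O}_K$, extended in any compatible way on torsion). Set $\delta_l^{\dagger}:=\delta_l\otimes_E A$ for $l\neq j_0$ and $\delta_{j_0}^{\dagger}:=\delta_{j_0}\cdot\eta$. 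Granted the realization of $\{\delta_l^{\dagger}\}_{l=1}^d$ as the parameter of some trianguline deformation $(V_A,T_A)$ of $(V_x,T_x)$, one has $\prod_{l\in J}\delta_l^{\dagger}=\delta_J\cdot\eta$ and $\prod_{l\in[i]}\delta_l^{\dagger}=\delta_{[i]}$ since $j_0\in J\setminus[i]$, whence
$$k(\delta_{J,A})_\sigma-k(\delta_{[i],A})_\sigma=\bigl(k(\delta_J)_\sigma-k(\delta_{[i]})_\sigma\bigr)+\epsilon,$$
the $\epsilon$-coefficient is $1\neq 0$, and the lemma follows.

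The main obstacle is the construction of $(V_A,T_A)$. Inductively in $l$, one lifts each successive extension $0\to W_{l-1}\to W_l\to W(\delta_l)\to 0$ comprising $T_x$ to an $A$-extension $0\to W_{l-1,A}\to W_{l,A}\to W(\delta_l^{\dagger})\to 0$, with $W_{l-1,A}$ already built at the previous step. The obstruction to each lift lies in $H^2(G_K,W_{l-1}\otimes W(\delta_l)^{-1})$, whose graded pieces are $H^2(G_K,W(\delta_{l'}\delta_l^{-1}))$ for $l'<l$. By Tate-style duality for $B$-pairs (cf.\ \S2 of \cite{Na10}), these $H^2$-groups vanish precisely under hypotheses (1) and (2) of Proposition \ref{1.4}; this is the same vanishing that underlies the formal smoothness of $R_{V_x,T_x}$ invoked at the outset. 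Consequently the parameter map $D_{V_x,T_x}\to\mathcal{T}^d$ is surjective on tangent spaces---the compatibility $\prod_l\delta_l^{\dagger}=(\det V_A)\circ\mathrm{rec}_K$ imposes no constraint, since $\det V_A$ is itself free to deform---which secures the desired tangent vector.
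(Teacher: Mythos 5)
Your proof is correct and follows the same core mechanism as the paper's (lifting trianguline $B$-pair extensions step by step, with obstructions in the $H^2$'s that vanish by the hypotheses of Proposition \ref{1.4} via Proposition 2.9 of \cite{Na10}, followed by realizing the lifted trianguline $B$-pair as a representation via \cite{Ke08}). The route differs in one genuine respect: the paper builds a \emph{morphism of deformation functors} $f_J : D_{V_x,T_x}\to D_{\delta_{J,0}}$, proves it formally smooth (hence $f_J^*$ injective), and then quotes Lemma 3.19 of \cite{Na10} (non-constancy of $k(\delta^{\mathrm{univ}})_\sigma$ on $R_{\delta_{J,0}}$), whereas you bypass both the auxiliary ring $R_{\delta_{J,0}}$ and that external lemma by exhibiting a single explicit tangent vector $[(V_A,T_A)]\in D_{V_x,T_x}(E[\epsilon])$, obtained by perturbing only $\delta_{j_0}$ for a fixed $j_0\in J\setminus [i]$, on which the $\epsilon$-coefficient of $k(\delta_{J,A})_\sigma - k(\delta_{[i],A})_\sigma$ is $1$. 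What your version buys is concreteness and independence from Lemma 3.19 of \cite{Na10}; what the paper's version buys is a statement (formal smoothness of $f_J$) that is reusable elsewhere. One small cleanup: the ``integral domain'' remark at the start does no work — for a local ring with residue field $E$, ``$f$ constant'' just means $f=f(z)$, and a non-zero image in $\mathfrak{m}_z/\mathfrak{m}_z^2$ suffices to rule this out regardless of whether $R_z$ is a domain. Also, the $H^2$-vanishing you need comes from the ``no-pathological-quotient'' condition (condition (2) of Proposition \ref{1.4}, i.e.\ $\delta_{l'}/\delta_l\neq |N_{K/\mathbb{Q}_p}|_p\prod_\sigma\sigma^{k_\sigma}$), not from both (1) and (2) as your phrasing suggests; condition (1) controls $H^0$ rather than $H^2$.
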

\begin{proof}
For any continuous homomorphism $\delta:\mathcal{O}_K^{\times}\rightarrow E^{\times}$, we define a functor 
$D_{\delta}:\mathcal{C}_E\rightarrow (Sets)$ by
\begin{multline*}
D_{\delta}(A):=\{\delta_A:\mathcal{O}_K^{\times}\rightarrow A^{\times}|\text{continuous homomorphisms }\\
\text{such that } 
\delta_A (\text{ mod }\mathfrak{m}_A)\equiv\delta \},
\end{multline*}
 for $A\in \mathcal{C}_E$.
Then, $D_{\delta}$ is representable by  a ring $R_{\delta}$ which is formally smooth over $E$ of its dimension equal to $[K:\mathbb{Q}_p]$. If we denote by $\delta^{\mathrm{univ}}:\mathcal{O}_K^{\times}\rightarrow R_{\delta}^{\times}$ the 
universal deformation of $\delta$, then the $\sigma$-part of the generalized 
Hodge-Tate weight $k(\delta^{\mathrm{univ}})_{\sigma}$ is not constant for any $\sigma\in\mathcal{P}$ by Lemma 3.19 of \cite{Na10}. 

For each subset $J\not=[i]$ of $[d]$ such that $\sharp(J)=i$, we set $\delta_{J,0}:=\delta_{J}\cdot\delta_{[i]}^{-1}|_{\mathcal{O}_K^{\times}}$ and 
define a morphism of functors $f_{J}:D_{V_x,T_x}\rightarrow D_{\delta_{J,0}}$ 
by $$f_{J}([(V_A,T_A)]):=(\delta_{J,A}\cdot\delta_{[i],A}^{-1})|_{\mathcal{O}_K^{\times}},$$ where, for  the parameter $\{\delta_{j,A}\}_{j=1}^d$ of the $A$-triangulation 
$T_A$, we set $\delta_{J,A}:=\prod_{l=1}^{i}\delta_{j_l,A}$ for  each subset $J=\{j_1,\cdots,j_i\}$ of $[d]$. We claim that this morphism is formally smooth, which proves the lemma 
because then $k(\delta_{J,x})_{\sigma}-k(\delta_{[i],x})_{\sigma}$ is equal to 
$f_{J}^*(k(\delta^{\mathrm{univ}}_{J,0})_{\sigma})$ where $f_{J}^*: R_{\delta_{J,0}}\allowbreak \hookrightarrow R_{V_x,T_x}$ is 
the map induced by $f_{J}$ which is an injection by the formally smoothness of $f_{J}$. 
We prove the claim. Let $A$ be an object of $\mathcal{C}_E$ and $I\subseteq A$ be an ideal of $A$ such that $I^2=0$. For $[(V_{A/I},T_{A/I})]\in D_{V_x,T_x}(A/I)$ and $\delta_A\in D_{\delta_{J,0}}(A)$ such that $\delta_{J,A/I}\cdot\delta_{[i],A/I}^{-1}|_{\mathcal{O}_K^{\times}}\equiv\delta_A (\text{mod } I)$, where the parameter of the $A/I$-triangulation $T_{A/I}$ is $\{\delta_{j,A/I}\}_{j=1}^d$, then it suffices to show that there exists $[(V_A,T_A)]\in D_{V_x,T_x}(A)$ a lift of $[(V_{A/I},T_{A/I})]$ such that 
$\delta_{J,A}\cdot\delta_{[i],A}^{-1}|_{\mathcal{O}_K^{\times}}=\delta_A$.

Because $D_{\delta}$ is formally smooth for any $\delta$, we can take a lift 
$\{\delta_{j,A}:K^{\times}\rightarrow A^{\times}\}_{j=1}^d$ of $\{\delta_{j,A/I}\}_{j=1}^d$ such that 
$\delta_{J,A}\cdot\delta_{[i],A}|_{\mathcal{O}_K^{\times}}=\delta_A$. Because we have 
$\mathrm{H}^2(G_K, W(\delta_i/\delta_j))=0$ for any $i<j$ by the condition (ii) and
Proposition 2.9 of \cite{Na10}, the natural map 
$$\mathrm{H}^1(G_K, W(\delta_{1,A}/\delta_{2,A}))\rightarrow 
\mathrm{H}^1(G_K, W(\delta_{1,A/I}/\delta_{2,A/I}))$$ is a surjection. Hence the sub $(A/I)$-$B$-pair $W_{2,A/I}$ of $W(V_{A/I})$ lifts to 
an $A$-$B$-pair $W_{2,A}$ which is an extension of $W(\delta_{1,A})$ by $W(\delta_{2,A})$. 
Repeating this procedure, we can take a trianguline $A$-$B$-pair $(W_A,T_A)$ which is a
 lift of $(W(V_{A/I}),T_{A/I})$ and whose parameter is $\{\delta_{i,A}\}_{i=1}^d$. Moreover, there exists 
 an $A$-representation $V_A$ such that $W_A\isom W(V_A)$ by Proposition 1.5.6 of \cite{Ke08}, 
 which proves the formally smoothness of $f_{J}$.

\end{proof}
Next, we prove that the local structure of $\mathcal{E}_{\overline{V}}$ at $z_{(V_x,T_x)}$ can be 
described in terms of the trianguline deformation $D_{V_x,T_x}$.

\begin{thm}\label{1.7}
Let $z:=z_{(V_x,T_x)}$ be a point of $\mathcal{E}_{\overline{V}}$ satisfying all the conditions  in Proposition \ref{1.4}. Moreover, if $(V_x,T_x)$ satisfies one of the following additional conditions (1) or (2), 

\begin{itemize}
\item[(1)]$V_x$ is potentially crystalline and $$\{a\in \bold{D}_{\mathrm{cris}}((\wedge^i V_x)(\widetilde{\delta}_{[i]}^{-1}))| 
\exists n\geqq 1\text{ such that }  (\varphi^f-\delta_{[i]}(\pi_K))^na=0\}$$ is a free of rank one $K_0\otimes_{\mathbb{Q}_p}E$-module 
for any $1\leqq i\leqq d-1$,
\item[(2)]For any $1\leqq i\leqq d-1$, and for any subset $J\not=[i]$ of $[d]$ such that $\sharp(J)=i$, 
we have  $k(\delta_{J})_{\sigma} -k(\delta_{[i]})_{\sigma}\not\in \mathbb{Z}_{\leqq 0}$ for any $\sigma\in \mathcal{P}$,

\end{itemize}

then there exists a canonical isomorphism $\hat{\mathcal{O}}_{\mathcal{E}_{\overline{V}}, z}\isom 
R_{V_x,T_x}$. In particular, $\mathcal{E}_{\overline{V}}$ is smooth of its dimension $[K:\mathbb{Q}_p]\frac{d(d+1)}{2} +1$ at $z$.

\end{thm}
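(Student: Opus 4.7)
The plan is to promote the surjection $\pi : \hat{\mathcal{O}}_{\mathcal{E}_{\overline{V}}, z} \twoheadrightarrow R_z \isom R_{V_x, T_x}$ constructed in the proof of Proposition \ref{1.4} to an isomorphism. Since the hypotheses (0), (1), (2) of Proposition \ref{1.4} are exactly the conditions (i), (ii), (iii) of Proposition \ref{4.2}, the target $R_{V_x,T_x}$ is formally smooth over $E$ of dimension $D := [K:\mathbb{Q}_p]\frac{d(d+1)}{2}+1$. Once $\pi$ is known to be an isomorphism, the smoothness and dimension statement in the conclusion are automatic. Because $\pi$ surjects onto a regular local ring of dimension $D$, it suffices to show that $\dim_E (\mathfrak{m}/\mathfrak{m}^2) \le D$ for $\mathfrak{m} \subseteq \hat{\mathcal{O}}_{\mathcal{E}_{\overline{V}}, z}$; equivalently, one shows for every $A \in \mathcal{C}_E$ that every $A$-valued point of $\mathcal{E}_{\overline{V}}$ supported at $z$ underlies a (necessarily unique) trianguline deformation of $(V_x, T_x)$ in $D_{V_x,T_x}(A)$.

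So, given $\tilde f : \mathrm{Spm}(A) \to \mathcal{E}_{\overline{V}}$ supported at $z$, one obtains an $A$-representation $V_A$ deforming $V_x$ together with characters $\delta_{i,A}$ deforming $\delta_i$ and a determinant constraint defining $\delta_{d,A}$. The first step is to invoke condition (ii) of Corollary \ref{1.3} together with Proposition \ref{1.2}(1), (3) for $\tilde f$ to produce, for each $1 \leq i \leq d-1$, a non-zero class
$$v_{i,A} \in \bold{D}_{\mathrm{cris}}\bigl((\wedge^i V_A)(\widetilde{\delta}_{[i],A}^{-1})\bigr)^{\varphi^f=\delta_{[i],A}(\pi_K)}$$
whose reduction mod $\mathfrak{m}_A$ is a fixed generator of the analogous one-dimensional eigenspace associated to $(V_x, T_x)$. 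The second step is to use the additional hypothesis (1) or (2) to upgrade $v_{i,A}$ to a basis of a rank-one free $K_0 \otimes_{\mathbb{Q}_p} A$-direct summand of the $\varphi^f$-eigenspace: under (1) one applies Nakayama directly, using the assumed rank-one freeness at $z$; under (2), the condition $k(\delta_J)_\sigma - k(\delta_{[i]})_\sigma \notin \mathbb{Z}_{\leqq 0}$ for $J \neq [i]$ rules out any infinitesimal enlargement of the eigenspace by $\varphi^f$-eigenvectors coming from other subsets (a generic-Hodge-Tate-weight argument parallel to Lemma \ref{1.6}).

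The third step, following the exterior product strategy of Bella\"iche--Chenevier, uses the rank-one $A$-$B$-pair embeddings $W(\delta_{[i],A}) \hookrightarrow \wedge^i W(V_A)$ produced from the $v_{i,A}$ to reconstruct an $A$-triangulation $T_A$ of $W(V_A)$ with parameter $\{\delta_{i,A}\}_{i=1}^d$: $W_{1,A}$ is the image of $W(\delta_{1,A}) \hookrightarrow W(V_A)$, and $W_{i,A}$ is characterised inductively as the unique saturated rank-$i$ sub-$A$-$B$-pair containing $W_{i-1,A}$ whose $i$-th exterior power in $\wedge^i W(V_A)$ coincides with the image of $W(\delta_{[i],A})$. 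The compatibilities among the $v_{i,A}$ force the successive quotients $W_{i,A}/W_{i-1,A}$ to be isomorphic to $W(\delta_{i,A})$, and $(V_A, T_A)$ lifts $(V_x, T_x)$, yielding the desired class in $D_{V_x,T_x}(A)$ and hence the inverse of $\pi$. The main obstacle is the rank-one/saturation assertion in the second step: over $z$ it is a statement about a single crystalline representation, but propagating it over $A$ requires ruling out spurious $\varphi^f$-eigenvectors along the deformation, which is exactly what the alternative hypotheses (1) (crystalline multiplicity one) and (2) (Hodge-Tate positivity) are designed to prevent.
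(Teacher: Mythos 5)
The core difficulty you have not addressed is how to control the $R$-module structure of the $\varphi^f$-eigenspaces over Artinian thickenings of $z$ inside $\mathcal{E}_{\overline{V}}$, and this is precisely where the paper invests the most work. Your Step 2 asserts that from a non-zero class $v_{i,A}$ one can "upgrade" to a basis of a rank-one free $K_0\otimes_{\mathbb{Q}_p}A$-direct summand via Nakayama (under (1)) or a Hodge--Tate weight argument (under (2)). Neither works as stated. Nakayama gives a surjection $K_0\otimes_{\mathbb{Q}_p}A\to\bold{D}^+_{\mathrm{cris}}((\wedge^iV_A)(\widetilde{\delta}_{[i],A}^{-1}))^{\varphi^f=\delta_{[i],A}(\pi_K)}$ only once you know the target is finitely generated and that $v_{i,A}$ reduces to a generator mod $\mathfrak{m}_A$; but you have not produced such a $v_{i,A}$. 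Proposition \ref{1.2}(3) is a statement about the fibers $M_i(x)$ at \emph{classical} points $x\in\mathrm{Spm}(R)$, not about the modules $(\bold{B}^+_{\mathrm{max},K}\hat{\otimes}(N_i\otimes R))^{G_K,\varphi_K=Y_{[i]}}\otimes_R R/\mathfrak{m}_z^n$, whose structure is uncontrolled a priori. The module $(\bold{B}^+_{\mathrm{dR}}/t^k\hat{\otimes}(N_i\otimes R))^{G_K}$ need not be locally free of rank one near $z$ (one cannot assume $\mathcal{E}_{\overline{V}}$ is reduced or smooth at $z$ --- that is the conclusion, not the hypothesis), so its pullback to $R/\mathfrak{m}_z^n$ may have rank $>1$, may have torsion, or may fail to contain a unit eigenvector.

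The paper circumvents this obstruction by blowing up $\mathrm{Spm}(R)$ along the ideal $H=\prod_{i,\sigma}H_{i,\sigma}$. After the blow-up, each $H_{i,\sigma}$ becomes principal in $\hat{\mathcal{O}}_{\widetilde{T},\widetilde{z}}$, generated by a non-zero-divisor $h_{i,\sigma}$, and dividing by $h_{i,\sigma}$ yields a unit $\varphi^f$-eigenvector. An induction on $n$ then gives rank-one freeness of $\bold{D}^+_{\mathrm{cris}}(N_i\otimes\hat{\mathcal{O}}_{\widetilde{T},\widetilde{z}}/\mathfrak{m}_{\widetilde{z}}^n)^{\varphi_K=Y_{[i]}}$, and only at this stage can Proposition \ref{1.8} (your Step 3) be invoked. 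Because the resulting factorizations live over $\hat{\mathcal{O}}_{\widetilde{T},\widetilde{z}}$ rather than $\hat{\mathcal{O}}_{\mathcal{E}_{\overline{V}},z}$, the paper then uses the injectivity of $g:\hat{\mathcal{O}}_{\mathcal{E}_{\overline{V}},z}\hookrightarrow\prod_{\widetilde{z}}\hat{\mathcal{O}}_{\widetilde{T},\widetilde{z}}$ (Lemma 10.7 of Kisin together with Proposition \ref{1.2}(2)) to transport the factorization back to $\hat{\mathcal{O}}_{\mathcal{E}_{\overline{V}},z}$. Your proposal, by trying to build the triangulation directly over arbitrary $A$-points of $\mathcal{E}_{\overline{V}}$ at $z$, skips the blow-up entirely; that step is not an optional simplification but the mechanism that produces the eigenvector you need. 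Finally, note that the alternative hypotheses (1) and (2) are used in the paper only to show (at the closed point $z$) that $D_0=\bold{D}^+_{\mathrm{cris}}((\wedge^iV_x)(\widetilde{\delta}_{[i]}^{-1}))^{\varphi^f=\delta_{[i]}(\pi_K)}$ has rank exactly one with $\mathrm{Fil}^1 K\otimes_{K_0}D_0=0$, and again inside Proposition \ref{1.8}; they do not by themselves control the infinitesimal behaviour of the eigenspaces, contrary to the role you assign them.
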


\begin{proof}
 First, we claim that $D_0:=\bold{D}^+_{\mathrm{cris}}((\wedge^iV_x)(\widetilde{\delta}_{[i]}^{-1}))
^{\varphi^f=\delta_{[i]}(\pi_K)}$ is a sub $E$-filtered $\varphi$-module of $\bold{D}_{\mathrm{cris}}((\wedge^iV_x)(\widetilde{\delta}_{[i]}^{-1}))$ of rank one such that $\mathrm{Fil}^1K\otimes_{K_0}D_0=0.$
Because we have a natural inclusion 
$\bold{D}_{\mathrm{cris}}(W(\delta_{(\delta_{[i]}(\pi_K))}))\subseteq D_0$ which is induced  from  $\wedge^i(T_x)$ 
by Lemma 3.8 of \cite{Na10}, it suffices to show that $D_0$ is at most rank one. This is trivial for the condition (1). For (2), we assume that 
$D_0$ is not of rank one. Then, if we denote by $W'$
the cokernel of the natural injection $W(\delta_{(\delta_{[i]}(\pi_K))})\hookrightarrow 
W((\wedge^iV_x)(\widetilde{\delta}_{[i]}^{-1}))$ which is induced from $\wedge^i(T_x)$, 
the image of $D_0$ in $\bold{D}_{\mathrm{cris}}(W')$ is non zero. In particular, there exists a rank one 
$E$-filtered $\varphi$-submodule $D$ of $\bold{D}_{\mathrm{cris}}(W')^{\varphi^f=\delta_{[i
]}(\pi_K)}$ such that $\mathrm{Fil}^0K\otimes_{K_0}D=
K\otimes_{K_0}D$. This implies that $W'$ has a Hodge-Tate weight $\{k_{\sigma}\}_{\sigma\in \mathcal{P}}$ ($k_{\sigma}\leqq 0$). However, for any $\sigma$, the $\sigma$-part of 
the generalized Hodge-Tate weights of $W'$ are 
$\{k(\delta_{J})_{\sigma}-k(\delta_{[i]})_{\sigma}\}_{J}$ where $J$ runs through the subsets 
of $[d]$ such that $J\not=[i]$ and $\sharp(J)=i$, any of which is not negative integer 
by the assumption (2), which is a contradiction. We finish the proof of the claim.

We begin the proof of the theorem. As in the proof of Proposition \ref{1.4}, we may assume that $E(z)=E$.
By the proof of Proposition \ref{1.4}, we have already showed that there exists a natural local morphism 
$$\hat{\mathcal{O}}_{\mathcal{E}_{\overline{V}},z}\rightarrow R_z\isom R_{V_x,T_x}.$$ 
We construct the inverse as follows.
Let $\mathrm{Spm}(R)\subseteq \mathcal{E}_{\overline{V}}$ be an affinoid neighborhood of $z$ which is 
$Y_i$-small for any $1\leqq i\leqq d-1$, which  is possible because $z$ is an $E$-rational point so we have 
$Y_i(z)\in E$ and 
we can take $\mathrm{Spm}(R)$ such that $|\frac{Y_i}{Y_i(z)}-1|<1$ on $\mathrm{Spm}(R)$. We take a sufficiently large $k\in\mathbb{Z}_{\geqq 1}$ such that, for any 
$\sigma\in \mathcal{P}$ and $1\leqq i\leqq d-1$,
there exists a short exact sequence of Banach $R$-modules with the property (Pr)
$$0\rightarrow (\bold{B}^+_{\mathrm{max},K}\hat{\otimes}_{K,\sigma}R)^{\varphi_K=Y_{[i]}}\rightarrow 
\bold{B}^+_{\mathrm{dR}}/t^k\bold{B}^+_{\mathrm{dR}}\hat{\otimes}_{K,\sigma}R\rightarrow U_{i,\sigma}\rightarrow 0$$ 
for a Banach $R$-module $U_{i,\sigma}$ with the property (Pr) (see Proposition 3.7 of \cite{Na10}). By Proposition \ref{1.2},  we have an isomorphism
$$(\bold{B}^+_{\mathrm{max},K}\hat{\otimes}_{K,\sigma}(N_i\otimes_{\mathcal{O}_{\mathcal{Z}_0}}R))^{G_K, \varphi_K=Y_{[i]}}\isom 
(\bold{B}^+_{\mathrm{dR}}/t^k\bold{B}^+_{\mathrm{dR}}\hat{\otimes}_{K,\sigma}(N_i\otimes_{\mathcal{O}_{\mathcal{Z}_0}}R))^{G_K}$$ of locally free 
$R$-modules of rank one
for any $\sigma\in\mathcal{P}$ and $1\leqq i\leqq d-1$, and if we put $H_{i,\sigma}$ ($1\leqq i\leqq d-1$, $\sigma\in \mathcal{P}$) the smallest ideal of $R$ such that 
any $G_K$-equivariant $R$-linear map $h:N_i^{\vee}\rightarrow \bold{B}^+_{\mathrm{dR}}/t^k\bold{B}^+_{\mathrm{dR}}\hat{\otimes}_{K,\sigma} R$ 
factors through $\bold{B}^+_{\mathrm{dR}}/t^k\bold{B}^+_{\mathrm{dR}}\hat{\otimes}_{K,\sigma}H_{i,\sigma}$ and  put $H:=\prod_{1\leqq i\leqq d-1,\sigma\in\mathcal{P}}H_{i,\sigma}$, then 
$\mathrm{Spm}(R)\setminus V(H)$ and $\mathrm{Spm}(R)\setminus V(H_{i,\sigma})$ are scheme theoretically dense in $\mathrm{Spm}(R)$.
Under this situation, we denote by $\widetilde{T}$ the blow up of $\mathrm{Spm}(R)$ along the ideal $H$, and 
denote by $f:\widetilde{T}\rightarrow \mathrm{Spm}(R)$ 
the canonical projection. 
We claim that, for any $\widetilde{z}\in \widetilde{T}$ such that $f(\widetilde{z})=z$, 
$N\otimes_{\mathcal{O}_{\mathcal{Z}}}\mathcal{O}_{\widetilde{T},\widetilde{z}}/\mathfrak{m}_{\widetilde{z}}^n$ is a trianguline deformation of $(V_x\otimes_E E(\widetilde{z}),T_x\otimes_E E(\widetilde{z}))$ 
over $\mathcal{O}_{\widetilde{T},\widetilde{z}}/\mathfrak{m}^n_{\widetilde{z}}$ for any $n\in\mathbb{Z}_{\geqq 1}$.
To prove this claim, by the previous claim, we first note that $$\bold{D}^+_{\mathrm{cris}}(N_i\otimes_{\mathcal{O}_{\mathcal{Z}_0}}E(\widetilde{z}))^{\varphi^f=Y_{[i]}(\widetilde{z})}=\bold{D}^+_{\mathrm{cris}}((\wedge^iV_x)(\widetilde{\delta}_{[i]}^{-1}))
^{\varphi^f=\delta_{[i]}(\pi_K)}\otimes_E E(\widetilde{z})$$ is a free $K_0\otimes_{\mathbb{Q}_p}E(\widetilde{z})$-module of rank one and 
$$\mathrm{Fil}^1\bold{D}_{\mathrm{dR}}(N_i\otimes_{\mathcal{O}_{\mathcal{Z}_0}}E(\widetilde{z}))\cap K\otimes_{K_0}\bold{D}^+_{\mathrm{cris}}(N_i\otimes_{\mathcal{O}_{\mathcal{Z}_0}}E(\widetilde{z}))^{\varphi^f
=Y_{[i]}(\widetilde{z})}=0.$$
By the definition of blow up, there exists 
a non zero divisor $h_{i,\sigma}\in \hat{\mathcal{O}}_{\widetilde{T},\widetilde{z}}$ such that $H_{i,\sigma}\hat{\mathcal{O}}_{\widetilde{T},\widetilde{z}}=h_{i,\sigma}\hat{\mathcal{O}}_{\widetilde{T},\widetilde{z}}$ for any $1\leqq i\leqq d-1$ and $\sigma\in \mathcal{P}$. By the definition of 
$H_{i,\sigma}$, for any $1\leqq i\leqq d-1$ and $\sigma\in \mathcal{P}$, there exists a $G_K$-equivariant $R$-linear map 
$N_i^{\vee}\rightarrow (\bold{B}^+_{\mathrm{max},K}\hat{\otimes}_{K,\sigma}H_{i,\sigma})^{\varphi_K=Y_{[i]}}$ such that the composite with the map

\begin{multline*}
(\bold{B}^+_{\mathrm{max},K}\hat{\otimes}_{K,\sigma}H_{i,\sigma})^{\varphi_K=Y_{[i]}}\rightarrow (\bold{B}^+_{\mathrm{max},K}\hat{\otimes}_{K,\sigma}h_{i,\sigma}\hat{\mathcal{O}}_{\widetilde{T},\widetilde{z}})^{\varphi_K=Y_{[i]}} \\
  \isom (\bold{B}^+_{\mathrm{max},K}\hat{\otimes}_{K,\sigma}\hat{\mathcal{O}}_{\widetilde{T},\widetilde{z}})^{\varphi_K=Y_{[i]}}
\twoheadrightarrow (\bold{B}^+_{\mathrm{max},K}\otimes_{K,\sigma}E(\widetilde{z}))^{\varphi_K=\delta_{[i]}(\pi_K)}
\end{multline*}
 is non zero, where the isomorphism $$(\bold{B}^+_{\mathrm{max},K}\hat{\otimes}_{K,\sigma}h_{i,\sigma}\hat{\mathcal{O}}_{\widetilde{T},\widetilde{z}})^{\varphi_K=Y_{[i]}}\isom (\bold{B}^+_{\mathrm{max},K}\hat{\otimes}_{K,\sigma}\hat{\mathcal{O}}_{\widetilde{T},\widetilde{z}})^{\varphi_K=Y_{[i]}}$$ is given by $a\mapsto \frac{a}{h_{i,\sigma}}$. From these facts, 
 we can show by induction on 
 $n$ that
 $\bold{D}^+_{\mathrm{cris}}(N_i\otimes_{\mathcal{O}_{\mathcal{Z}_0}}\hat{\mathcal{O}}_{\widetilde{T},\widetilde{z}}/\mathfrak{m}_{\widetilde{z}}^n)^{\varphi_K=Y_{[i]}}$ is a free $K_0\otimes_{\mathbb{Q}_p}\hat{\mathcal{O}}_{\widetilde{T},\widetilde{z}}/\mathfrak{m}_{\widetilde{z}}^n$-module of rank one for any $1\leqq i\leqq d-1$ and $$K\otimes_{K_0}\bold{D}^+_{\mathrm{cris}}(N_i\otimes_{\mathcal{O}_{\mathcal{Z}_0}}\hat{\mathcal{O}}_{\widetilde{T},\widetilde{z}}/\mathfrak{m}_{\widetilde{z}}^n)^{\varphi^f=
 Y_{[i]}}\cap \mathrm{Fil}^1\bold{D}_{\mathrm{dR}}(N_i\otimes_{\mathcal{O}_{\mathcal{Z}_0}}\hat{\mathcal{O}}_{\widetilde{T},\widetilde{z}}/\mathfrak{m}_{\widetilde{z}}^n)=0$$
 for any $n\geqq 1$.

 By Proposition \ref{1.8} below, then $N\otimes_{\mathcal{O}_{Z}}\hat{\mathcal{O}}_{\widetilde{T},\widetilde{z}}/\mathfrak{m}_{\widetilde{z}}^n$ 
 is a trianguline deformation of $(V_x\otimes_E E(\widetilde{z}),T_x\otimes_E E(\widetilde{z}))$ over $\hat{\mathcal{O}}_{\widetilde{T},\widetilde{z}}/\mathfrak{m}_{\widetilde{z}}^n$ 
 whose parameter is $\{\delta_i^{\mathrm{univ}} (\text{mod }\mathfrak{m}_{\widetilde{z}}^n)\}_{i=1}^d$, so the natural map 
 $R_{V_x}\rightarrow \hat{\mathcal{O}}_{\widetilde{T},\widetilde{z}}$ factors through $R_{V_x}\rightarrow R_{V_x,T_x}\rightarrow \hat{\mathcal{O}}_{\widetilde{T},\widetilde{z}}$. 
 This shows that the natural map $R_{V_x}\rightarrow \hat{\mathcal{O}}_{\mathcal{E}_{\overline{V}},z}$ sends the kernel of the quotient map $R_{V_x}\rightarrow R_{V_x,T_x}$ to the kernel of the natural map 
 $$g:\hat{\mathcal{O}}_{\mathcal{E}_{\overline{V}},z}\rightarrow \prod_{\widetilde{z}\in \widetilde{T}, f(\widetilde{z})=z}\hat{\mathcal{O}}_{\widetilde{T},\widetilde{z}}.$$ 
 Because $g$ is an injection by Lemma 10.7 of \cite{Ki03} and by Proposition \ref{1.2} (2), the natural map $R_{V_x}\rightarrow \hat{\mathcal{O}}_{\mathcal{E}_{\overline{V}},z}$ also 
 factors thorough $R_{V_x}\rightarrow R_{V_x,T_x}\rightarrow \hat{\mathcal{O}}_{\mathcal{E}_{\overline{V}},z}$. We can easily check that this gives the desired inverse map. The last statement of the theorem follows from Proposition 2.39 of \cite{Na10}.

\end{proof}
The following proposition which we used in the above proof is a generalization of Theorem 2.5.6 of \cite{BeCh09} for any $K$.

\begin{prop}\label{1.8}
Let $V$ be a split trianguline $E$-representation of rank $d$ with a triangulation $T:0\subseteq 
W_1\subseteq W_2\subseteq\cdots\subseteq W_{d}:=W(V)$ 
whose parameter is $\{\delta_i\}_{i=1}^d$. 
We assume that $(V,T)$ satisfies one of conditions (1) or (2) of Theorem \ref{1.7}.
Let $A\in \mathcal{C}_E$, and let $V_A$ be a deformation of $V$ over $A$, and, for any $1\leqq i\leqq d$, let $\delta_{i,A}:K^{\times}\rightarrow A^{\times}$ be a continuous homomorphism which is a lift of $\delta_i$ 
 satisfying the following conditions,
 \begin{itemize}
 \item[(1)]for any $1\leqq i\leqq d-1$, $\bold{D}^+_{\mathrm{cris}}((\wedge^iV_A)(\widetilde{\delta}_{[i],A}^{-1}))^{\varphi^f=
 \delta_{[i],A}(\pi_K)}$ is a free $K_0\otimes_{\mathbb{Q}_p}A$-module of rank one,
 \item[(2)]for any $1\leqq i\leqq d-1$, the natural base change map $$\bold{D}^+_{\mathrm{cris}}((\wedge^iV_A)(\widetilde{\delta}_{[i],A}^{-1}))^{\varphi^f=
 \delta_{[i],A}(\pi_K)}\otimes_{A}E\rightarrow\bold{D}^+_{\mathrm{cris}}((\wedge^iV)(\widetilde{\delta}_{[i]}))^{\varphi^f=\delta_{[i]}(\pi_K)}$$ is isomorphism.
 \end{itemize}
 Then, $V_A$ has an $A$-triangulation $T_A$ such that $(V_A,T_A)$ is a deformation of 
 $(V,T)$ whose parameter is equal to $\{\delta_{i,A}\}_{i=1}^d$.

\end{prop}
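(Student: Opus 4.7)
My approach is induction on the rank $d$, stripping off the bottom step $W_{1,A}$ of the desired triangulation and applying the proposition in rank $d-1$ to the quotient. The case $d=1$ is essentially trivial: the only triangulation is $0 \subseteq W(V_A)$, and $\delta_{1,A}$ matches the determinant character of $V_A$ by the implicit compatibility $\prod_{i=1}^d \delta_{i,A} = (\det V_A)\circ \mathrm{rec}_K$ under which this proposition is applied.

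\textbf{Step A (construct $W_{1,A}$).} Hypothesis (1) with $i=1$ says $D_{1,A} := \bold{D}^+_{\mathrm{cris}}(V_A(\widetilde{\delta}_{1,A}^{-1}))^{\varphi^f = \delta_{1,A}(\pi_K)}$ is free of rank one over $K_0 \otimes_{\mathbb{Q}_p} A$. The claim opening the proof of Theorem \ref{1.7} (applicable since $(V,T)$ satisfies (1) or (2) of Theorem \ref{1.7}) shows its reduction $D_0 := D_{1,A}\otimes_A E$ is rank one with $\mathrm{Fil}^1(K \otimes_{K_0} D_0) = 0$ and corresponds to $W_1 \subseteq W(V)$. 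Combining hypothesis (2) with Nakayama applied to the finitely generated $K \otimes_{\mathbb{Q}_p} A$-module $\mathrm{Fil}^1(K \otimes_{K_0} D_{1,A})$ (which vanishes modulo $\mathfrak{m}_A$) forces $\mathrm{Fil}^1(K \otimes_{K_0} D_{1,A}) = 0$. Hence $D_{1,A}$ defines, through the equivalence between $A$-filtered $\varphi$-modules and crystalline $A$-$B$-pairs, a sub $A$-$B$-pair $W_{1,A} \cong W(\delta_{1,A}) \hookrightarrow W(V_A)$ lifting $W_1$.

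\textbf{Step B (descent of hypotheses to the quotient).} By Proposition 1.5.6 of \cite{Ke08} (cited in the proof of Lemma \ref{1.6}), $W(V_A)/W_{1,A} \cong W(V_A')$ for some $A$-representation $V_A'$ deforming $V' := V/W_1$, with induced triangulation $T' := T/W_1$ of parameter $(\delta_2,\ldots,\delta_d)$. To apply induction to $(V', V_A', \{\delta_{i,A}\}_{i=2}^d)$, I verify its analogs of (1) and (2). For $1 \leq j \leq d-2$, wedging the exact sequence $0 \to W_{1,A} \to W(V_A) \to W(V_A') \to 0$ gives
\begin{equation*}
0 \to W(\delta_{1,A}) \otimes \wedge^{j} W(V_A') \to \wedge^{j+1} W(V_A) \to \wedge^{j+1} W(V_A') \to 0;
\end{equation*}
twisting by $\widetilde{\delta}_{[j+1],A}^{-1}$ and using $W(\delta_{1,A}) = W(\widetilde{\delta}_{1,A}) \otimes W(\delta_{\delta_{1,A}(\pi_K)})$ identifies the sub, after absorbing the unramified $\varphi^f$-eigenvalue shift by $\delta_{1,A}(\pi_K)$, with $(\wedge^{j} V_A')(\widetilde{\delta}_{\{2,\ldots,j+1\},A}^{-1})$. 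Applying the left-exact functor $\bold{D}^+_{\mathrm{cris}}(-)^{\varphi^f = \delta_{[j+1],A}(\pi_K)}$ yields an injection of the candidate $V_A'$-module into the rank-one-free $V_A$-module of hypothesis (1). Over $E$ this injection is an isomorphism: the cokernel embeds in $\bold{D}^+_{\mathrm{cris}}((\wedge^{j+1} V')(\widetilde{\delta}_{[j+1]}^{-1}))^{\varphi^f = \delta_{[j+1]}(\pi_K)}$, whose generator $\wedge^{j+1} W_{j+1}$ vanishes in $\wedge^{j+1} W(V')$ because $W_{j+1}/W_1 \subseteq V'$ has rank only $j$. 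Nakayama over the semi-local ring $K_0 \otimes_{\mathbb{Q}_p} A$, together with hypotheses (1)(2) for $V_A$, promotes this to an isomorphism over $A$, giving (1) and (2) for $V_A'$.

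\textbf{Step C and main obstacle.} Induction supplies a triangulation $T_A'$ of $V_A'$ lifting $T'$ with parameter $\{\delta_{i,A}\}_{i=2}^d$; pulling $T_A'$ back along $W(V_A) \twoheadrightarrow W(V_A')$ and prepending $W_{1,A}$ yields the required $T_A$. The principal difficulty is Step B: the twist/eigenvalue bookkeeping (in particular, the appearance of $\delta_{1,A}(\pi_K)$ as an unramified shift factor under the identification) and the Nakayama promotion from isomorphism-over-$E$ to isomorphism-over-$A$ of the $K_0 \otimes_{\mathbb{Q}_p} A$-modules arising in the long exact sequence, which is what forces the injection to be a genuine equality rather than a proper inclusion.
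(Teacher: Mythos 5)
Your plan (induction on the rank $d$, peeling off $W_{1,A}$ and passing to the quotient) is a genuinely different route from the paper's proof, which does an induction on the filtration level $i$, constructing the sub-$B$-pairs $W_{i,A}\subseteq W(V_A)$ one at a time without ever leaving the ambient $W(V_A)$. Unfortunately your route runs into a structural obstruction right at Step B: the quotient $W(V)/W_1$ is an $E$-$B$-pair, but there is no reason for it to be \'etale, i.e.\ to be $W(V')$ for an actual $E$-representation $V'$. Already for a two-dimensional crystalline $V$ over $\mathbb{Q}_p$ with Hodge--Tate weights $\{0,k\}$ and a triangulation whose bottom step carries the Frobenius eigenvalue of positive valuation, the sub $W_1$ has strictly positive slope and so the rank-one quotient has strictly negative slope, hence is not the $B$-pair of any representation. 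Consequently $V':=W(V)/W_1$ simply does not exist as a representation, the deformation-theoretic Proposition 1.5.6 of Kedlaya (which lifts an $A$-$B$-pair to a representation \emph{given that its reduction already is one}) does not apply, and the whole inductive set-up in the category of representations collapses. One could try to salvage this by running the induction in the category of $B$-pairs instead, but then condition (1) of Theorem~\ref{1.7} (``$V_x$ is potentially crystalline\ldots'') and the ingredients borrowed from the claim in Theorem~\ref{1.7}'s proof would have to be reformulated and re-proved for non-\'etale $B$-pairs, which is a nontrivial restatement you have not done. The paper avoids all of this by keeping the induction inside $W(V_A)$, using the wedge-product exact sequence $0\to(\wedge^{i-1}W_{i-1,A})\otimes W'_{i,A}\to W(\wedge^i V_A)\to W''_{i,A}\to 0$ at each step without ever needing the quotient to be a representation.

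A secondary, lesser issue: the ``Nakayama'' promotion at the end of Step B is not quite right as stated. From the left-exact sequence $0\to N_1\to N_2\to N_3$ and the isomorphism over $E$, Nakayama requires knowing $N_1\otimes_A E\to N_2\otimes_A E$ is surjective, i.e.\ that $N_2\otimes_A E\to N_3\otimes_A E$ vanishes; but the base-change map $N_3\otimes_A E\to\bold{D}^+_{\mathrm{cris}}((\wedge^{j+1}V')(\widetilde{\delta}_{[j+1]}^{-1}))^{\varphi^f=\delta_{[j+1]}(\pi_K)}$ need not be injective, so vanishing of the image over $E$ downstairs does not yield vanishing upstairs. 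What is actually needed (and what the paper invokes) is a d\'evissage argument on the $B$-pair map $W(\delta_{[j+1],A})\to W((\wedge^{j+1}V_A)(\widetilde{\delta}_{[j+1],A}^{-1}))/(\text{sub})$, reduced to showing the corresponding $\mathrm{Hom}$ space vanishes over $E$ under conditions (1) or (2) of Theorem~\ref{1.7}. This issue is fixable, but the first gap is the one that sinks the approach as written.
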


\begin{proof}
The proof is of course essentially the same as that of \cite{BeCh09}, but we give the proof here for  convenience of readers.
%
By the claim in the proof of the above Theorem, for any $1\leqq i\leqq d-1$, we have
$$K\otimes_{K_0}\bold{D}^+_{\mathrm{cris}}((\wedge^iV)(\widetilde{\delta}_{[i]}^{-1}))^{\varphi^f=
\delta_{[i]}(\pi_K)}\cap \mathrm{Fil}^1\bold{D}_{\mathrm{dR}}((\wedge^iV)(\widetilde{\delta}_{[i]}^{-1}))=0.$$ By induction on the length of $A$, 
we can also show that 
$$K\otimes_{K_0}\bold{D}^+_{\mathrm{cris}}((\wedge^iV_A)(
\widetilde{\delta}_{[i],A}^{-1}))^{\varphi^f=\delta_{[i],A}(\pi_K)}
\cap \mathrm{Fil}^1\bold{D}_{\mathrm{dR}}((\wedge^iV_A)(
\widetilde{\delta}_{[i],A}^{-1}))=0.$$
From this and the condition (1), $\bold{D}^+_{\mathrm{cris}}((\wedge^iV_A)(\widetilde{\delta}_{[i],A}^{-1}))^{\varphi^f=\delta_{[i],A}(\pi_K)}$ is an 
$A$-filtered $\varphi$-module. Hence, by the condition (2) and Lemma 2.22 of \cite{Na10}, there exists an $A$-saturated inclusion 
$$h_i:W(\delta_{(\delta_{[i],A}(\pi_K))})
\hookrightarrow W((\wedge^iV_A)(\widetilde{\delta}_{[i],A}^{-1}))$$ such that 
$\bold{D}_{\mathrm{cris}}(h_i)$ corresponds to the canonical injection $$\bold{D}^+_{\mathrm{cris}}((\wedge^iV_A)(\widetilde{\delta}_{[i],A}^{-1}))
^{\varphi^f=\delta_{[i],A}}\hookrightarrow \bold{D}_{\mathrm{cris}}((\wedge^iV_A)(\widetilde{\delta}_{[i],A}^{-1})).$$
Twisting this injection $h_i$ by $\widetilde{\delta}_{[i],A}$, we obtain an $A$-saturated injection 
$$h'_i:W(\delta_{[i],A})\hookrightarrow W(\wedge^i V_A)$$ such that 
$h'_i (\text{mod }\mathfrak{m}_A)$ is equal to the canonical injection 
$$W(\delta_{[i]})\hookrightarrow W(\wedge^iV)$$ which is naturally induced by 
$\wedge^iT$. Using these facts, we show by induction on $i$ that there exists an $A$-saturated 
sub $A$-$B$-pair $W_{i,A}$ of $W(V_A)$ such that $W_{i-1,A}$ is a sub $A$-saturated 
$A$-$B$-pair of $W_{i,A}$ and $W_{i,A}/W_{i-1,A}\isom W(\delta_{i,A})$ and the image of  the inclusion 
$W_{i,A}\otimes_A E\hookrightarrow W(V_A)\otimes_A E=W(V)$ is equal to
$W_i$.
For $i=1$, we take $W_{1,A}$ as the image of the inclusion $h'_1:W(\delta_{1,A})\hookrightarrow W(V_A)$. 
We assume that we can take a filtration $W_{1,A}\subseteq W_{2,A}\subseteq \cdots \subseteq W_{i-1,A}\subseteq W(V_A)$ 
satisfying the above conditions. 
Denote by $W'_{i,A}$(resp. $W'_i$) the cokernel of the inclusion $W_{i-1,A}\hookrightarrow W(V_A)$ 
(resp. $W_{i-1}\hookrightarrow W(V)$).  Taking the $i$-th exterior product of $W(V_A)$ (resp. $W(V)$), 
we obtain a following short exact sequence of $A$-$B$-pairs

$$0\rightarrow (\wedge^{i-1}W_{i-1,A})\otimes W'_{i,A}\rightarrow W(\wedge^iV_A) \rightarrow W''_{i,A}\rightarrow 0$$
for an $A$-$B$-pair $W''_{i,A}$ (similarly for $W(\wedge^i V)$ for an $E$-$B$-pair $W''_i$). Under this situation, we claim that the 
map $W(\delta_{[i],A})\rightarrow W''_{i,A}$ which is defined as the composite of $h'_i$ with the canonical 
projection $W(\wedge^iV_A) \rightarrow W''_{i,A}$, is zero. By d\'evissage, it suffices to show that 
the natural map $W(\delta_{[i]})\rightarrow W''_{i}$ is zero.
We first prove this claim under the condition
$(1)$ of Theorem \ref{1.7}. Because we have $\bold{D}_{\mathrm{cris}}(W(\delta_{(\delta_{[i]}(\pi_K))}))^{\varphi^f=\delta_{[i]}(\pi_K)}\not=0$,  it suffices to show that $\bold{D}_{\mathrm{cris}}(W''_{i}(\widetilde{\delta}_{[i]}^{-1}))^{\varphi^f=
\delta_{[i]}(\pi_K)}=0$, which follows from the condition (1) of Theorem \ref{1.7}. 
We next prove the claim under the condition (2) of Theorem 
\ref{1.7}. If the map $W(\delta_{[i]})\rightarrow W''_{i}$ is non zero, we also have an injection 
$W(\delta_{(\delta_{[i]}(\pi_K))})\hookrightarrow W''_i(\widetilde{\delta}_{[i]}^{-1})$, but this injection implies that 
$W''_i(\widetilde{\delta}_{[i]}^{-1})$ has Hodge-Tate weights $\{k_{\sigma}\}_{\sigma\in \mathcal{P}}$ for some $k_{\sigma}\in \mathbb{Z}_{\leqq 0}$, which contradicts the condition (2) of Theorem \ref{1.7}.

By this claim, the map $W(\delta_{[i],A})\hookrightarrow W(\wedge^iV_A)$ factors through 
an $A$-saturated injection $W(\delta_{[i],A})\hookrightarrow (\wedge^{i-1}W_{i-1,A})\otimes W'_{i,A}$. Because we have a natural isomorphism $\wedge^{i-1}W_{i-1,A}
\isom W(\delta_{[i-1],A})$, we obtain an $A$-saturated injection $W(\delta_{i,A})\hookrightarrow W'_{i,A}$. If we define an $A$-$B$-pair 
$W_{i,A}(\subseteq W(V_A))$ as the inverse image of $W(\delta_{i,A})(\subseteq W'_{i,A})$ by the natural projection $W(V_A)\rightarrow W'_{i,A}$, $W_{i,A}$ satisfies all the desired properties, which proves the proposition.

 \end{proof}
 
 \subsection{density of the crystalline points in the local eigenvariety}
 
 We next study the projection map 
 $\mathcal{E}_{\overline{V}}\rightarrow \mathcal{W}^{\times d}$ which we define below. 
 In the case of eigenvarieties, the projection to the weight space is very important for the application 
 to the classicality theorem of overconvergent modular forms, which says that any  overconvergent modular forms with a sufficiently large weight with respect to the slope of $U_p$-eigenvalue is classical modular form. This theorem ensures that the set of classical points is Zariski dense in the corresponding eigenvariety. In this subsection, we prove the local analogues of these properties for the local eigenvariety $\mathcal{E}_{\overline{V}}$.
 
 We first define a morphism $f_0:\mathcal{Z}\rightarrow \mathcal{W}^{\times d}$ by $f(z):=(\delta_1|_{\mathcal{O}_K^{\times}},\cdots,\delta_{d-1}|_{\mathcal{O}_K^{\times}},
 \delta_d|_{\mathcal{O}_K^{\times}})$ for $z:=(x,\delta_1,\cdots,\delta_{d-1})\in\mathcal{Z}$, where, 
 if $x\in X$ corresponds to an $E(x)$-representation $V_x$, we define 
 $\delta_d:=(\mathrm{det}(V_x)\circ\mathrm{rec}_K)\cdot\delta_{[d-1]}^{-1}$. 
 We also denote by $$f:\mathcal{E}_{\overline{V}}\rightarrow \mathcal{W}^{\times d}$$ the composition of $f_0$ with the canonical immersion $\mathcal{E}_{\overline{V}}\hookrightarrow Z$.
 
 \begin{prop}\label{1.9}
 Let $z:=z_{(V_x,T_x)}$ be a point of $\mathcal{E}_{\overline{V}}$ satisfying all the conditions in Theorem \ref{1.7}.
 Then, $f$ is smooth at $z$.
 
 \end{prop}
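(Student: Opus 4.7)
The plan is to translate the desired smoothness of $f$ at $z$ into a formal smoothness assertion at the level of deformation functors, and then to deduce it by a lifting argument essentially identical to the one at the end of the proof of Lemma \ref{1.6}.

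By Theorem \ref{1.7} there is a canonical isomorphism $\hat{\mathcal{O}}_{\mathcal{E}_{\overline{V}}, z} \isom R_{V_x, T_x}$, while $\hat{\mathcal{O}}_{\mathcal{W}^{\times d}, f(z)}$ pro-represents $\prod_{i=1}^d D_{\delta_{i,0}}$, where $\delta_{i,0} := \delta_i|_{\mathcal{O}_K^{\times}}$, and each $R_{\delta_{i,0}}$ is formally smooth over $E$ of dimension $[K:\mathbb{Q}_p]$ (as recalled in the proof of Lemma \ref{1.6}). Under these identifications, the map on completed local rings induced by $f$ corresponds to the morphism of functors
\[
\Phi : D_{V_x, T_x} \longrightarrow \prod_{i=1}^d D_{\delta_{i,0}}, \qquad [(V_A, T_A)] \mapsto \bigl( \delta_{i,A}|_{\mathcal{O}_K^{\times}} \bigr)_{i=1}^d,
\]
where $\{\delta_{i,A}\}_{i=1}^d$ denotes the parameter of $T_A$. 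Since both source and target are formal power series rings over $E$ by Proposition \ref{4.2} and the structure of $\mathcal{W}$, smoothness of $f$ at $z$ will follow from formal smoothness of $\Phi$.

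To verify that $\Phi$ is formally smooth, I would take a small surjection $A \twoheadrightarrow A/I$ in $\mathcal{C}_E$ with $I^2 = 0$, an element $[(V_{A/I}, T_{A/I})] \in D_{V_x, T_x}(A/I)$ with parameter $\{\delta_{i, A/I}\}_{i=1}^d$, and any lift $(\delta'_{i,A})_{i=1}^d \in \prod_i D_{\delta_{i,0}}(A)$ of the tuple $(\delta_{i, A/I}|_{\mathcal{O}_K^{\times}})_{i=1}^d$. I would first extend each $\delta'_{i,A}$ to a continuous character $\delta_{i,A} : K^{\times} \to A^{\times}$ lifting $\delta_{i, A/I}$, which is always possible by picking an arbitrary lift of $\delta_{i, A/I}(\pi_K) \in (A/I)^{\times}$ to $A^{\times}$. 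One then inductively lifts the filtration $0 \subseteq W_{1, A/I} \subseteq \cdots \subseteq W_{d, A/I}$ of $T_{A/I}$ to an $A$-triangulation with parameter $\{\delta_{i,A}\}_{i=1}^d$, exactly as in the last paragraph of the proof of Lemma \ref{1.6}: the only cohomological obstruction $\mathrm{H}^2(G_K, W(\delta_i/\delta_j))$ for $i < j$ vanishes by condition (ii) of Proposition \ref{1.4} together with Proposition 2.9 of \cite{Na10}. Finally, Proposition 1.5.6 of \cite{Ke08} realises the resulting trianguline $A$-$B$-pair as $W(V_A)$ for some $A$-representation $V_A$, and the pair $(V_A, T_A)$ provides the required lift in $D_{V_x, T_x}(A)$ mapping to $(\delta'_{i,A})_{i=1}^d$.

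The genuinely new content in this proposition is minimal: the cohomological input and the inductive construction of the triangulation are already in Lemma \ref{1.6} and Proposition \ref{1.8}. The main potential obstacle is purely bookkeeping, namely making sure that prescribing the weight parts $\delta_{i,A}|_{\mathcal{O}_K^{\times}}$ freely does not interfere with the obstruction-theoretic argument; this is immediate because the obstructions depend only on the ratios $\delta_{i,A}/\delta_{j,A}$ modulo $I$, which are already fixed by $T_{A/I}$, and the freedom in choosing $\delta_{i,A}(\pi_K) \in A^{\times}$ lifting $\delta_{i, A/I}(\pi_K)$ is entirely independent of the lifting of the $W_{i,A/I}$.
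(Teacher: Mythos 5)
Your proof is correct and follows essentially the same route as the paper: reduce to formal smoothness of the natural morphism $D_{V_x,T_x} \to \prod_i D_{\delta_i|_{\mathcal{O}_K^\times}}$, choose arbitrary lifts of $\delta_{i,A/I}(\pi_K)$ to get characters of $K^\times$ with the prescribed restriction to units, then inductively lift the triangulation via the vanishing of $\mathrm{H}^2(G_K, W(\delta_i/\delta_j))$, realizing the resulting $B$-pair as $W(V_A)$ via Kedlaya's Proposition 1.5.6. The only point the paper makes that you pass over lightly is the final descent from formal smoothness over the residue field $E(z)$ to smoothness of $f$ as a morphism of rigid spaces, handled there by noting this lifting property is stable under base change and invoking Proposition 2.9 of \cite{BLR95}; this is a routine finishing step, but worth including.
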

 \begin{proof}
 By Theorem \ref{1.7}, we have 
 an isomorphism $\widehat{\mathcal{O}}_{\mathcal{E}_{\overline{V}},z}\isom R_{V_x,T_x}$.
 Moreover, we have the following natural isomorphism
 $$\widehat{\mathcal{O}}_{\mathcal{W}^{\times d},f(z)}\isom 
 \widehat{\mathcal{O}}_{\mathcal{W},(\delta_1|_{\mathcal{O}_K^{\times}})}\hat{\otimes}_{E(z)}\cdots\hat{\otimes}_{E(z)}\widehat{\mathcal{O}}_{\mathcal{W},(\delta_d|_{\mathcal{O}_K^{\times}})}
 \isom R_{(\delta_1|_{\mathcal{O}_K^{\times}})}\hat{\otimes}_{E(z)}\cdots\hat{\otimes}_{E(z)} R_{(\delta_d|_{\mathcal{O}_K^{\times}})}$$ where $R_{(\delta_i|_{\mathcal{O}_K^{\times}})}$ is the universal deformation ring defined in 
 the proof of Lemma \ref{1.6}. If we identify 
 $$D_{V_x,T_x}(A)=\mathrm{Spf}(R_{V_x,T_x})(A)$$ and $$D_{(\delta_1|_{\mathcal{O}_K^{\times}})}(A)\times\cdots\times D_{(\delta_d|_{\mathcal{O}_K^{\times}})}(A)
 =\mathrm{Spf}(R_{(\delta_1|_{\mathcal{O}_K^{\times}})}\hat{\otimes}_{E(z)}\cdots\hat{\otimes}_{E(z)}R_{(\delta_d|_{\mathcal{O}_K^{\times}})})(A)$$ for $A\in\mathcal{C}_{E(z)}$,
 the local morphism at $z$ induced by $f$ is equal to the morphism $$f_{z}:\mathrm{Spf}(R_{V_x,T_x})\rightarrow \mathrm{Spf}(R_{(\delta_1|_{\mathcal{O}_K^{\times}})}\hat{\otimes}_{E(z)}\cdots\hat{\otimes}_{E(z)} R_{(\delta_d|_{\mathcal{O}_K^{\times}})})$$ whose $A$-valued points are  given by 
 $$D_{V_x,T_x}(A)\rightarrow D_{(\delta_1|_{\mathcal{O}_K^{\times}})}(A)\times\cdots\times D_{(\delta_d|_{\mathcal{O}_K^{\times}})}(A):[(V_A,T_A)]\mapsto 
 (\delta_{1,A}|_{\mathcal{O}_K^{\times}},\cdots,\delta_{d,A}|_{\mathcal{O}_K^{\times}}),$$ where $\{\delta_{i,A}\}_{i=1}^d$ is the parameter of $T_A$. We claim that this morphism is formally smooth. Let $A\in \mathcal{C}_{E(z)}$, and let $I\subseteq A$ be an ideal such that 
 $I\cdot\mathfrak{m}_A=0$. For any $[(V_{A/I},T_{A/I})]\in D_{V_x,T_x}(A/I)$ and 
 $(\delta'_{1,A},\delta'_{2,A},\cdots,\delta'_{d,A})\in D_{(\delta_1|_{\mathcal{O}_K^{\times}})}(A)\times\cdots\times D_{(\delta_d|_{\mathcal{O}_K^{\times}})}(A)$ such that $f_{z}([(V_{A/I},T_{A/I})])\equiv(\delta'_{1,A},\cdots,\delta'_{d,A})(\text{mod } I)$, it suffices to show that there exists a lift 
 $[(V_A,T_A)]\in D_{V_x,T_x}(A)$ of $[(V_{A/I},T_{A/I})]$ such that $f_z([(V_A,T_A)])=(\delta'_{1,A},\cdots,\delta'_{d,A})$.
 Take a lift $\{\delta_{i,A}\}_{i=1}^d$ ($\delta_{i,A}:K^{\times}\rightarrow A^{\times}$) of the parameter $\{\delta_{i,A/I}\}_{i=1}^d$ of $(V_{A/I},T_{A/I})$ such that $\delta_{i,A}|_{\mathcal{O}_K^{\times}}=\delta'_{i,A}$. Because we have $\mathrm{H}^2(G_K,W(\delta_i/\delta_j))
 =0$ for any $i<j$ by Proposition 2.9 of \cite{Na10}, we obtain an equality $\mathrm{H}^2(G_K, W_i(\delta_{i+1}^{-1}))=0$ for any $1\leqq i\leqq d-1$. 
 From this equality for $i=1$, we obtain a surjection $$\mathrm{H}^1(G_K,W(\delta_{1,A}/\delta_{2,A}))\rightarrow \mathrm{H}^1(G_K, W(\delta_{1,A/I}/\delta_{2,A/I})).$$ Hence, if we denote $T_{A/I}:0\subseteq W_{1.A/I}\subseteq W_{2,A/I}\subseteq 
 \cdots \subseteq W_{d,A/I}=W(V_{A/I})$, we can take a lift $[W_{2.A}]\in \mathrm{H}^1(G_K,W(\delta_{1,A}/\delta_{2,A}))$ of $[W_{2,A/I}]
 \in \mathrm{H}^1(G_K, W(\delta_{1,A}/\delta_{2,A}))$. Then, the natural map $\mathrm{H}^1(G_K, W_{2,A}(\delta_{3,A}^{-1}))\rightarrow 
 \mathrm{H}^1(G_K, W_{2,A/I}(\delta_{3,A/I}^{-1}))$ is also a surjection, hence we can take a lift $[W_{3,A}]\in \mathrm{H}^1(G_K, W_{2,A}(\delta_{3,A}^{-1}))$ of $[W_{3,A/I}]\in \mathrm{H}^1(G_K, W_{2,A/I}(\delta_{3,A/I}^{-1}))$.
 Repeating this procedure inductively, we obtain a trianguline $A$-$B$-pair $W_A$ with a triangulation $T_A:0\subseteq W_{1,A}\subseteq W_{2,A}\subseteq \cdots\subseteq W_{d,A}=W_A$ whose parameter is
 $\{\delta_{i,A}\}_{i=1}^d$ such that $[W_{i,A}]\in \mathrm{H}^1(G_K, W_{i-1,A}(\delta_{i,A}^{-1}))$ is a lift of 
 $[W_{i,A/I}]\in \mathrm{H}^1(G_K, W_{i-1,A/I}(\delta_{i,A/I}^{-1}))$ for any $1\leqq i\leqq d$. This shows that $f_{z}$ is formally smooth. 
Because this property is preserved by any base change of the base field $E$, $f$ is smooth at $z$ by 
Proposition 2.9 of \cite{BLR95}.
 
 \end{proof}
 
We next prove a proposition which can be seen as a local analogue of the classicality theorem 
of overconvergent modular forms. Let $(V_x,T_x)$ be a pair as in the above proposition such that $z_{(V_x,T_x)}$ is an $E$-rational point 
 of $\mathcal{E}_{\overline{V}}$. 
 We take an 
 affinoid open neighborhood $U=\mathrm{Spm}(R)\subseteq \mathcal{E}_{\overline{V}}$ of $z_{(V_x,T_x)}$ which is $Y_i$-small 
 for any $1\leqq i\leqq d$, where we define $Y_d:=\mathrm{det}_R(N|_U)(\mathrm{rec}_K(\pi_K))
 \cdot Y_{[d-1]}^{-1}$. Then, for any $1\leqq i\leqq d$, the valuation $v_i:=v_p(\delta'_i(\pi_K))$ 
 is independent of $z'=(V',\delta'_1,\cdots,\delta'_{d-1})\in U$. 
 Take a sufficiently large $k\in\mathbb{Z}_{\geqq 1}$ satisfying the conditions (1), (2),
 \begin{itemize}
 \item[(1)]for any $1\leqq i\leqq d-1$ and $\sigma\in \mathcal{P}$, there exists a short exact sequence
 $$0\rightarrow (\bold{B}^+_{\mathrm{max},K}\hat{\otimes}_{K,\sigma}R)^{\varphi_K=Y_{[i]}}\rightarrow 
 \bold{B}^+_{\mathrm{dR}}/t^k\bold{B}^+_{\mathrm{dR}}\hat{\otimes}_{K,\sigma}R \rightarrow U_{i,\sigma}\rightarrow 0$$ 
 of Banach $R$-modules with the property (Pr)
 for a Banach $R$-module $U_{i,\sigma}$. 
 \item[(2)]$k> \frac{\mathrm{max}\{2, (d-1)^2\}}{f} \mathrm{max}_{1\leqq i\leqq d}\{ |v_i|\}$.
 
 \end{itemize}
Fix $k$ which satisfies the above conditions. We define a subset $\mathcal{W}^{\times d}_{k}$ of $\mathcal{W}^{\times d}$ by 
\begin{multline*}
\mathcal{W}^{\times d}_{k}:=\{(\prod_{\sigma\in\mathcal{P}}\sigma^{k'_{1,\sigma}}, \prod_{\sigma\in \mathcal{P}}\sigma^{k'_{2,\sigma}},
 \cdots, \prod_{\sigma\in \mathcal{P}}\sigma^{k'_{d,\sigma}}) \in \mathcal{W}^{\times d} | k'_{i,\sigma}\in \mathbb{Z},\\
   k'_{i,\sigma}-k'_{i+1,\sigma}>k \text{ for any } 
 1\leqq i\leqq d-1\text{ and } \sigma\in \mathcal{P} \}.
 \end{multline*}

Under this definition, we prove the following proposition which can be seen as a local analogue 
of the classicality theorem of overconvergent modular forms, which is crucial to prove the 
density of the crystalline points in $\mathcal{E}_{\overline{V}}$ (see Theorem \ref{1.13} below).

\begin{prop}\label{1.10}
Under the above situation, for any $z':=(V',\delta'_1,\cdots,\delta'_{d-1})\in U\cap f^{-1}(\mathcal{W}^{\times d}_{k})$, $V'$ is a crystalline and split trianguline $E(z')$-representation with a triangulation $T'$ whose parameter is 
$\{\delta'_i\}_{i=1}^d$, 
i.e. $z'=z_{(V', \mathcal{T'})}$.

\end{prop}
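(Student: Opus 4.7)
The plan is to bootstrap the crystalline periods produced by Proposition \ref{1.2}(3) into both a triangulation and crystallinity of $V'$, using the very strong regularity forced by $f(z')\in\mathcal{W}^{\times d}_k$. Since $z'\in\mathcal{E}_{\overline{V}}\subseteq\mathcal{Z}_0$ lies in $U=\mathrm{Spm}(R)$, Proposition \ref{1.2}(3) applied at the fiber $z'$ guarantees that for every $1\leqq i\leqq d-1$ and $\sigma\in\mathcal{P}$ the space $(\bold{B}^+_{\mathrm{max},K}\otimes_{K,\sigma}N_i(z'))^{G_K,\varphi_K=Y_{[i]}(z')}$ is non-zero. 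Untwisting by $\widetilde{\delta}'_{[i]}$ and using Proposition \ref{4.1}, this produces for each $i$ a non-zero $G_K$-equivariant morphism of $E(z')$-$B$-pairs
\[
\iota_i : W(\delta'_{[i]})\longrightarrow \wedge^i W(V').
\]

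Next I would exploit the weight hypothesis $f(z')\in\mathcal{W}^{\times d}_k$. This forces $\delta'_i|_{\mathcal{O}_K^\times}=\prod_{\sigma}\sigma^{k'_{i,\sigma}}$ for integers $k'_{i,\sigma}$ with $k'_{i,\sigma}-k'_{i+1,\sigma}>k$, so each $W(\delta'_i)$ is crystalline and the $\sigma$-weight $\sum_{j=1}^{i}k'_{j,\sigma}$ of $\delta'_{[i]}$ strictly dominates $\sum_{j\in J}k'_{j,\sigma}$ for every other subset $J\subseteq[d]$ of size $i$, the gap being at least $k$. Combined with the $Y_i$-smallness and the bound $k>\mathrm{max}\{2,(d-1)^2\}\mathrm{max}_i|v_i|/f$, this regularity rules out any other crystalline line in $\wedge^i W(V')$ with $\varphi^f$-eigenvalue $\delta'_{[i]}(\pi_K)$ and the required $\sigma$-Hodge--Tate weight, so $\iota_i$ is forced to be saturated.

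Third, I would construct the triangulation of $W(V')$ by induction on $i$, mimicking the argument in the proof of Proposition \ref{1.8}. For $i=1$, set $W'_1:=\mathrm{Im}(\iota_1)$, which is saturated and isomorphic to $W(\delta'_1)$. Assuming $W'_1\subseteq\cdots\subseteq W'_{i-1}\subseteq W(V')$ have been built with graded pieces $W(\delta'_j)$, consider the cokernel $W''_i$ of the canonical injection $\wedge^{i-1}W'_{i-1}\otimes(W(V')/W'_{i-1})\hookrightarrow\wedge^i W(V')$. The weight analysis of the previous paragraph shows that every rank-one subquotient of $W''_i$ has $\sigma$-weights strictly smaller than those of $W(\delta'_{[i]})$, so the composite $W(\delta'_{[i]})\xrightarrow{\iota_i}\wedge^i W(V')\twoheadrightarrow W''_i$ vanishes. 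Hence $\iota_i$ factors through $W(\delta'_{[i-1]})\otimes(W(V')/W'_{i-1})$, producing a saturated injection $W(\delta'_i)\hookrightarrow W(V')/W'_{i-1}$ whose preimage is the required $W'_i$. This step is the main obstacle: it is the Bella\"iche--Chenevier exterior-product trick, and the role of the $k$-gap hypothesis together with the $v_i$-bound is precisely to eliminate every competing crystalline line in $\wedge^i W(V')$ that would obstruct this factorization.

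Finally, once the triangulation $T'$ with parameter $\{\delta'_i\}_{i=1}^d$ is in place, each graded piece $W(\delta'_i)$ is crystalline, and each successive extension class lies in $\mathrm{H}^1(G_K,W'_{i-1}(\delta_i'^{-1}))$. Applying Kedlaya's slope filtration theorem together with the admissibility criterion, exactly as used in Propositions \ref{1.11} and \ref{1.12}, the combination of integer Hodge--Tate weights satisfying the gap $k'_{i,\sigma}-k'_{i+1,\sigma}>k$ and the slope bound on $k$ forces every such extension class to be crystalline; equivalently, the filtered $\varphi$-module extracted from $T'$ is weakly admissible, so $V'$ is crystalline. This concludes $z'=z_{(V',T')}$.
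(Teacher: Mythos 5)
Your overall plan is the right one and follows the paper's architecture (crystalline periods $\Rightarrow$ saturated injections into exterior powers $\Rightarrow$ triangulation $\Rightarrow$ crystallinity), but two of the three transitions have genuine gaps.

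The saturation step is the more serious problem. You invoke Proposition \ref{1.2}(3) to get a \emph{non-zero} period, and then argue that the weight-gap hypothesis together with the bound on $k$ ``rules out any other crystalline line, so $\iota_i$ is forced to be saturated.'' This does not follow: a non-saturated $\iota_i$ has saturation isomorphic to $W(\delta'_{[i]}\prod_\sigma\sigma^{-k_\sigma})$ for some $k_\sigma\geq 0$ not all zero (Proposition 2.14 of \cite{Na09}), which is a crystalline line with the \emph{same} $\varphi^f$-eigenvalue but \emph{smaller} Hodge--Tate weights, so weight separation among the $\delta'_J$ alone does not exclude it; ruling it out by a slope inequality as you suggest would need a bound calibrated to the saturation step, and the $(d-1)^2/f$ constant in the definition of $k$ is set up for the later exterior-power factorization, not for this. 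The paper instead uses the full force of Proposition \ref{1.2}(1): after base change via Corollary 2.6 of \cite{Ki03} (which requires $Q_{i,\sigma}(j)(z')\neq 0$, guaranteed by $f(z')\in\mathcal{W}^{\times d}_k$), the crystalline period space is identified with the \emph{one-dimensional} space of $\bold{B}^+_{\mathrm{dR}}/t^k$-periods, and the injectivity of $(\bold{B}^+_{\mathrm{max},K}\otimes_{K,\sigma}E(z'))^{\varphi_K=\delta'_{[i]}(\pi_K)}\hookrightarrow\bold{B}^+_{\mathrm{dR}}/t^k\bold{B}^+_{\mathrm{dR}}\otimes_{K,\sigma}E(z')$ then forces the filtration on $D_i$ to satisfy $\mathrm{Fil}^0(K\otimes_{K_0}D_i)=K\otimes_{K_0}D_i$ and $\mathrm{Fil}^1(K\otimes_{K_0}D_i)=0$; Lemma 2.21 of \cite{Na10} converts this into the needed \emph{saturated} injection. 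You cannot get this from Proposition \ref{1.2}(3) alone.

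The final crystallinity step is also mislabeled. Lemma \ref{1.12}, which the paper applies, is proved by a Bloch--Kato dimension count showing $\mathrm{H}^1_f(G_K,W_{d-1}(\delta_d^{-1}))=\mathrm{H}^1(G_K,W_{d-1}(\delta_d^{-1}))$; it is \emph{not} Kedlaya's slope filtration theorem nor a weak-admissibility criterion (indeed, ``extracting a filtered $\varphi$-module from $T'$'' presupposes the crystallinity you are trying to prove). Kedlaya's theorem enters only in Proposition \ref{1.11}, to construct the triangulation; and the slope inequality in the paper's proof of Proposition \ref{1.10} is used solely to verify hypothesis (2) of Lemma \ref{1.12}, not to prove crystallinity directly. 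Finally, a small organizational point: rather than re-proving the inductive exterior-power factorization inside Proposition \ref{1.10}, the paper factors it through Proposition \ref{1.11}, whose hypotheses are precisely the saturated injections established above together with the weight/slope conditions read off from $\mathcal{W}^{\times d}_k$; you would do better to cite it.
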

\begin{proof}
First, by the definition of $U$ and by Proposition \ref{1.2}, we have an isomorphism
$$(\bold{B}^+_{\mathrm{max},K}\hat{\otimes}_{K,\sigma}(N_i\otimes_{\mathcal{O}_{\mathcal{Z}_0}}R))^{G_K,\varphi_K=Y_{[i]}}
\isom (\bold{B}^+_{\mathrm{dR}}/t^k\bold{B}^+_{\mathrm{dR}}\hat{\otimes}_{K,\sigma}(N_i\otimes_{\mathcal{O}_{\mathcal{Z}_0}}R))^{G_K}$$ 
for any $1\leqq i\leqq d-1$ and $\sigma\in \mathcal{P}$. Because we have 
$f(z')\in \mathcal{W}^{\times d}_{k}$, we have $Q_{i,\sigma}(j)(z)\not= 0$ 
for any $\sigma\in\mathcal{P}$, $1\leqq i\leqq d-1$ and $-k\leqq j\leqq 0$. Hence, by Corollary 2.6 of  \cite{Ki03}, the natural base change map is an isomorphism
$$(\bold{B}^+_{\mathrm{dR}}/t^k\bold{B}^+_{\mathrm{dR}}\hat{\otimes}_{K,\sigma}(N_i\otimes_{\mathcal{O}_{\mathcal{Z}_0}}R))^{G_K}\otimes_R E(z')\isom 
(\bold{B}^+_{\mathrm{dR}}/t^k\bold{B}^+_{\mathrm{dR}}\otimes_{K,\sigma}(\wedge^iV')(\widetilde{\delta}^{' -1}_{[i]}))^{G_K}$$ 
 between one-dimensional vector spaces over $E(z')$ for each $\sigma$ and $i$. The natural map $$(\bold{B}^+_{\mathrm{max},K}\otimes_{K,\sigma}E(z'))^{\varphi_K=
\delta^{' }_{[i]}(\pi_K)}\hookrightarrow \bold{B}^+_{\mathrm{dR}}/t^k\bold{B}^+_{\mathrm{dR}}\otimes_{K,\sigma}E(z')$$ is an injection by the definition of $U$.
From these facts, the natural map 
$$(\bold{B}^+_{\mathrm{max},K}\hat{\otimes}_{K,\sigma}(\wedge^iV')(\widetilde{\delta}^{' -1}_{[i]}))^{G_K, \varphi_K=
\delta'_{[i]}(\pi_K)}\isom (\bold{B}^+_{\mathrm{dR}}/t^k\bold{B}^+_{\mathrm{dR}}\otimes_{K,\sigma}(\wedge^iV')(\widetilde{\delta}^{' -1}_{[i]}))^{G_K}$$ 
is an isomorphism for each $\sigma\in \mathcal{P}$ and $1\leqq i\leqq d-1$. From this isomorphism and because  we have $f(z')\in \mathcal{W}^{\times d}_k$, 
we can check that, for each $1\leqq i\leqq d-1$,
$$D_i:=\bold{D}^+_{\mathrm{cris}}((\wedge^iV')(\widetilde{\delta}^{' -1}_{[i]}))^{\varphi^f=\delta'_{[i]}(\pi_K)}$$ is a sub rank one 
$E(z')$-filtered $\varphi$-module of $\bold{D}_{\mathrm{cris}}((\wedge^iV')(\widetilde{\delta}^{' -1}_{[i]}))$ such that $\mathrm{Fil}^0(K\otimes_{K_0}D_i)=
K\otimes_{K_0}D_i$ and $\mathrm{Fil}^1(K\otimes_{K_0}D_i)=0$. Hence, by Lemma 2.21 of \cite{Na10}, we obtain a saturated injection 
$$W(\delta_{(\delta'_{[i]}(\pi_K))})\hookrightarrow W((\wedge^iV')(\widetilde{\delta}^{' -1}_{[i]})) $$ 
and, twisting by $\widetilde{\delta}'_{[i]}$, we also obtain a following saturated injection
$$W(\delta'_{[i]})\hookrightarrow W(\wedge^iV')$$ for each $1\leqq i\leqq d-1$. 
By Proposition \ref{1.11} below, then $V'$ is a split trianguline $E(z')$-representation with a triangulation 
$T':0\subseteq W'_1\subseteq W'_2\subseteq \cdots \subseteq W'_d=W(V')$ 
whose parameter is equal to $\{\delta'_i\}_{i=1}^d$. To finish the proof, it suffices to show that $W'_i$ is crystalline for any 
$1\leqq i\leqq d$ by induction on $i$.
 By Lemma \ref{1.12} below, it suffices to check that the parameter $\{\delta'_i\}_{i=1}^d$ satisfies the conditions (1) and (2) in this lemma.
For (1), it is trivial by the definition of $\mathcal{W}^{\times d}_{k}$. For (2), if 
$\delta'_i/\delta'_j=\prod_{\sigma\in \mathcal{P}}\sigma^{k_{\sigma}}
|N_{K/\mathbb{Q}_p}|_p$ for some $\{k_{\sigma}\}_{\sigma\in \mathcal{P}}\in \prod_{\sigma\in \mathcal{P}}\mathbb{Z}_{\geqq 1}$ and for some $1\leqq i<j\leqq d$, then 
$k_{\sigma}=k_{i,\sigma}-k_{j,\sigma}\geqq k+1$ for any $\sigma\in\mathcal{P}$. Hence the slope 
of $W(\delta'_i/\delta'_j)$ is $\frac{1}{[K:\mathbb{Q}_p]}(\sum_{\sigma\in \mathcal{P}}k_{\sigma})-1\geqq k$. On the other hands, 
the slope of $W(\delta'_i/\delta'_j)$ can be computed by $\frac{1}{f}(v_p(\delta'_i(\pi_K))-v_p(\delta'_j(\pi_K)))< k$, which is a contradiction.
Hence $\{\delta'_i\}_{i=1}^d$ satisfies (1), (2) of Lemma \ref{1.12}, hence $V'$ is crystalline.

\end{proof}

\begin{prop}\label{1.11}
Let $z:=(V,\delta_1,\cdots,\delta_{d-1})\in\mathcal{Z}_0$ be a point which satisfies 
the following conditions $(1)$ and $(2)$.
\begin{itemize}
\item[(1)]For each $1\leqq i\leqq d-1$, there exists a saturated injection 
$$W(\delta_{[i]})\hookrightarrow W(\wedge^i V).$$
\item[(2)]One of the following conditions holds.
\begin{itemize}
\item[(i)]
For any $1\leqq i\leqq d-1$ and for any $J\not=[i]\subseteq [d]$ such that $\sharp(J)=i$,
$k(\delta_{[i]})_{\sigma}-k(\delta_{J})_{\sigma}\not\in \mathbb{Z}_{\geqq 0}$ for any $\sigma\in\mathcal{P}$.
\item[(ii)]For any $1\leqq i\leqq d$ and $\sigma\in\mathcal{P}$, $k(\delta_{i})_{\sigma}$ is an integer,  and , if we put 
$v_0:=\mathrm{max}_{1\leqq i\leqq d}\{|v_p(\delta_i(\pi_K))|\}$, $k(\delta_{i})_{\sigma}-k(\delta_{i+1})_{\sigma}> \frac{(d-1)^2}{f}v_0$ for 
any $1\leqq i \leqq d-1$ and $\sigma\in \mathcal{P}$,

\end{itemize}

\end{itemize}
Then $V$ is a split trianguline $E(z)$-representation with 
a triangulation $T$ whose parameter is $\{\delta_i\}_{i=1}^d$, i.e. $z=z_{(V,T)}$.

\end{prop}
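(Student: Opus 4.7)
My plan is to construct a triangulation $T : 0 \subseteq W_1 \subseteq \cdots \subseteq W_d = W(V)$ with $W_i/W_{i-1} \cong W(\delta_i)$ by induction on $i$. For the base case $i = 1$, hypothesis (1) supplies a saturated injection $W(\delta_1) \hookrightarrow W(V)$ whose image I take as $W_1$. Once $W_1 \subset \cdots \subset W_{d-1}$ have been constructed, I set $W_d := W(V)$; the isomorphism $W(V)/W_{d-1} \cong W(\delta_d)$ then follows by comparing the two expressions $\wedge^d W(V) \cong W(\delta_{[d-1]}) \otimes (W(V)/W_{d-1})$ (from the top piece of the exterior-power filtration applied to $0 \to W_{d-1} \to W(V) \to W(V)/W_{d-1} \to 0$) and $\wedge^d W(V) = W(\det V \circ \mathrm{rec}_K) = W(\delta_{[d-1]}\cdot \delta_d)$ coming from the definition $\delta_d = (\det V \circ \mathrm{rec}_K) \cdot \delta_{[d-1]}^{-1}$.

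For the inductive step from $W_{i-1}$ to $W_i$, set $W' := W(V)/W_{i-1}$. The exterior-power filtration applied to $0 \to W_{i-1} \to W(V) \to W' \to 0$ produces an exact sequence
\[
0 \to W(\delta_{[i-1]}) \otimes W' \to \wedge^i W(V) \to W''_i \to 0,
\]
using the inductive identification $\wedge^{i-1} W_{i-1} \cong W(\delta_{[i-1]})$. Hypothesis (1) supplies a saturated injection $W(\delta_{[i]}) \hookrightarrow \wedge^i W(V)$; granting that its composite into $W''_i$ vanishes, the injection factors through $W(\delta_{[i-1]}) \otimes W'$, and twisting by $\delta_{[i-1]}^{-1}$ yields a saturated injection $W(\delta_i) \hookrightarrow W'$ whose preimage in $W(V)$ under $W(V) \twoheadrightarrow W'$ is the required $W_i$.

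The main obstacle, on which everything hinges, is showing that this composite $W(\delta_{[i]}) \to W''_i$ is zero, and the two alternative hypotheses in (2) are tailored precisely to this purpose. Reading off the successive quotients $\wedge^j W_{i-1} \otimes \wedge^{i-j} W'$ ($0 \leqq j \leqq i-2$) of the natural filtration on $W''_i$, using the inductive identification of the Hodge-Tate weights of $W_{i-1}$ as $\{k(\delta_j)_\sigma\}_{j=1}^{i-1}$ together with the $\mathcal{Z}_0$-condition giving Sen's polynomial $\prod_{j=1}^d (T - k(\delta_j)_\sigma)$ for $V$ (so $W'$ has weights $\{k(\delta_j)_\sigma\}_{j=i}^d$), I obtain the generalized Hodge-Tate weights of $W''_i$ at $\sigma$ as $\{k(\delta_J)_\sigma : J \subseteq [d],\ \sharp(J) = i,\ J \not\supseteq [i-1]\}$. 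Under hypothesis (2)(i), the argument then parallels the proof of Proposition \ref{1.8} under its Theorem \ref{1.7}(2) condition: a non-zero composite would produce, after saturation and twisting by $\widetilde{\delta}_{[i]}^{-1}$, an injection $W(\delta_{(\delta_{[i]}(\pi_K))}) \hookrightarrow W''_i(\widetilde{\delta}_{[i]}^{-1})$, forcing some $k(\delta_J)_\sigma - k(\delta_{[i]})_\sigma$ with $J \not\supseteq [i-1]$ (and hence $J \neq [i]$) to lie in $\mathbb{Z}_{\leqq 0}$, contradicting (2)(i). Under hypothesis (2)(ii) the argument is more delicate: I pass to the associated $(\varphi, \Gamma)$-modules over the Robba ring and invoke Kedlaya's slope filtration theorem, the point being that the slope of $W(\delta_{[i]})$ equals $\frac{1}{f} v_p(\delta_{[i]}(\pi_K))$ while the HT-weight separation $k(\delta_i)_\sigma - k(\delta_{i+1})_\sigma > (d-1)^2 v_0/f$ is precisely calibrated so that, once one bounds the slopes realizable inside $W''_i$ via the computed weights of its graded pieces, no rank-one sub-$B$-pair of $W''_i$ can have the requisite slope. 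This slope-matching argument under (2)(ii), where the constant $(d-1)^2/f$ becomes indispensable, is the technically hardest step of the proof.
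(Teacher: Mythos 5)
Your proposal mirrors the paper's proof quite closely: the same inductive construction of $W_1\subseteq\cdots\subseteq W_{d-1}$, the same exterior-power exact sequence
$0\to W(\delta_{[i-1]})\otimes W'\to\wedge^iW(V)\to W''_i\to 0$
with $W''_i$ built from $\wedge^jW_{i-1}\otimes\wedge^{i-j}W'$ for $0\leqq j\leqq i-2$, the same reduction of the inductive step to showing the composite $\iota:W(\delta_{[i]})\to W''_i$ vanishes, and the same two mechanisms for proving that vanishing. Under (2)(i) your argument agrees with the paper's (and your description of the weight set of $W''_i$ as indexed by $\{J\subseteq[d]:\sharp J=i,\,J\not\supseteq[i-1]\}$ is actually a bit more precise than the paper's formulation, while consistent with it).

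However, under (2)(ii) you have identified the right ingredients but not actually carried out the step that you yourself call ``the technically hardest.'' The paper's argument under (2)(ii) is a two-sided slope bound: first, if $\iota\neq 0$, the saturation of its image is $W(\delta_{[i]}\prod_\sigma\sigma^{-k_\sigma})$ for some $\{k_\sigma\}\in\prod_\sigma\mathbb{Z}_{\geqq 0}$, and the $\mathcal{Z}_0$-condition together with (2)(ii) forces $k_\sigma>\frac{(d-1)^2}{f}v_0$, so this saturation has slope $<\frac{i-(d-1)^2}{f}v_0$, which bounds the smallest slope $s''$ of the relevant graded piece of $W''_i$ from above. Second, the étaleness of $W(\wedge^iV)$ together with the injection $W(\delta_{[i-1]})\otimes W'\hookrightarrow W(\wedge^iV)$ and the nonnegativity of the slopes of $W_{i-1}$ give $s''\geqq\min\{is',2s'\}\geqq-\frac{(i-1)i}{f}v_0$ via Kedlaya's results. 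Comparing the bounds gives $(d-1)^2<i^2$, a contradiction for $i\leqq d-1$. Your sketch conflates these two bounds (the slope bound on $W''_i$ comes from Kedlaya's étale/sub-quotient slope constraints, not from ``the computed weights of its graded pieces,'' which instead enter via the $k_\sigma$ of the saturation), and it speaks of ``the requisite slope'' of a sub-$B$-pair as if it were $\frac{1}{f}v_p(\delta_{[i]}(\pi_K))$, whereas the saturation of the image of $\iota$ has slope strictly lower by $\frac{1}{[K:\mathbb{Q}_p]}\sum_\sigma k_\sigma$, and this drop is precisely where the hypothesis on the weight gaps enters. As written the (2)(ii) case is a plan, not a proof; you would need to supply both slope inequalities and the final numerical comparison to complete it.
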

\begin{proof}
First, by the condition (1) for $i=1$, we have a saturated injection 
$W(\delta_1)\hookrightarrow W(V)$. We denote by $W_1\subseteq W(V)$ the image of this 
injection. By induction on $i$, we show that we can take a filtration 
$0\subseteq W_1\subseteq W_2\subseteq \cdots W_{i-1}\subseteq W_i\subseteq W(V)$ such that 
$W_i$ is a $E(z)$-$B$-pair of rank $i$ which is saturated in $W(V)$ and 
$W_{i}/W_{i-1}\isom W(\delta_{i})$ and $\wedge^iW_i\subseteq W(\wedge^iV)$ is equal to the 
image of the given injection $W(\delta_{[i]})\hookrightarrow W(\wedge^iV)$ in (1).
 We assume that we can take $0\subseteq W_1\subseteq \cdots \subseteq W_{i-1}\subseteq W(V)$ satisfying all the above 
 conditions. Denote by $W'$ the cokernel of the injection $W_{i-1}\subseteq W(V)$. If we take the $i$-th exterior product, 
 we obtain a following short exact sequence of $E(z)$-$B$-pairs
 $$0\rightarrow W(\delta_{[i-1]})\otimes W'\rightarrow W(\wedge^iV)\rightarrow W''\rightarrow 0$$
  for a $E(z)$-$B$-pair $W''$ because we have a natural isomorphism
 $\wedge^{i-1}W_{i-1}\isom W(\delta_{[i-1]})$. Then, $W''$ is a successive extension of 
 $\wedge^jW_{i-1}\otimes \wedge^{i-j}W'$ for $0\leqq j\leqq i-2$. 
We define a map $\iota:W(\delta_{[i]})\rightarrow 
 W''$ as the composition of the injection $W(\delta_{[i]})\hookrightarrow W(\wedge^iV)$ in $(1)$ with the canonical surjection $W(\wedge^i V)\twoheadrightarrow W''$. Under this situation, we claim that the map $\iota:W(\delta_{[i]})\rightarrow 
 W''$ is zero under the condition (2). 
 
 We first prove this claim under the condition (i) of (2). 
 In this case, if $\iota$ is not zero, then this is an injection because $W(\delta_{[i]})$ is of rank one. 
 By Proposition 2.14 of \cite{Na09}, the saturation of the image of $\iota$ is isomorphic to $W(\delta_{[i]}\prod_{\sigma\in \mathcal{P}}
 \sigma^{-k_{\sigma}})$ for some $\{k_{\sigma}\}_{\sigma\in \mathcal{P}}\in \prod_{\sigma\in \mathcal{P}}\mathbb{Z}_{\geqq 0}$. 
 This implies that $W''$ has Hodge-Tate weights $\{k(\delta_{[i]})_{\sigma} -k_{\sigma}\}_{\sigma\in \mathcal{P}}$. However, because we have $z\in\mathcal{Z}_0$ 
 and the set of Hodge-Tate weights of $W_{i-1}$ is $\{k(\delta_{1})_{\sigma},\cdots,k(\delta_{i-1})_{\sigma}\}_{\sigma\in \mathcal{P}}$,  each $\sigma$-part of the Hodge-Tate weights of 
 $W''$ is equal to $k(\delta_{J})_{\sigma}$ for some $J\not=[i]$ such that $\sharp(J)=i$, which  contradicts to 
 the condition (i), hence the map $\iota$ must be zero.
 
 We next prove the claim under the condition (ii). We assume that the map $\iota$ is not zero. Because 
 $W''$ is a successive extension of 
 $\wedge^jW_{i-1}\otimes \wedge^{i-j}W'$ for $0\leqq j\leqq i-2$, we obtain 
 a saturated injection $$W(\delta_{[i]}\prod_{\sigma\in \mathcal{P}}\sigma^{-k_{\sigma}})\hookrightarrow \wedge^jW_{i-1}\otimes \wedge^{i-j}W'$$ for some $0\leqq j\leqq i-2$ and for some 
 $\{k_{\sigma}\}_{\sigma\in \mathcal{P}}\in \prod_{\sigma\in \mathcal{P}}\mathbb{Z}_{\geqq 0}$. Because, each
 $\sigma$-part of the Hodge-Tate weight of $W''$ is equal to $k(\delta_{J})_{\sigma}$ for some 
 $J\not=[i]$ such that $\sharp(J)=i$, there exists such a $J$ such that 
 $$k_{\sigma}=k(\delta_{[i]})_{\sigma}-k(\delta_{J})_{\sigma}>\frac{(d-1)^2}{f}v_0$$ by the condition (ii). Because the slope of $W(\delta_{[i]}
 \prod_{\sigma\in \mathcal{P}}\sigma^{-k_{\sigma}})$ is equal to $$\frac{1}{f}v_p(\delta_{[i]}(\pi_K)) -\frac{1}{[K:\mathbb{Q}_p]}
 \sum_{\sigma\in \mathcal{P}}k_{\sigma}< \frac{i}{f}v_0-\frac{(d-1)^2}{f}v_0= \frac{i-(d-1)^2}{f}v_0,$$ 
so the smallest slope, which we denote by $s''$, of $ \wedge^jW_{i-1}\otimes \wedge^{i-j}W'$ satisfies $$s''< \frac{i-(d-1)^2}{f}v_0.$$ 
 On the other hands, because we have an injection $W(\delta_{[i-1]})\otimes W'\hookrightarrow W(\wedge^iV)$ and $W(\wedge^iV)$ is \'etale, 
 the smallest slope $s'$ of $W'$ satisfies 
 $$s'\geqq -\frac{1}{f}v_p(\delta_{[i-1]}(\pi_K))\geqq -\frac{(i-1)}{f}v_0$$ by Corollary 1.6.9 
 of \cite{Ke08}. Because all the slopes of $W_{i-1}$ are positive or zero by Corollary 1.6.9 of \cite{Ke08}, 
 then the smallest slope  
 $s''$ of $ \wedge^jW_{i-1}\otimes \wedge^{i-j}W'$ satisfies that 
 $$s''\geqq \mathrm{min}\{is', 2s'\}\geqq -\frac{(i-1)i}{f}v_0$$ by Remark 1.7.2 of \cite{Ke08}. Hence, we obtain an inequality $-\frac{(i-1)i}{f}v_0\leqq s'' < \frac{i-(d-1)^2}{f}v_0$, which implies that 
 $(d-1)^2<i^2$, this is a contradiction, hence the map $\iota$ must be zero. We finish to 
 prove the claim in both cases.
 
 This claim implies that the given injection $W(\delta_{[i]})\hookrightarrow W(\wedge^iV)$ factors through a saturated injection 
 $$W(\delta_{[i]})\hookrightarrow W(\delta_{[i-1]})\otimes W'\hookrightarrow W(\wedge^iV).$$ Twisting the first injection by 
 $\delta_{[i-1]}^{-1}$, we obtain a saturated injection $W(\delta_i)\hookrightarrow W'$. If we denote by $W_i\subseteq W(V)$ the inverse image of 
 $W(\delta_i)\subseteq W'$ by the canonical surjection $W(V)\twoheadrightarrow W'$, we obtain a short exact sequence
 $$0\rightarrow W_{i-1}\rightarrow W_i\rightarrow W(\delta_i)\rightarrow 0,$$ and 
 we can check that $W_i$ satisfies the desired properties. 
 By induction, we finish to prove the proposition.

\end{proof}
\begin{lemma}\label{1.12}
Let $W$ be a split trianguline $E$-$B$-pair  of rank $d$ with a triangulation 
$T:0\subseteq W_1\subseteq \cdots \subseteq W_{d-1}\subseteq W_d=W$ with the parameter $\{\delta_i\}_{i=1}^d$. If $\{\delta_i\}_{i=1}^d$ 
satisfies the following conditions (1) and (2), 
\begin{itemize}
\item[(1)]for any $1\leqq i\leqq d$, $\delta_i|_{\mathcal{O}_K^{\times}}=
\prod_{\sigma\in \mathcal{P}}\sigma^{k_{i,\sigma}}$ for some $\{k_{i,\sigma}\}_{\sigma\in \mathcal{P}}
\in \prod_{\sigma\in \mathcal{P}}\mathbb{Z}$ such that $k_{1,\sigma}>k_{2,\sigma}>\cdots>k_{d,\sigma}$ for any  $\sigma\in \mathcal{P}$,
\item[(2)]for any $1\leqq i<j\leqq d$, $\delta_i/\delta_j\not= \prod_{\sigma\in \mathcal{P}}\sigma^{k_{\sigma}}|N_{K/\mathbb{Q}_p}|_p$ 
for any $\{k_{\sigma}\}_{\sigma\in \mathcal{P}}\in \prod_{\sigma\in \mathcal{P}}\mathbb{Z}_{\geqq 1}$,

\end{itemize}
then $W$ is crystalline.

\end{lemma}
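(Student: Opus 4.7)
\emph{Plan.} I would prove the lemma by induction on $d$.

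For the base case $d = 1$: by condition (1), $\delta_1|_{\mathcal{O}_K^\times} = \prod_{\sigma} \sigma^{k_{1,\sigma}}$, so the associated character $\widetilde{\delta}_1 : G_K^{\mathrm{ab}} \to E^\times$ is an unramified twist of the Lubin--Tate-power character $\prod_{\sigma} \sigma(\chi_{\mathrm{LT}})^{k_{1,\sigma}}$, which is crystalline by Fontaine's theorem (as already used in the proof of Proposition \ref{a-3}). Both the unramified character and the Lubin--Tate-power piece are crystalline, hence so is $\widetilde{\delta}_1$ and therefore $W(\delta_1)$. The same argument shows that each individual rank-one piece $W(\delta_i)$ is crystalline.

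For the inductive step, suppose $W_{d-1}$ is crystalline; the truncated parameters $\delta_1, \dots, \delta_{d-1}$ evidently still satisfy (1) and (2). The short exact sequence
\begin{equation*}
0 \to W_{d-1} \to W \to W(\delta_d) \to 0
\end{equation*}
is classified by a class $c \in H^1(G_K, W_{d-1}(\widetilde{\delta}_d^{-1}))$, and $W$ is crystalline if and only if $c$ lies in the crystalline subspace $H^1_{\mathrm{cris}} \subseteq H^1$. Using the filtration $0 \subset F_1 \subset \dots \subset F_{d-1} = W_{d-1}(\widetilde{\delta}_d^{-1})$ with $F_j/F_{j-1} \cong W(\delta_j/\delta_d)$ and chasing the long exact sequences for $H^*$ and $H^*_{\mathrm{cris}}$, this reduces (by d\'evissage as in the proof of Proposition \ref{1.4}) to showing, for each $j < d$,
\begin{equation*}
H^2(G_K, W(\delta_j/\delta_d)) = 0 \quad \text{and} \quad H^1_{\mathrm{cris}}(G_K, W(\delta_j/\delta_d)) = H^1(G_K, W(\delta_j/\delta_d)).
\end{equation*}

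For each $j < d$, the character $\delta_j/\delta_d$ has strictly positive Hodge--Tate weights $k_{j,\sigma} - k_{d,\sigma} > 0$ for every $\sigma \in \mathcal{P}$ by condition (1), and fails the exceptional cyclotomic-type form $|N_{K/\mathbb{Q}_p}|_p \prod_{\sigma} \sigma^{k_\sigma}$ with $k_\sigma \geq 1$ by condition (2). Both required statements then follow from the rank-one Bloch--Kato cohomology computation for $B$-pairs over a general $p$-adic field, along the lines of \cite{Na09} and \cite{Na10}: conditions (1) and (2) are Tate-dual to each other (under the duality $W(\delta) \mapsto W(\chi_p \delta^{-1})$) and, taken together, exclude precisely the exceptional rank-one characters where either $H^0$ or $H^2$ is nonzero, or where the comparison $H^1_{\mathrm{cris}} = H^1$ fails. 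The main obstacle is exactly this rank-one calculus: carrying out the Bloch--Kato dimension formulas for rank-one $B$-pairs over general $K$ with its multi-embedding weight structure $\mathcal{P}$, and verifying that conditions (1) and (2) together pin down exactly the non-exceptional locus, is the technical heart of the argument.
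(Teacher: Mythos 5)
Your overall strategy --- induction on $d$, crystallinity of each rank-one $W(\delta_i)$ from condition (1), and showing that the extension class lies in Bloch--Kato's $H^1_f$ --- matches the paper's. But the way you propose to verify $H^1_f(G_K, W_{d-1}(\widetilde{\delta}_d^{-1})) = H^1(G_K, W_{d-1}(\widetilde{\delta}_d^{-1}))$ diverges from the paper and is more delicate than you acknowledge. You propose to d\'evissage this equality down to the rank-one subquotients $W(\delta_j/\delta_d)$ and then compute each one. The trouble is that, unlike ordinary continuous cohomology, $H^1_f$ does not sit in a clean long exact sequence: for a short exact sequence $0 \to A \to B \to C \to 0$, the sequence $H^1_f(A) \to H^1_f(B) \to H^1_f(C)$ can fail to be exact at the middle, and the right map need not be surjective, so knowing $H^1_f = H^1$ for each rank-one piece together with $H^2=0$ does not formally propagate to the rank-$(d-1)$ $B$-pair by five-lemma reasoning. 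The paper avoids this entirely by working directly with the rank-$(d-1)$ $B$-pair $W_{d-1}(\widetilde{\delta}_d^{-1})$: it d\'evissages only to get $H^2(G_K, W_{d-1}(\widetilde{\delta}_d^{-1})) = 0$ from condition (2) (where the long exact sequence for ordinary $H^*$ is available), then uses the Euler--Poincar\'e formula for $\dim H^1$ and the Bloch--Kato dimension formula $\dim H^1_f = \dim \bold{D}_{\mathrm{dR}}/\mathrm{Fil}^0 + \dim H^0$ (valid since $W_{d-1}(\widetilde{\delta}_d^{-1})$ is crystalline by induction), together with $\mathrm{Fil}^0 = 0$ from condition (1), to see that the two dimensions agree and hence the inclusion $H^1_f \subseteq H^1$ is an equality. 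This dimension count at the full rank $(d-1)$ level is the key step your plan is missing. One smaller imprecision: conditions (1) and (2) are not Tate-dual to one another --- condition (2) controls $H^2$ (Tate-dual to an $H^0$ of a twist), while condition (1) controls $\mathrm{Fil}^0$ and the positivity of weights; they play distinct, not dual, roles.
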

 \begin{proof}
 We prove this lemma by induction on the rank of $W$. If $W$ is of rank one, the condition 
 (1) implies that $W=W(\delta_1)$ is crystalline.
 We assume that $W$ is of rank $d$ and $W_{d-1}$ is crystalline. For any $B$-pair $W'$,
 let $$\mathrm{H}^1_f(G_K,W'):=\mathrm{Ker}(\mathrm{H}^1(G_K, W')\rightarrow \mathrm{H}^1(G_K, W'_e\otimes_{\bold{B}_e}\bold{B}_{\mathrm{cris}}))$$ be the Bloch-Kato's finite cohomology of $W'$ 
 defined in Definition 2.4 of \cite{Na09}.
 We claim that the natural injection 
 $$\mathrm{H}^1_f(G_K, W_{d-1}(\delta_d^{-1}))\hookrightarrow \mathrm{H}^1(G_K, W_{d-1}(\delta_d^{-1}))$$ is 
 a bijection, which proves that $W$ is crystalline. 
 We prove this claim by computing the dimensions of both $E$-vector spaces.
 First, we have $\mathrm{H}^2(G_K, W(\delta_i/\delta_d))=0$  for any $1\leqq i\leqq d-1$ by 
 the condition (2) and Proposition 2.9
 of \cite{Na10}. Because $W_{d-1}(\delta_d^{-1})$ is a successive extension of $W(\delta_i/\delta_d)$, we also 
 have $\mathrm{H}^2(G_K, W_{d-1}(\delta_d^{-1}))=0$. Hence, we obtain an equality 
  $$\mathrm{dim}_E\mathrm{H}^1(G_K, W_{d-1}(\delta_d^{-1}))
 =[K:\mathbb{Q}_p](d-1)+\mathrm{dim}_E\mathrm{H}^0(G_K, W_{d-1}(\delta_d^{-1}))$$ by Euler-Poincar\'e characteristic formula (Theorem 2.8 of \cite{Na10}).
 On the other hands, because $W_{d-1}(\delta_d^{-1})$ is crystalline, we have an equality
\begin{multline*}
\mathrm{dim}_E\mathrm{H}^1_f(G_K, W_{d-1}(\delta_d^{-1}))
 =\mathrm{dim}_E\bold{D}_{\mathrm{dR}}(W_{d-1}(\delta_d^{-1}))/\mathrm{Fil}^0\bold{D}_{\mathrm{dR}}(W_{d-1}(\delta_d^{-1}))\\
 +\mathrm{dim}_E\mathrm{H}^0(G_K,W_{d-1}(\delta_d^{-1}))
  \end{multline*}
by Proposition 2.7 of \cite{Na09}. Because $W_{d-1}(\delta_{d}^{-1})$ is a successive extension of 
 $W(\delta_i/\delta_d)$, the condition (1) implies that $\mathrm{Fil}^0\bold{D}_{\mathrm{dR}}(W_{d-1}(\delta_d^{-1}))=0$. Therefore, we obtain the following equalities
 \begin{multline*}
\mathrm{dim}_E\mathrm{H}^1_f(G_K, W_{d-1}(\delta_d^{-1}))=\mathrm{dim}_E\bold{D}_{\mathrm{dR}}(W_{d-1}(\delta_d^{-1}))
 +\mathrm{dim}_E\mathrm{H}^0(G_K,W_{d-1}(\delta_d^{-1}))\\
 =[K:\mathbb{Q}_p](d-1)+\mathrm{dim}_E\mathrm{H}^0(G_K,W_{d-1}(\delta_d^{-1}))
 =\mathrm{dim}_E\mathrm{H}^1(G_K, W_{d-1}(\delta_d^{-1})).
  \end{multline*}
We finish to prove the claim, hence we finish to prove the lemma.

 \end{proof}
 
 We define two subsets $\mathcal{X}_{\overline{V},\mathrm{reg-\mathrm{cris}}}$ and 
 $\mathcal{X}_{\overline{V},b}$ of $\mathcal{X}_{\overline{V}}$ by 

\begin{multline*}
\mathcal{X}_{\overline{V},\mathrm{reg-\mathrm{cris}}}:=\{x=[V_x]\in \mathcal{X}_{\overline{V}}|V_x \text{ is crystalline with Hodge-Tate weights }\\
\{k_{i,\sigma}\}_{1\leqq i\leqq d,\sigma\in \mathcal{P}} 
 \text{ such that  }  k_{i,\sigma}\not=k_{j,\sigma} 
 \text{ for any } i\not=j \text{ and }\sigma\in \mathcal{P}\}
   \end{multline*}
\begin{multline*}
\mathcal{X}_{\overline{V},b}:=\{x=[V_x]\in \mathcal{X}_{\overline{V}}| V_x\otimes_{E(x)}E' \text{ is benign for a finite extension }
E' \text{of } E(x) \\
 \text{ and } T_{\tau} \text{ satisfies the condition (1) of Theorem \ref{1.7} for any } \tau\in \mathfrak{S}_d\}
\end{multline*}

Using the propositions proved in this subsection, we can prove the following theorem, which states the Zariski density of 
 crystalline points in $\mathcal{E}_{\overline{V}}$.

  \begin{thm}\label{1.13}
  Let $z_{(V_x,T_x)}\in \mathcal{E}_{\overline{V}}$ be an $E$-rational  point satisfying all the conditions in Theorem \ref{1.7}.
  Then, for any admissible open neighborhood $U\subseteq \mathcal{E}_{\overline{V}}$ of $z_{(V_x,T_x)}$, there exists 
  a smaller admissible open neighborhood $U'\subseteq U$ of $z_{(V_x,T_x)}$ such that the subset defined by 
  $$U'_{\mathrm{cris}}:=\{z=([V],\delta_1,\cdots,\delta_{d-1})\in U'| [V]\in \mathcal{X}_{\overline{V},\mathrm{reg}-\mathrm{cris}} \}$$ 
  is Zariski dense in $U'$.

 \end{thm}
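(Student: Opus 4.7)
The plan is to combine three inputs: smoothness of the weight map $f : \mathcal{E}_{\overline{V}} \to \mathcal{W}^{\times d}$ at $z := z_{(V_x, T_x)}$ (Proposition~\ref{1.9}), the local classicality statement (Proposition~\ref{1.10}), and Zariski density of the algebraic weight subset $\mathcal{W}^{\times d}_{k}$ in $\mathcal{W}^{\times d}$. The strategy is to shrink $U$ to an admissible open $U'$ on which $f$ is smooth and Proposition~\ref{1.10} applies, pull back Zariski density along $f$, and verify that the resulting dense subset lies in $U'_{\mathrm{cris}}$.

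First, by Proposition~\ref{1.9} and the openness of smoothness, I would shrink $U$ to an affinoid open neighborhood $U' = \mathrm{Spm}(R) \subseteq U$ of $z$ on which $f|_{U'}$ is smooth; shrinking further, I may assume $U'$ is $Y_i$-small for all $1 \leqq i \leqq d$ and fix a $k \in \mathbb{Z}_{\geqq 1}$ satisfying conditions (1) and (2) preceding Proposition~\ref{1.10}. Being smooth, $f|_{U'}$ is flat and open, so $f(U') \subseteq \mathcal{W}^{\times d}$ is an admissible open containing $f(z)$. Next I would verify Zariski density of $\mathcal{W}^{\times d}_k$ in $\mathcal{W}^{\times d}$: by Lemma~2.7 of \cite{Ch09}, the subset $\mathcal{W}_0 = \{\prod_{\sigma \in \mathcal{P}} \sigma^{k_\sigma} : k_\sigma \in \mathbb{Z}\} \subseteq \mathcal{W}$ is Zariski dense, so $\mathcal{W}_0^{\times d}$ is Zariski dense in $\mathcal{W}^{\times d}$. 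The subset $\mathcal{W}^{\times d}_k$ is obtained from $\mathcal{W}_0^{\times d}$ by imposing $k_{i,\sigma} - k_{i+1,\sigma} > k$, which after the lattice change of variables $(k_{1,\sigma}, \ldots, k_{d,\sigma}) \leftrightarrow (k_{d,\sigma}, k_{i,\sigma} - k_{i+1,\sigma})$ corresponds to $\mathbb{Z}^{|\mathcal{P}|} \times \mathbb{Z}_{>k}^{(d-1)|\mathcal{P}|}$, a lattice still accumulating analytically at the relevant limit points. The identity principle for rigid analytic functions on open discs then yields Zariski density of $\mathcal{W}^{\times d}_k$ in $\mathcal{W}^{\times d}$.

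Hence $\mathcal{W}^{\times d}_k \cap f(U')$ is Zariski dense in $f(U')$, and since $f|_{U'}$ is flat and open (locally factoring as an \'etale morphism into a polydisc bundle over $f(U')$), its preimage $f^{-1}(\mathcal{W}^{\times d}_k) \cap U'$ is Zariski dense in $U'$. For any point $z' = ([V'], \delta'_1, \ldots, \delta'_{d-1})$ in this preimage, Proposition~\ref{1.10} shows that $V'$ is crystalline and split trianguline with parameter $\{\delta'_i\}_{i=1}^d$; by the defining condition of $\mathcal{Z}_0$, the $\sigma$-part of the Hodge--Tate weights of $V'$ equals $\{k(\delta'_i)_\sigma\}_{i=1}^d$, strictly decreasing in $i$ by the inequality defining $\mathcal{W}^{\times d}_k$, hence pairwise distinct for each $\sigma \in \mathcal{P}$. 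Therefore $[V'] \in \mathcal{X}_{\overline{V}, \mathrm{reg}-\mathrm{cris}}$, so $U'_{\mathrm{cris}} \supseteq f^{-1}(\mathcal{W}^{\times d}_k) \cap U'$ is Zariski dense in $U'$, as required. The main technical obstacle will be establishing Zariski density of $\mathcal{W}^{\times d}_k$ in $\mathcal{W}^{\times d}$ (ensuring the cone condition does not destroy density within $\mathcal{W}_0^{\times d}$) together with its stability under pullback along the smooth morphism $f|_{U'}$; both rely on standard rigid analytic facts (the identity principle on open discs and the local \'etale-polydisc structure of smooth morphisms) that must be invoked with care.
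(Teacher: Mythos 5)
Your overall strategy—invoking Proposition~\ref{1.9} for smoothness of the weight map $f$ at $z$, Proposition~\ref{1.10} for the local classicality statement, and Zariski density of $\mathcal{W}^{\times d}_k$—matches the inputs the paper itself names, and the step deducing that $\mathcal{W}^{\times d}_k$ is Zariski dense in $\mathcal{W}^{\times d}$ (via Lemma~2.7 of~\cite{Ch09} and the slice argument for the cone) is fine. However, there is a genuine gap in the inference ``Hence $\mathcal{W}^{\times d}_k \cap f(U')$ is Zariski dense in $f(U')$.'' Global Zariski density in $\mathcal{W}^{\times d}$ does not restrict to an arbitrary admissible open: the set $\mathcal{W}^{\times d}_k$ consists of discrete classical points whose analytic closure is a proper sub-region of $\mathcal{W}^{\times d}$, so a small admissible open $f(U')$ around a general point could miss $\mathcal{W}^{\times d}_k$ entirely (for instance, a globally Zariski dense sequence in an open disc can avoid an annulus near the boundary). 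What the argument actually needs is the stronger \emph{accumulation} property at $f(z)$: since $V_x$ is potentially crystalline (under condition~(1) of Theorem~\ref{1.7}) or otherwise has integral Hodge--Tate weights, the point $f(z)$ lies in the subset of $\mathcal{W}^{\times d}$ at which algebraic weights accumulate, and one shows by an explicit $p$-power congruence approximation plus the identity theorem that for any small affinoid open $V \ni f(z)$, the set $\mathcal{W}^{\times d}_k \cap V$ is Zariski dense in $V$. You must then shrink $U'$ so that $f(U') \subseteq V$ before pulling back.

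A second, related point is that the pullback step ``flat and open implies preimage of Zariski dense is Zariski dense'' also does not hold in that generality (open immersions are flat and open, yet restriction can kill Zariski density, as in the annulus example). The correct justification is the one you gesture at but should make explicit: after shrinking, $f|_{U'}$ factors, \'etale-locally, as $U' \to \mathbb{B}^n \times V \to V$ with the last arrow the polydisc projection. Then the preimage of a Zariski dense subset of $V$ under the projection is Zariski dense (by the slice-by-slice identity theorem), and a finite \'etale dominant map pulls back Zariski dense sets to Zariski dense sets; combining these gives the statement. But this only works once the accumulation argument supplies Zariski density of $\mathcal{W}^{\times d}_k \cap V$ in a sufficiently small affinoid $V \ni f(z)$, not merely global density in $\mathcal{W}^{\times d}$.
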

 \begin{proof}
 If we use Proposition \ref{1.9} and Proposition \ref{1.10}, the proof of this theorem is same as that of Lemma 4.7 of 
 \cite{Na10}.
 \end{proof}
 

\section{Zariski density of crystalline representations for any $p$-adic field}
 In this final chapter, we prove the main theorems of this article.

 \begin{lemma}\label{1.14}
 Let $x=[V_x]\in \mathcal{X}_{\overline{V},\mathrm{reg-\mathrm{cris}}}$ be a point. 
 Then, for any admissible open neighborhood $U\subseteq \mathcal{X}_{\overline{V}}$ of $x$, 
 $U\cap \mathcal{X}_{\overline{V},b}$ is not empty.
 
 \end{lemma}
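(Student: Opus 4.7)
My plan is to find a benign crystalline representation near $V_x$ by generically perturbing the Frobenius and Hodge filtration of $V_x$ inside its crystalline deformation space. Since $V_x$ is crystalline with distinct Hodge-Tate weights $\{k_{i,\sigma}\}_{i,\sigma}$, Kisin's theory on crystalline deformations provides a formally smooth quotient $R_{V_x}^{\mathrm{cris}}$ of $R_{V_x} \cong \hat{\mathcal{O}}_{\mathcal{X}_{\overline{V}},x}\otimes_E E(x)$ which pro-represents the subfunctor of crystalline deformations with those fixed Hodge-Tate weights. Let $\mathcal{Y}$ denote the associated smooth closed analytic subspace of a neighborhood of $x$ in $\mathcal{X}_{\overline{V}}\otimes_E E(x)$. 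Distinctness of the Hodge-Tate weights guarantees that $\mathcal{Y}$ has positive dimension, and every point of $\mathcal{Y}(\overline{E})$ corresponds to a crystalline representation with the same Hodge-Tate weights as $V_x$.

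Next I translate benignity into an explicit Zariski open condition on $\mathcal{Y}$. A point $[V_y] \in \mathcal{Y}(\overline{E})$ lies in $\mathcal{X}_{\overline{V},b}$ if and only if the filtered $\varphi$-module $\bold{D}_{\mathrm{cris}}(V_y)$ satisfies: (a) the $\varphi^f$-eigenvalues $\alpha_1,\ldots,\alpha_d$ are pairwise distinct and $\alpha_i/\alpha_j \neq p^{\pm f}$ for all $i \neq j$; (b) for every $1 \leq i \leq d-1$, the $\binom{d}{i}$ products $\prod_{j\in J}\alpha_j$ (as $J$ ranges over the $i$-element subsets of $[d]$) are pairwise distinct, which is exactly condition (1) of Theorem \ref{1.7} applied simultaneously to every triangulation $T_\tau$; and (c) for every $\tau \in \mathfrak{S}_d$ and every $1 \leq i \leq d$, the subspace $W_{\tau,i}$ has Hodge-Tate weights $\{k_{1,\sigma},\ldots,k_{i,\sigma}\}_{\sigma \in \mathcal{P}}$, which is condition (3) in the definition of benign. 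Each of (a), (b), (c) is cut out by the non-vanishing of an explicit polynomial in the entries of $\varphi$ and of the Hodge filtration, so the benign locus $\mathcal{Y}^b \subseteq \mathcal{Y}$ is Zariski open. A generic perturbation of $\bold{D}_{\mathrm{cris}}(V_x)$ within the formally smooth space $\mathcal{Y}$ satisfies (a)--(c) simultaneously, so $\mathcal{Y}^b$ is non-empty and hence Zariski dense in the smooth, irreducible-at-$x$ space $\mathcal{Y}$.

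Finally, the given admissible open $U \subseteq \mathcal{X}_{\overline{V}}$ contains $x$, so $U \cap \mathcal{Y}$ is a non-empty admissible open of $\mathcal{Y}$, and by Zariski density of $\mathcal{Y}^b$ in $\mathcal{Y}$ this intersection meets $\mathcal{Y}^b$. Any point $y$ in the intersection furnishes $y \in U \cap \mathcal{X}_{\overline{V},b}$, proving the lemma. The main obstacle I expect is the verification that $\mathcal{Y}^b$ is actually non-empty: one must check that the formally smooth tangent space of $\mathcal{Y}$ at $x$ contains directions that simultaneously push the Frobenius eigenvalues, the products of $i$-subsets of eigenvalues, and the relative position of the Hodge filtration into generic configurations. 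This is a transversality argument, which works because $\mathcal{Y}$ locally parametrizes the full moduli of weakly admissible filtered $\varphi$-modules lifting the residual representation with fixed Hodge-Tate weights, and the variation of $\varphi$ and of the Hodge filtration provides enough independent parameters to achieve the required open conditions.
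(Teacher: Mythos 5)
Your proposal follows essentially the same route as the paper's proof (Lemma \ref{1.15}): pass to the crystalline locus with fixed Hodge--Tate weights, observe that benignity together with condition (1) of Theorem \ref{1.7} is a Zariski open condition on the associated filtered $\varphi$-module, and conclude by showing this open locus is nonempty near $x$. The translations you give for conditions (a), (b), (c) are accurate, including the observation that condition (1) of Theorem \ref{1.7} for all $\tau\in\mathfrak{S}_d$ amounts to pairwise distinctness of the $i$-fold products of Frobenius eigenvalues.

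However, the crucial step is precisely the one you flag as "the main obstacle": you assert by a "transversality argument" that the benign locus is nonempty, without actually verifying it. This is the technical content of the lemma and cannot be left as a heuristic. The paper handles it by (i) passing to the \emph{framed} crystalline deformation space $\mathcal{X}^{\square,\tau}_{\overline V,\mathrm{cris}}$, where Kisin's results guarantee that every completed local ring $R^{\square,\mathrm{cris}}_{V_z}$ is a domain, so that a function is a non--zero--divisor as soon as it is nonzero in each completion; (ii) after showing the discriminant of the Frobenius characteristic polynomial is a non--zero--divisor, shrinking to an affinoid where $\varphi^f$ can be simultaneously diagonalized (using Henselianness of the local ring of the rigid space, a point you do not address but which is needed to even write down the determinants encoding condition (c)); and (iii) \emph{explicitly constructing first-order deformations} of the filtered $\varphi$-module over $E(z)[\varepsilon]$ that make the discriminant, the differences $\alpha_i - p^{\pm f}\alpha_j$, the differences $\widetilde\alpha_{J_1}-\widetilde\alpha_{J_2}$, and the product $a$ of minor determinants all nonzero. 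Your phrase "the variation of $\varphi$ and of the Hodge filtration provides enough independent parameters" is exactly what must be proved, and the proof is by exhibiting such deformations; as written the gap remains. You also work with the unframed ring $R^{\mathrm{cris}}_{V_x}$: this is fine for the argument at the single point $x$, but if you want Zariski density of the benign locus in a whole affinoid neighborhood (which the paper uses elsewhere), the framed ring is the right tool since it is formally smooth at every point regardless of endomorphism considerations.
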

 \begin{proof}
 This lemma is a generalization of Lemma 4.12 of \cite{Na10}.
 We may assume that $E(x)=E$ and that 
 the set of Hodge-Tate weights $\tau:=
 \{k_{i,\sigma}\}_{1\leqq i\leqq d, \sigma\in \mathcal{P}}$  
 of $V_x$ satisfies $k_{1,\sigma}>k_{2,\sigma}>\cdots>k_{d,\sigma}$ for any $\sigma\in \mathcal{P}$. 
 By Corollary 2.7.7 of \cite{Ki08}, the subset 
 $\mathcal{X}_{\overline{V},\mathrm{cris}}^{\tau}$ of $\mathcal{X}_{\overline{V}}$ consisting of 
 the points corresponding to crystalline representations with Hodge-Tate weights
 $\{k_{i,\sigma}\}_{1\leqq i\leqq d,\sigma\in \mathcal{P}}$ forms a Zariski closed subspace of $\mathcal{X}_{\overline{V}}$ 
 corresponding to a quotient $R_{\overline{V},\mathrm{cris}}^{\tau}$ of $R_{\overline{V}}$. We consider 
 the universal framed deformation ring $R^{\square}_{\overline{V}}$ of ($\overline{V},\beta$), where $\beta$ is a fixed 
 $\mathbb{F}$-base of $\overline{V}$. Then, in the same way as $R^{\tau}_{\overline{V},\mathrm{cris}}$, we obtain 
 a quotient $R^{\square,\tau}_{\overline{V},\mathrm{cris}}$ of $R^{\square}_{\overline{V}}$ and we have a natural
 map $R^{\tau}_{\overline{V},\mathrm{cris}}\rightarrow R^{\square,\tau}_{\overline{V},\mathrm{cris}}$ which is induced from the map $R_{\overline{V}}\rightarrow R^{\square}_{\overline{V}}$ corresponding 
 to the forgetting map $D^{\square}_{\overline{V}}(A)\rightarrow D_{\overline{V}}(A):(V_A,\psi_A,\widetilde{\beta})\mapsto 
 (V_A,\psi_A)$ ($A\in \mathcal{C}_{\mathcal{O}}$), where $\widetilde{\beta}$ is an $A$-base of $V_A$ which is a
  lift of $\beta$. Therefore, if we denote by $\mathcal{X}^{\square,\tau}_{\overline{V},\mathrm{cris}}$ the rigid analytic space 
  associated to $R^{\square,\tau}_{\overline{V},\mathrm{cris}}$, it suffices to show the following lemma.
  
 \end{proof}
 \begin{lemma}\label{1.15}
 Let $x$ be a point of $\mathcal{X}^{\square,\tau}_{\overline{V},\mathrm{cris}}$ and 
 let $U$ be an admissible open neighborhood of $x$, then there exists a point $z$ of $U$
 whose corresponding representation is benign and satisfies the condition $(1)$ of Theorem \ref{1.7}.

 \end{lemma}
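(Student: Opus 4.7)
The strategy is to combine Kisin's formal smoothness of the framed crystalline deformation ring with Fontaine's equivalence, so as to reduce the existence of nearby benign points to a genericity statement on the moduli of framed filtered $\varphi$-modules with Hodge-Tate weights $\tau$. By Kisin's theorem (\cite{Ki08}), the ring $R^{\square,\tau}_{\overline{V},\mathrm{cris}}[1/p]$ is formally smooth of dimension $d^2 + [K:\mathbb{Q}_p]\frac{d(d-1)}{2}$ at every point of $\mathcal{X}^{\square,\tau}_{\overline{V},\mathrm{cris}}$. Consequently $\mathcal{X}^{\square,\tau}_{\overline{V},\mathrm{cris}}$ is smooth at $x$, the connected component through $x$ is irreducible, and any non-empty Zariski open subset of this component is Zariski dense in it and meets any admissible open neighborhood $U$ of $x$. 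It therefore suffices to show that the locus where \emph{the representation is benign and satisfies condition $(1)$ of Theorem \ref{1.7}} is a non-empty Zariski open subset of this component.

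Next, via the equivalence $\bold{D}_{\mathrm{cris}}$ between crystalline $E$-representations with Hodge-Tate weights $\tau$ and $E$-filtered $\varphi$-modules with the prescribed Hodge numbers, the formal completion $\widehat{\mathcal{O}}_{\mathcal{X}^{\square,\tau}_{\overline{V},\mathrm{cris}}, x}$ is identified with the deformation ring of the framed filtered $\varphi$-module $(\bold{D}_{\mathrm{cris}}(V_x), \beta)$. A dimension count on the smooth moduli of framed filtered $\varphi$-modules with fixed Hodge-Tate weights $\tau$ matches Kisin's formula, so near $x$ the space is a classical parameter space of pairs (Frobenius operator on a free $K_0\otimes_{\mathbb{Q}_p} E$-module, Hodge filtration). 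One then unwinds ``benign together with condition $(1)$ of Theorem \ref{1.7} for every $T_\tau$'' into the following explicit open conditions: the $\varphi^f$-eigenvalues $\alpha_1,\dots,\alpha_d$ are pairwise distinct with $\alpha_i/\alpha_j\not=p^{\pm f}$; all products $\prod_{i\in I}\alpha_i$ over subsets $\emptyset\not=I\subsetneq [d]$ are pairwise distinct (which is precisely what forces the generalized $\varphi^f$-eigenspace of eigenvalue $\prod_{j=1}^i\alpha_{\tau(j)}$ on $\wedge^i\bold{D}_{\mathrm{cris}}(V)$ to be free of rank one for every $\tau\in\mathfrak{S}_d$); and the Hodge filtration lies in general position with respect to each of the $d!$ flags spanned by the $\varphi^f$-eigenvectors. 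Each such condition is Zariski open, and the simultaneous locus is non-empty by an elementary dimension argument on $GL_d$ and on the product of full flag varieties indexed by $\mathcal{P}$.

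The main technical obstacle is establishing the identification in the second step: namely, that the formal completion of $\mathcal{X}^{\square,\tau}_{\overline{V},\mathrm{cris}}$ at $x$ really agrees with the deformation space of the framed filtered $\varphi$-module attached to $V_x$. This is precisely what Kisin's formal smoothness of $R^{\square,\tau}_{\overline{V},\mathrm{cris}}[1/p]$, of the expected dimension, together with Fontaine's equivalence, provides. Once granted, the conclusion follows: the benign-plus-condition-$(1)$ locus is a non-empty Zariski open subset of the smooth irreducible component through $x$, hence is Zariski dense in that component and thus meets $U$, producing the desired point $z$.
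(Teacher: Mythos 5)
Your proposal follows the same high-level strategy as the paper: use Kisin's formal smoothness of $R^{\square,\tau}_{\overline{V},\mathrm{cris}}[1/p]$ (which gives that the local rings are domains), translate ``benign together with condition (1) of Theorem \ref{1.7}'' into a finite collection of Zariski-open genericity conditions on the Frobenius eigenvalues and the Hodge filtration, and argue by scheme-theoretic density. Your translation of the conditions is correct: the ``all products $\prod_{i\in I}\alpha_i$ pairwise distinct'' part does encode condition (1), and the benign property is the general-position statement on the filtration with respect to the $\varphi$-eigenflags, plus the $\alpha_i\neq\alpha_j,\,p^{\pm f}\alpha_j$ condition.

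Where you leave a real gap is in the final non-emptiness step. Your phrase ``the simultaneous locus is non-empty by an elementary dimension argument on $GL_d$ and on the product of full flag varieties'' is not quite the right claim: a dimension count on the global moduli of framed filtered $\varphi$-modules does not by itself show that the benign locus meets the formal neighborhood of the specific, possibly degenerate, point $x$. What one actually needs is that each of the cutting functions (the discriminant of the Frobenius characteristic polynomial, the differences $\widetilde{\alpha}_{J_1}-\widetilde{\alpha}_{J_2}$, and the product $a$ of minor determinants of the transition matrix between the $\varphi$-eigenbasis and a filtration-adapted basis) is not identically zero in the power-series ring $\widehat{\mathcal{O}}_{\mathcal{X}^{\square,\tau}_{\overline{V},\mathrm{cris}},z}\cong R^{\square,\mathrm{cris}}_{V_z}$. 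The paper establishes this concretely by exhibiting, for each such function $g$, an explicit first-order deformation of $\bold{D}_{\mathrm{cris}}(V_z)$ over $E(z)[\varepsilon]$ on which $g$ is non-zero; the domain property then promotes this to scheme-theoretic density. Also unaddressed in your sketch are the intermediate geometric steps: shrinking to an affinoid where $\bold{D}_{\mathrm{cris}}(V_R)$ is finite free of rank $d$ and commutes with base change (Berger--Colmez, Chenevier), and then shrinking further, via Henselian-ness of $\mathcal{O}_{\mathcal{X}^{\square,\tau}_{\overline{V},\mathrm{cris}},z}$, so that the $\varphi$-eigenbasis and a filtration-adapted basis exist over the affinoid, without which the transition matrices and the function $a$ cannot even be defined. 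These are not a different route but the missing technical backbone of your argument.
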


 \begin{proof}
 We remark that, by the proof of 
Theorem 3.3.8 of \cite{Ki08}, 
we have a natural isomorphism $\hat{\mathcal{O}}_{\mathcal{X}_{\overline{V}, \mathrm{cris}}^{\square,\tau},y}\isom 
R_{V_y}^{\square,\mathrm{cris}}$ for each $y\in \mathcal{X}_{\overline{V}, \mathrm{cris}}^{\square,\tau}$. By Corollary 6.3.3 of \cite{Be-Co08} and by Corollary 3.19 of \cite{Ch09}, 
there exists an admissible affinoid open neighborhood $U=\mathrm{Spm}(R)$ of $x$ in $\mathcal{X}_{\overline{V},\mathrm{cris}}^{\square,\tau}$, 
such that $$\bold{D}_{\mathrm{cris}}(V_R):= ((R\hat{\otimes}_{\mathbb{Q}_p}\bold{B}_{\mathrm{cris}})\otimes_{R}V_R)^{G_K}$$
is a finite free $K_0\otimes_{\mathbb{Q}_p}R$-module of rank $d$ and $\bold{D}_{\mathrm{dR}}(V_R)\isom K\otimes_{K_0}\bold{D}_{\mathrm{cris}}(V_R)$ which are compatible with base changes, where $V_R$ is the restriction to $U$ of the universal deformation of $\overline{V}$.  
For each $\sigma'\in \mathrm{Gal}(K_0/\mathbb{Q}_p)$, we denote  by  $D_{\sigma'}$ the $\sigma'$-component of $\bold{D}_{\mathrm{cris}}(V_R)$. We denote by 
$$T^d+a_{d-1}T^{d-1}+\cdots+a_1T+a_0:=\mathrm{det}_R(T\cdot\mathrm{id}_{D_{\sigma'}}-\varphi^f|_{D_{\sigma'}})
\in R[T]$$ the characteristic polynomial of the relative Frobenius on $D_{\sigma'}$ , which is independent of $\sigma'\in \mathrm{Gal}(K_0/\mathbb{Q}_p)$. 
Denote by $\Delta\in R$ the discriminant of this polynomial.
Then, we claim that $\Delta$ is a non zero divisor of $R$, i.e. the subset $U_{\Delta}\subseteq U$ consisting of the points 
$z$ such that $\bold{D}_{\mathrm{cris}}(V_z)$ have $d$-distinct relative Frobenius eigenvalues is scheme theoretically dense 
in $U$. To prove this claim, it suffices to show that $\Delta\not= 0$ in $\hat{\mathcal{O}}_{\mathcal{X}(\bar{\rho})_{\mathrm{cris}}^{\square, \tau},z}\isom R_{V_z}^{\square, \mathrm{cris}}$ for any $z\in U$ because 
$R_{V_z}^{\square, \mathrm{cris}}$ is domain by Theorem 3.3.8 of \cite{Ki08}. 
It is easy to see that $\bold{D}_{\mathrm{cris}}(V_z)$ can be deformed over $E(z)[\varepsilon]$ 
with $d$-distinct relative Frobenius 
eigenvalues, hence $\Delta \not =0 $ in $R_{V_z}^{\square, \mathrm{cris}}$. In the same way, we can show that the subset $U''\subseteq U$ consisting of the points $z$ such that $\bold{D}_{\mathrm{cris}}(V_z)$ have relative Frobenius eigenvalues $\{\alpha_i\}_{1\leqq i\leqq d}$ 
satisfying $\alpha_i\not= p^{\pm f}\alpha_j$ for any $i\not=j$  is also scheme theoretically dense in $U$. Hence, their intersection 
$U_{\Delta}\cap U''$ is also scheme theoretically dense in $U$. 
Take an element $z\in U_{\Delta}\cap U''\subseteq U$. Extending scalars, 
we may assume that $$\bold{D}_{\mathrm{cris}}(V_z)=\oplus_{i=1}^dK_0\otimes_{\mathbb{Q}_p}Ee_{i,z}$$ such that 
$\varphi^f(e_{i,z})=\alpha_{i,z}e_{i,z}$ for some $\alpha_{i,z}\in E^{\times}$ ($1\leqq i\leqq d$) such that $\alpha_{i,z}\not=\alpha_{j,z},p^{\pm f}\alpha_{j,z}$ for any $i\not= j$. Because 
$\mathcal{O}_{\mathcal{X}(\bar{\rho})^{\square, \tau}_{\mathrm{cris}},z}$ is Henselian by Theorem 2.1.5 of \cite{Berk93}, if we 
take a sufficiently small affinoid open 
neighborhood $U'=\mathrm{Spm}(R')$ of $z$ in $U_{\Delta}\cap U''$, then we can write
$$\bold{D}_{\mathrm{cris}}(V_R)\otimes_RR'= \oplus_{i=1}^dK_0\otimes_{\mathbb{Q}_p}R'e_i$$
such that $K_0\otimes_{\mathbb{Q}_p}R'e_i$ is $\varphi$-stable and $\varphi^f(e_i)=\widetilde{\alpha}_ie_i$ for some $\widetilde{\alpha}_i\in R^{' \times}$ for $1\leqq i\leqq d$ 
satisfying that $\widetilde{\alpha}_i-\widetilde{\alpha}_j, \widetilde{\alpha}_i-p^{\pm f}\widetilde{\alpha}_j\in R^{' \times}$ for any $i\not= j$. By Lemma 2.6.1 and by the proof of Corollary 2.6.2 of \cite{Ki08}, 
for sufficiently small $U'$,  if we decompose $\bold{D}_{\mathrm{dR}}(V_R)\otimes_{R}R'$ into $\sigma$-components by$$\bold{D}_{\mathrm{dR}}(V_R)\otimes_{R}R'\isom \bold{D}_{\mathrm{dR}}(V_{R'})=\oplus_{\sigma\in \mathcal{P}} D_{\sigma},$$ then, for each $\sigma\in \mathcal{P}$, the
$\sigma$-component $D_{\sigma}$ of $\bold{D}_{\mathrm{dR}}(V_{R'})$ is equipped with 
a filtration $\{\mathrm{Fil}^iD_{\sigma}\}_{i\in\mathbb{Z}}$ by finite free $R'$-modules such that 
$$\mathrm{Fil}^iD_{\sigma}\otimes_{R'}B\isom \mathrm{Fil}^i\bold{D}_{\mathrm{dR}}(V_{R'}\otimes_{R'} B)_{\sigma}$$ for any 
local $R'$-algebra $B$ which is finite over $E$. From these facts, we can
obtain two $R'$-bases $\{e_{i,\sigma}\}_{i=1}^d$ and $\{f_{i,\sigma}\}_{i=1}^d$ of $D_{\sigma}$, where 
$\{e_{i,\sigma}\}_{i=1}^d$ is the basis naturally induced from the basis $\{e_{i}\}_{i=1}^d$ of $\bold{D}_{\mathrm{cris}}(V_{R'})$,
and $\{f_{i,\sigma}\}_{i=1}^d$ is a basis which satisfies that 
$$\mathrm{Fil}^{-k_{i,\sigma}}D_{\sigma}=R'f_{i,\sigma}\oplus R'f_{i+1,\sigma}\oplus 
\cdots\oplus R'f_{d,\sigma}$$ for any $1\leqq i\leqq d$. For each $\sigma\in \mathcal{P}$, we define a $(d\times d)$-matrix $A_{\sigma}:=(a_{i,j,\sigma})_{i,j}$ by 
$f_{j,\sigma}:=\sum_{i=1}^d a_{i,j,\sigma}e_{i,\sigma}$. We denote by $a\in R$ the product of all $k$-th minor determinants of $A_{\sigma}$ for 
all $1\leqq k\leqq d-1$ and $\sigma\in \mathcal{P}$. By the definition of benign representation, for any $z\in \mathrm{Spm}(R')$, it is 
easy to see that $V_z$ is benign if and only if $a(z)\not=0$ in $E(z)$.
Therefore, to prove the lemma, it suffices to show that 
$\mathrm{Spm}(R')_a$  and $\mathrm{Spm}(R')_{(\widetilde{\alpha}_{J_1}-\widetilde{\alpha}_{J_2})}$ for any $1\leqq i\leqq d-1$ and for any subsets $J_1\not=J_2$ of $[d]$ such that $\sharp(J_1)=\sharp(J_2)=i$ are all scheme theoretically dense in $\mathrm{Spm}(R')$, i.e. it suffices to show that 
both $a$ and $\widetilde{\alpha}_{J_1}-\widetilde{\alpha}_{J_2}$ are non zero divisors of $R'$. Because we have an isomorphism $\hat{R}'_{\mathfrak{m}_z}\isom R_{V_z}^{\square, \mathrm{cris}}$ and $R_{V_z}^{\square,\mathrm{cris}}$ is domain for any 
$z\in \mathrm{Spm}(R')$, it suffices to show that both $a$ and $\widetilde{\alpha}_{J_1}-\widetilde{\alpha}_{J_2}$ are non zero in $R_{V_z}^{\square, \mathrm{cris}}$. Finally, this claim can be easily proved by explicitly constructing lifts of the filtered $\varphi$-module $\bold{D}_{\mathrm{cris}}(V_z)$ over $E(z)[\varepsilon]$ such that the values corresponding to $a$ or $\widetilde{\alpha}_{J_1}-\widetilde{\alpha}_{J_2}$ for the lifts are non zero.

 \end{proof}
 
 For a rigid analytic space $Y$ over $E$ and for a point $y\in Y$, we denote the tangent space at $y$ 
 by 
$$t_{Y,y}:=\mathrm{Hom}_{E(y)}(\mathfrak{m}_y/\mathfrak{m}_y^2, E(y)),$$
 where 
$\mathfrak{m}_y$ is the maximal ideal of $\mathcal{O}_{Y,y}$. 

Denote by $\overline{\mathcal{X}}_{\overline{V},\mathrm{reg}-\mathrm{cris}}$ the Zariski closure of $\mathcal{X}_{\overline{V},\mathrm{reg}-\mathrm{cris}}$ in $\mathcal{X}_{\overline{V}}$. 
 The following theorems are the main theorems of this paper.
 
 \begin{thm}\label{1.16}
$\overline{\mathcal{X}}_{\overline{V},\mathrm{reg}-\mathrm{cris}}$ is a union of irreducible components of 
$\mathcal{X}_{\overline{V}}^{\mathrm{red}}$, and each irreducible component of $\mathcal{X}_{\overline{V}}^{\mathrm{red}}$ which is contained in $\overline{\mathcal{X}}_{\overline{V},\mathrm{reg}-\mathrm{cris}}$ is equidimensional of dimension $d^2[K:\mathbb{Q}_p]+1$.
 
 \end{thm}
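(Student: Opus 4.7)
The plan is to work locally at well-chosen benign points of $Y := \overline{\mathcal{X}}_{\overline{V},\mathrm{reg}-\mathrm{cris}}$, combining the local eigenvariety $\mathcal{E}_{\overline{V}}$ from Corollary \ref{1.3}, the local crystalline density (Theorem \ref{1.13}), and Chenevier's spanning theorem (Theorem \ref{4.3}).

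First I would use Lemma \ref{1.14} to observe that the subset $\mathcal{X}_{\overline{V},b}$ of benign points is Zariski dense in $\mathcal{X}_{\overline{V},\mathrm{reg}-\mathrm{cris}}$, and hence in $Y$, so every irreducible component $C$ of $Y$ contains a benign point. Using Lemma \ref{1.15}, I would pick such a point $x'=[V_{x'}] \in C$ for which $(V_{x'},T_\tau)$ satisfies condition (1) of Theorem \ref{1.7} for every refinement $\tau \in \mathfrak{S}_d$. For each such $\tau$, Theorem \ref{1.7} yields $z_\tau := z_{(V_{x'},T_\tau)} \in \mathcal{E}_{\overline{V}}$ with $\hat{\mathcal{O}}_{\mathcal{E}_{\overline{V}},z_\tau} \cong R_{V_{x'},T_\tau}$ formally smooth of dimension $[K:\mathbb{Q}_p]d(d+1)/2+1$, and Theorem \ref{1.13} gives an admissible open $U'_\tau \ni z_\tau$ with regular crystalline points Zariski dense in it. The Zariski closure of $p(U'_\tau)$ in $\mathcal{X}_{\overline{V}}$ therefore lies in $Y$; passing to formal completions at $x'$, the canonical surjection $R_{V_{x'}} \twoheadrightarrow R_{V_{x'},T_\tau}$ will factor as $R_{V_{x'}} \twoheadrightarrow \hat{\mathcal{O}}_{Y,x'} \twoheadrightarrow R_{V_{x'},T_\tau}$ for every $\tau$.

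Next, Chenevier's Theorem \ref{4.3} gives $\sum_{\tau \in \mathfrak{S}_d} t_{R_{V_{x'},T_\tau}} = t_{R_{V_{x'}}}$, which combined with the factorisations will force the tangent-space equality $t_{\hat{\mathcal{O}}_{Y,x'}} = t_{R_{V_{x'}}} = t_{\mathcal{X}_{\overline{V}},x'}$. To convert this into an actual equality of formal completions, I would further arrange $V_{x'} \not\cong V_{x'}(1)$, which can be done while keeping $x' \in C$ since the condition is Zariski-open and non-empty on the benign locus; by Tate local duality this yields $H^2(G_K,\mathrm{ad}(V_{x'}))=0$, and an Euler--Poincar\'e count then shows $R_{V_{x'}}$ is formally smooth over $E$ of dimension $d^2[K:\mathbb{Q}_p]+1$. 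In this regular local ring, the tangent-space equality together with the existence of all the formally smooth quotients $R_{V_{x'},T_\tau}$ of controlled dimension will force $\hat{\mathcal{O}}_{Y,x'} = R_{V_{x'}}$, so $Y$ will contain the unique irreducible component of $\mathcal{X}_{\overline{V}}^{\mathrm{red}}$ through $x'$, which has dimension $d^2[K:\mathbb{Q}_p]+1$. Doing this for a benign point in each component of $Y$ will prove both assertions.

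The hard part will be this last step: an ideal contained in $\mathfrak{m}^2$ of a regular local ring can still be non-zero (even non-trivially prime), so tangent-space equality alone is insufficient to force ideal equality. The argument must leverage not just Chenevier's spanning theorem but the full information from every formally smooth trianguline quotient $R_{V_{x'},T_\tau}$ of its specific dimension $[K:\mathbb{Q}_p]d(d+1)/2+1$ factoring through $\hat{\mathcal{O}}_{Y,x'}$, together with the precise cohomological dimension of $R_{V_{x'}}$ at $x'$, to rule out any non-trivial radical ideal lying inside the ideal that cuts $Y$ out of $\mathcal{X}_{\overline{V}}$ near $x'$.
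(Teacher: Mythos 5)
Your proposal uses exactly the right ingredients -- Lemma \ref{1.14}, Theorem \ref{1.7}, Theorem \ref{1.13}, and Chenevier's Theorem \ref{4.3} -- and the tangent-space argument is set up correctly: the factorisations $R_{V_{x'}} \twoheadrightarrow \hat{\mathcal{O}}_{Y,x'} \rightarrow R_{V_{x'},T_\tau}$ combined with the spanning statement $\sum_\tau t_{R_{V_{x'},T_\tau}} = t_{R_{V_{x'}}}$ do force $t_{\hat{\mathcal{O}}_{Y,x'}} = t_{R_{V_{x'}}}$. But you have correctly identified, and then failed to close, a genuine gap: tangent-space equality at one benign point does not by itself force $\hat{\mathcal{O}}_{Y,x'} = R_{V_{x'}}$. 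Your suggested remedy -- to exploit ``the full information from every formally smooth trianguline quotient $R_{V_{x'},T_\tau}$ of its specific dimension'' -- does not resolve the issue; knowing that a regular local ring of dimension $d^2[K:\mathbb{Q}_p]+1$ admits a family of regular quotients of dimension $[K:\mathbb{Q}_p]d(d+1)/2+1$ whose tangent spaces span does not rule out a non-zero ideal inside $\mathfrak{m}^2$ of the ambient ring.

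The missing idea, and the paper's key move, is much simpler: choose the benign point $x'$ so that the \emph{component $Z$ itself is smooth at $x'$}, not merely that $R_{V_{x'}}$ is formally smooth. Since the singular locus $Z_{\mathrm{sing}}$ is a proper Zariski-closed subset of $Z$ and $\mathcal{X}_{\overline{V},\mathrm{reg}\text{-}\mathrm{cris}}$ is Zariski dense in $Z$, one first finds a regular-crystalline point of $Z$ lying in $Z\setminus Z_{\mathrm{sing}}$, and then applies Lemma \ref{1.14} inside an admissible open contained in the smooth locus to obtain a benign $x' \in \mathcal{X}_{\overline{V},b}\cap Z$ with $Z$ smooth at $x'$. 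Once both $Z$ and $\mathcal{X}_{\overline{V}}$ are smooth at $x'$, the closed immersion $\hat{\mathcal{O}}_{\mathcal{X}_{\overline{V}},x'}\twoheadrightarrow \hat{\mathcal{O}}_{Z,x'}$ is a surjection between regular local rings, and equality of tangent spaces forces equality of dimensions and hence an isomorphism. (Incidentally, you need not ``further arrange'' $V_{x'}\not\cong V_{x'}(1)$ -- the defining inequalities $\alpha_i\ne p^{\pm f}\alpha_j$ for a benign representation already preclude this, and smoothness of $\mathcal{X}_{\overline{V}}$ at a benign point is exactly Corollary 2.50 of \cite{Na10}.) Equidimensionality of the components with the stated dimension then follows since each is smooth at some such $x'$, where the local dimension is $d^2[K:\mathbb{Q}_p]+1$.
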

 
 \begin{proof}
Let $Z$ be an irreducible component of $\overline{\mathcal{X}}_{\overline{V},\mathrm{reg}-\mathrm{cris}}$. Because the singular locus $Z_{\mathrm{sing}}\subseteq Z$ is a proper Zariski closed subset of $Z$, Lemma \ref{1.14} implies that there exists a point 
$x\in \mathcal{X}_{\overline{V},\mathrm{b}}\cap Z$ such that $Z$ is smooth at $x$. Moreover, $\mathcal{X}_{\overline{V}}$ is also smooth at $x$ of its dimension $d^2[K:\mathbb{Q}_p]+1$ by 
Corollary 2.50 of \cite{Na10}. Because any irreducible component is equidimensional by a remark 
in page.14 of \cite{Con99}, then it suffices to show that the natural inclusion 
$t_{Z,x}\hookrightarrow t_{\mathcal{X}_{\overline{V}},x}$ is an isomorphism.

By the definition of benign representation and by Proposition \ref{1.4}, the point $z_{(V_x,T_{\tau})}\in\mathcal{Z}_0$ 
corresponding to the pair $(V_x,T_{\tau})$ is contained in $\mathcal{E}_{\overline{V}}$ for each $\tau\in \mathfrak{S}_d$.
For each $\tau\in\mathfrak{S}_d$, we denote by $Y_{\tau}$ the irreducible component of 
$p^{-1}(\overline{\mathcal{X}}_{\overline{V},\mathrm{reg}-\mathrm{cris}})$ containing $z_{(V_x,T_{\tau})}$, which is uniquely determined and also an irreducible component of 
$\mathcal{E}_{\overline{V}}$ by Theorem \ref{1.7} and Theorem \ref{1.13}.
Because the natural morphism $p|_{Y_{\tau}}:Y_{\tau}\rightarrow \mathcal{X}_{\overline{V}}$ factors through $p|_{Y_{\tau}}:Y_{\tau}\rightarrow Z\hookrightarrow\mathcal{X}_{\overline{V}}$ for any $\tau\in \mathfrak{S}_d$, 
we obtain a map
$$t_{\mathcal{E}_{\overline{V}},z_{(V_x,T_{\tau})}}=t_{Y_{\tau}, z_{(V_x,T_{\tau})}}\rightarrow t_{Z,x}\hookrightarrow t_{\mathcal{X}_{\overline{V}}, x}$$
for each $\tau\in \mathfrak{S}_d$, where the first equality follows from Theorem \ref{1.7} and 
Theorem \ref{1.13}.
Summing up for all $\tau\in\mathfrak{S}_d$, we obtain a map 
$$\bigoplus_{\tau\in \mathfrak{S}_d}t_{\mathcal{E}_{\overline{V}}, z_{(V_x,T_{\tau})}}\rightarrow t_{Z,x}\hookrightarrow t_{\mathcal{X}_{\overline{V}}, x}.$$ 
By Theorem \ref{4.3} and Theorem \ref{1.7}, this map is surjective, hence we obtain an equality 
$$t_{Z,x}=t_{\mathcal{X}_{\overline{V}},x},$$ which proves the theorem.

 \end{proof}
 Let $\omega:G_K\rightarrow \mathbb{F}^{\times}$ be the mod $p$ cyclotomic character.  
 Set $\mathrm{ad}(\overline{V}):=\mathrm{End}_{\mathbb{F}}(\overline{V})$ and 
 $\mathrm{ad}(\overline{V})^0:=\mathrm{ad}(\overline{V})^{\mathrm{trace}=0}$.
 
 \begin{thm}\label{1.17}
 Assume that $\overline{V}$ satisfies the following conditions,
 \begin{itemize}
 \item[(0)]$\mathrm{End}_{\mathbb{F}[G_K]}(\overline{V})=\mathbb{F}$,
 \item[(1)]$\mathcal{X}_{\overline{V},\mathrm{reg}-\mathrm{cris}}$ is non empty,
 \item[(2)]$\mathrm{H}^0(G_K, \mathrm{ad}(\overline{V})^0(\omega))=0$,
 \item[(3)]$\zeta_p\not\in K^{\times}$, or $p\not| d$,
  \end{itemize}
  then, we have an equality $\overline{\mathcal{X}}_{\overline{V},\mathrm{reg}-\mathrm{cris}}=\mathcal{X}_{\overline{V}}$.
 \end{thm}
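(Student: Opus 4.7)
The plan is to combine Theorem \ref{1.16} with a smoothness result for $\mathcal{X}_{\overline{V}}$ derived from (2) and (3). Theorem \ref{1.16} already writes $\overline{\mathcal{X}}_{\overline{V},\mathrm{reg}-\mathrm{cris}}$ as a union of irreducible components of $\mathcal{X}_{\overline{V}}^{\mathrm{red}}$ of dimension $d^2[K:\mathbb{Q}_p]+1$, and this union is non-empty by (1). So it suffices to prove $(a)$ that $\mathcal{X}_{\overline{V}}$ is smooth of the expected dimension (in particular reduced, so that $\mathcal{X}_{\overline{V}}^{\mathrm{red}}=\mathcal{X}_{\overline{V}}$), and $(b)$ that every irreducible component of $\mathcal{X}_{\overline{V}}$ meets $\mathcal{X}_{\overline{V},\mathrm{reg}-\mathrm{cris}}$.

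For $(a)$, local Tate duality turns (2) into $\mathrm{H}^2(G_K,\mathrm{ad}(\overline{V})^0)=0$, and I split on (3). If $\zeta_p\notin K$, then $\mathrm{H}^2(G_K,\mathbb{F})=\mathrm{H}^0(G_K,\mathbb{F}(\omega))^{\vee}=0$, and the trace sequence $0\to\mathrm{ad}(\overline{V})^0\to\mathrm{ad}(\overline{V})\to\mathbb{F}\to 0$ yields $\mathrm{H}^2(G_K,\mathrm{ad}(\overline{V}))=0$; combined with (0) this makes $R_{\overline{V}}$ formally smooth over $\mathcal{O}$ of relative dimension $d^2[K:\mathbb{Q}_p]+1$, so $\mathcal{X}_{\overline{V}}$ is a single open polydisc. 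If $p\nmid d$, the trace splits $\mathrm{ad}(\overline{V})\simeq\mathrm{ad}(\overline{V})^0\oplus\mathbb{F}$, and twisting a deformation $V_A$ by a $d$-th root of a chosen deformation of $\det V_A\cdot\psi^{-1}$ (available because $d\in\mathcal{O}^{\times}$) identifies $D_{\overline{V}}$ with an \'etale $\mu_d$-quotient of $D_{\overline{V}}^{\psi}\times D_{\det\overline{V}}$ for a fixed lift $\psi$ of $\det\overline{V}$. The fixed-determinant factor is controlled by $\mathrm{ad}(\overline{V})^0$ and formally smooth by (2); passing to rigid generic fibres, $\mathcal{X}_{\det\overline{V}}$ is a disjoint union of open polydiscs of dimension $[K:\mathbb{Q}_p]+1$. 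Hence $\mathcal{X}_{\overline{V}}$ is again smooth of the right dimension, though possibly with several connected components indexed by continuous characters $\mu_{p^{\infty}}(K)\to\overline{E}^{\times}$.

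Case A is immediate from irreducibility and (1). For Case B I would prove $(b)$ by twisting the regular crystalline point supplied by (1) by crystalline characters of $G_K$ of the form $\prod_{\sigma\in\mathcal{P}}\sigma(\chi_{\mathrm{LT}})^{k_{\sigma}}$ times unramified characters: such twists preserve crystallinity and, for generic integer weights, distinctness of the Hodge--Tate weights, while their restrictions to $\mu_{p^{\infty}}(K)$ realise every character of this torsion subgroup, and therefore every component of $\mathcal{X}_{\det\overline{V}}$ and consequently every component of $\mathcal{X}_{\overline{V}}$. Once $(a)$ and $(b)$ are in hand, Theorem \ref{1.16} closes the argument. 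The main technical obstacle is the Case-B product decomposition: setting up the $\mu_d$-quotient identification $D_{\overline{V}}\simeq(D_{\overline{V}}^{\psi}\times D_{\det\overline{V}})/\mu_d$ correctly and propagating smoothness through this quotient on passage to rigid generic fibres, together with the precise description of the component structure of $\mathcal{X}_{\det\overline{V}}$ in terms of $\mu_{p^{\infty}}(K)\subseteq\mathcal{O}_K^{\times}$.
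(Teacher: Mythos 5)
Your overall structure matches the paper: combine Theorem \ref{1.16} with (i) smoothness of $\mathcal{X}_{\overline{V}}$ forced by (2)--(3) and (ii) the observation that the set of connected (equivalently, irreducible) components is finite and permuted transitively by crystalline character twists. Case $\zeta_p\notin K$ is identical to the paper: Tate duality and the trace sequence give $\mathrm{H}^2(G_K,\mathrm{ad}(\overline{V}))=0$, so $R_{\overline{V}}$ is formally smooth and $\mathcal{X}_{\overline{V}}$ is irreducible, and (1) finishes.

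In Case $p\nmid d$ you take a genuinely different route to the component structure. The paper works directly with the subfunctors $D_i\subseteq D_{\overline{V}}$ cut out by fixing $\det(V_A)(\mathrm{rec}_K(\zeta_{p^n}))=\iota_A(\sigma_0(\zeta_{p^n}))^i$; each $D_i$ is shown (citing Theorem 4.16 of \cite{Na10}) to be representable by a formally smooth quotient, giving $\mathcal{X}_{\overline{V}}=\coprod_i\mathcal{X}_i$ with each $\mathcal{X}_i$ irreducible. You instead reduce everything to $\mathcal{X}_{\det\overline{V}}$ via a fixed-determinant decomposition of $D_{\overline{V}}$. That is a legitimate and arguably more conceptual organisation of the same information (both boil down to: connected components are indexed by the determinant's value on $\mu_{p^\infty}(K)$). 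The paper's version is slightly leaner because it never needs to deal with $d$-th roots explicitly, while yours makes the role of $D_{\det\overline{V}}$ transparent. Your twisting step is also the same idea as the paper's: twist by crystalline characters built from $\chi_{\mathrm{LT}}$ to move the regular-crystalline point onto every component; the paper's specific choice $(\sigma_0\circ\chi_{\mathrm{LT}})^{(p^f-1)m}$ is a cleaner packaging of the congruence and surjectivity conditions you allude to.

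Two small inaccuracies, neither fatal. First, there is no $\mu_d$-quotient: since $A$ is a local ring with residue field of characteristic $p$ and $p\nmid d$, the $d$-th root of a character $\equiv 1\ (\mathrm{mod}\ \mathfrak{m}_A)$ exists and is \emph{unique}, so the natural map $D_{\overline{V}}^{\psi}\times D_{\det\overline{V}}\to D_{\overline{V}}$, $(V^{\psi}_A,\eta_A)\mapsto V^{\psi}_A\otimes(\eta_A/\psi)^{1/d}$, is already an isomorphism of functors; invoking an \'etale quotient only adds noise. Second, when you say the twists $\prod_{\sigma}\sigma(\chi_{\mathrm{LT}})^{k_{\sigma}}$ (times unramified) realise every character of $\mu_{p^{\infty}}(K)$, you must also ensure the twist is trivial modulo $\pi_E$ so that the twisted representation is still a point of $\mathcal{X}_{\overline{V}}$. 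Since $\bar{\chi}_{\mathrm{LT}}$ has image in $k^{\times}$ of order $p^f-1$, this forces $(p^f-1)\mid k_{\sigma}$; the relevant surjectivity onto $\mathrm{Hom}(\mu_{p^{\infty}}(K),\overline{E}^{\times})$ then holds precisely because $(p^f-1)d$ is prime to $p$, which is where $p\nmid d$ is used. This is exactly the arithmetic the paper encodes in the exponent $(p^f-1)m$ and the congruence $i_m\equiv i+(p^f-1)dm\ (\mathrm{mod}\ p^n)$; you should make it explicit rather than leave it at ``for generic integer weights''.
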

 \begin{proof}
 First, we prove the theorem when $\zeta_p\not\in K^{\times}$. It suffices to show that 
 $\mathcal{X}_{\overline{V}}$ is irreducible by Theorem \ref{1.16}. This claim follows from 
 the fact that $\mathrm{H}^2(G_K, \mathrm{ad}(\overline{V}))=0$, which follows from the condition (2) and 
 the fact that $\mathrm{H}^0(G_K, \mathbb{F}(\omega))=0$ when $\zeta_p\not\in K^{\times}$.

 Next, we prove the theorem when $p\not| d$. 
 Let $P$ be the sub group of $\mathcal{O}_K^{\times}$ consisting of all $p$-th power roots of unity.
 Let $p^n$ be the order of $P$, take $\zeta_{p^n}\in \mathcal{O}_{K}^{\times}$ a primitive 
 $p^n$-th roots of unity. Fix $\sigma_0\in \mathcal{P}$. For each $1\leqq i\leqq p^n-1$, we define a subfunctor $D_i$ of $D_{\overline{V}}$ by 
 $$D_i(A):=\{[V_A]\in D_{\overline{V}}(A)| \mathrm{det}(V_A)(\mathrm{rec}_{K}(\zeta_{p^n}))=\iota_A(\sigma_0(\zeta_{p^n}))^i\}$$ 
 for $A\in \mathcal{C}_{\mathcal{O}}$, where $\iota_A:\mathcal{O}\rightarrow A$ is the morphism which gives the $\mathcal{O}$-algebra 
 structure on $A$. In the same way as in the proof of Theorem 4.16 of \cite{Na10}, we can prove that, under the condition 
 (2), $D_i$ is representable by a quotient $R_i$ of $R_{\overline{V}}$ which is formally smooth over $\mathcal{O}$, and, if we denote by $\mathcal{X}_i$ the rigid analytic space associated to $R_i$, 
 we have 
 an equality as rigid space
 $$ \mathcal{X}_{\overline{V}}=\coprod_{0\leqq i\leqq p^n-1} \mathcal{X}_i$$
  and $\mathcal{X}_i$ is irreducible. By the condition (1) and by Theorem  \ref{1.16}, there exists 
  $i$ such that $\mathcal{X}_i\subseteq \overline{\mathcal{X}}_{\overline{V},\mathrm{reg}-\mathrm{cris}}$. 
  Because we have $\chi_{\mathrm{LT}}^{p^f-1}\equiv 1 (\text{mod } \pi_K)$ and $\chi_{\mathrm{LT}}(\zeta_{p^n})=\zeta_{p^n}$, 
  twisting by $(\sigma_0\circ\chi_{\mathrm{LT}})^{(p^f-1)m}$ for each $m\in\mathbb{Z}$ induces an isomorphism $\mathcal{X}_{i}\isom \mathcal{X}_{i_m}$
  for $0\leqq i_m\leqq p^n-1$ such that $i_m\equiv i+(p^f-1)dm$ (mod $p^n$). Because $\chi_{\mathrm{LT}}$ is crystalline, 
  this isomorphism implies that $\mathcal{X}_{i_m} \subseteq \overline{\mathcal{X}}_{\overline{V},\mathrm{reg}-\mathrm{cris}}$. Under the assumption that $p\not| d$, each $0\leqq j \leqq p^n-1$ is equal to $i_m$ for some 
  $m\in \mathbb{Z}$, hence 
  we obtain an equality $\overline{\mathcal{X}}_{\overline{V},\mathrm{reg}-\mathrm{cris}}=\mathcal{X}_{\overline{V}}$.

 \end{proof}
 Finally, when $\overline{V}$ is absolutely irreducible, we obtain 
 the following corollary, which is a generalization of Theorem $A$ of \cite{Ch13} for general $K$.
 \begin{corollary}\label{1.18}
Assume that 
 \begin{itemize}
 \item[(1)]$\overline{V}$ is absolutely irreducible,
 \item[(2)]one of the following conditions holds,
 \begin{itemize}
 \item[(i)]
 $p\not|d$ and $\zeta_p\in K$,
 \item[(ii)]$\overline{V}\not\isom \overline{V}(\omega)$, 

 \end{itemize}
 \end{itemize}
 then we have an equality $\overline{\mathcal{X}}_{\overline{V},\mathrm{reg}-\mathrm{cris}}=\mathcal{X}_{\overline{V}}$.
 
 \end{corollary}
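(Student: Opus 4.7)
The plan is to reduce Corollary \ref{1.18} to Theorem \ref{1.17} by checking its four hypotheses $(0)$, $(1)$, $(2)$, $(3)$ under each of the two alternative assumptions of the corollary.

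Condition $(0)$ is immediate from absolute irreducibility of $\overline{V}$ via Schur's lemma (using that $\mathbb{F}$ is chosen sufficiently large so that all endomorphisms of $\overline{V}$ are defined over $\mathbb{F}$). For condition $(2)$, I would split into two cases. In case $(ii)$, the standard identification $\mathrm{ad}(\overline{V})(\omega)\isom \overline{V}^{\vee}\otimes_{\mathbb{F}}\overline{V}(\omega)$ gives
\[
\mathrm{H}^0(G_K, \mathrm{ad}(\overline{V})(\omega)) = \mathrm{Hom}_{\mathbb{F}[G_K]}(\overline{V}, \overline{V}(\omega)) = 0,
\]
the last equality by Schur together with $\overline{V}\not\isom \overline{V}(\omega)$; a fortiori $\mathrm{H}^0(G_K,\mathrm{ad}(\overline{V})^0(\omega))=0$. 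In case $(i)$, $\zeta_p\in K$ forces $\omega$ to be trivial, so $\mathrm{ad}(\overline{V})(\omega)=\mathrm{ad}(\overline{V})$ and $\mathrm{H}^0(G_K,\mathrm{ad}(\overline{V}))=\mathbb{F}\cdot\mathrm{id}_{\overline{V}}$ by $(0)$; since $p\nmid d$, the trace of $\mathrm{id}_{\overline{V}}$ is $d\ne 0$ in $\mathbb{F}$, so $\mathrm{id}_{\overline{V}}\notin \mathrm{ad}(\overline{V})^0$, and therefore $\mathrm{H}^0(G_K,\mathrm{ad}(\overline{V})^0(\omega))=0$.

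Condition $(3)$ is equally short: under $(i)$, $p\nmid d$ is directly assumed; under $(ii)$, the hypothesis $\overline{V}\not\isom\overline{V}(\omega)$ rules out $\omega=1$, since otherwise $\overline{V}(\omega)=\overline{V}$, and hence $\zeta_p\notin K$.

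The main remaining task, and the principal obstacle of the proof, is condition $(1)$: one must produce at least one crystalline lift of $\overline{V}$ whose $\sigma$-Hodge--Tate weights are pairwise distinct for every $\sigma\in\mathcal{P}$. For absolutely irreducible $\overline{V}$ this is not elementary; it relies on the known existence results for crystalline lifts of absolutely irreducible mod $p$ representations of $G_K$ with sufficiently spread-out and dominant Hodge--Tate weights (in the style of Muller, Berger--Li--Zhu, or Emerton--Gee). Granted $(1)$, the four hypotheses of Theorem \ref{1.17} are verified in each of the two cases, and that theorem directly yields $\overline{\mathcal{X}}_{\overline{V},\mathrm{reg}-\mathrm{cris}}=\mathcal{X}_{\overline{V}}$, completing the proof.
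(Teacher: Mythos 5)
Your reduction to Theorem \ref{1.17} and the verifications of conditions $(0)$, $(2)$, $(3)$ are all correct and match the paper's argument in substance (your case-$(i)$ argument for $(2)$ via $\mathrm{tr}(\mathrm{id})=d\ne 0$ is a slight rephrasing of the paper's use of the split exact sequence, but is equivalent).

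The genuine gap is in condition $(1)$, the non-emptiness of $\mathcal{X}_{\overline{V},\mathrm{reg}-\mathrm{cris}}$, which you correctly identify as the main task but then outsource to heavy external crystalline-lifting theorems (Berger--Li--Zhu, Emerton--Gee, etc.). The paper in fact treats this as the easy part, precisely because $\overline{V}$ is assumed \emph{absolutely irreducible}: by the standard classification, $\overline{V}\isom \mathrm{Ind}^{G_K}_{G_{K_d}}\bigl((\widetilde{\sigma}_0\circ\chi_d)^l\bigr)\otimes\mathbb{F}(\eta)$ where $K_d/K$ is the unramified extension of degree $d$ and $\chi_d$ is the fundamental character. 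Lifting $\eta$ to a crystalline character is easy, and one lifts $(\widetilde{\sigma}_0\circ\chi_d)^l$ to a product of integer powers of embedding-twisted Lubin--Tate characters $\prod_{\sigma,i}(\widetilde{\sigma}\tau^i\circ\chi_{d,\mathrm{LT}})^{a_{\sigma,i}}$, freely choosing the $a_{\sigma,i}$ pairwise distinct for each $\sigma$; the induction of that crystalline character is then a crystalline lift of $\overline{V}$ with pairwise distinct $\sigma$-Hodge--Tate weights, as required. So where your proposal invokes a black box (and, as stated, does not even explain why the general lifting results would deliver regularity of the Hodge--Tate weights at every embedding), the paper gives a short, self-contained, elementary construction exploiting the induced structure. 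To close the gap, replace the appeal to general crystalline-lift theorems with this explicit construction, or at minimum cite the classification of absolutely irreducible mod $p$ representations and verify the Hodge--Tate regularity of the resulting induced lift.
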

 \begin{proof}
 By Theorem \ref{1.17}, it suffices to show that the condition (1), (2) of the theorem implies 
 the conditions (2), (3) of Theorem \ref{1.17}.
 
 We first claim that, under the assumption (1), the condition (2) implies (in fact is equivalent to) that 
 $\mathrm{H}^0(G_K,\mathrm{ad}(\overline{V})^0(\omega))=0$. This claim easily follows from 
 the existence of the natural short exact sequence of $\mathbb{F}[G_K]$-modules 
 $$0\rightarrow \mathrm{ad}(\overline{V})^0(\omega)\rightarrow
 \mathrm{ad}(\overline{V})(\omega)\xrightarrow{\mathrm{trace}}\mathbb{F}(\omega)
 \rightarrow 0,$$ which splits when $p\not|d$.
 
 We finally check that $\mathcal{X}_{\overline{V},\mathrm{reg}-\mathrm{cris}}$ is non empty under the 
 assumption that $\overline{V}$ is absolutely irreducible. This fact may be well known, but for convenience of readers, we recall a proof of this fact. 
 We use the notation used in the proof of Theorem \ref{1.17}.
 Let $K_d$ be the unramified extension of $K$ such that $[K_d:K]=d$.
 By extending $E$, we assume that $\mathrm{Hom}_{\mathbb{Q}_p-\mathrm{alg}}(K_d, E)=\mathrm{Hom}_{\mathbb{Q}_p-\mathrm{alg}}
 (K_d, \overline{\mathbb{Q}}_p)$. Fix $\widetilde{\sigma}_0:K_d\hookrightarrow E$ such that $\widetilde{\sigma}_0|_K=\sigma_0$. 
 Let $\chi_d:G_{K_d}^{\mathrm{ab}}\rightarrow \mathbb{F}^{\times}_{q^d}=
 (\mathcal{O}_{K_d}/\pi_K\mathcal{O}_{K_d})^{\times}$ be the fundamental character of 
 degree $d$, i.e. the character defined by $\chi_d(\mathrm{rec}_{K_d}(a)):=\bar{a}$ ($a\in\mathcal{O}_{K_d}^{\times}$), 
 $\chi_d(\mathrm{rec}_{K_d}(\pi_K))=1$. Then it is known that there exists an isomorphism
 $$\overline{V}\isom \mathrm{Ind}^{G_K}_{G_{K_d}}((\widetilde{\sigma}_0\circ \chi_d)^l)\otimes_{\mathbb{F}}\mathbb{F}(\eta)$$ for some 
 $l\in \mathbb{Z}$ and $\eta:G_K^{\mathrm{ab}}\rightarrow \mathbb{F}^{\times}$.
Because any $\eta$ has a crystalline 
 lift, we may assume that 
 $\overline{V}\isom \mathrm{Ind}^{G_K}_{G_{K_d}}((\widetilde{\sigma}_0\circ \chi_d)^i))$. 
Let 
 $\chi_{d,\mathrm{LT}}:G_{K_d}^{\mathrm{ab}}\rightarrow K_d^{\times}$ be the Lubin-Tate character of $K_d$ associated to 
 $\pi_K\in K\subseteq K_d$, and let $\tau$ be a generator of $\mathrm{Gal}(K_d/K)$. 
 For each $\sigma\in \mathcal{P}$, choose
 $\widetilde{\sigma}:K_d\rightarrow E$ a $\mathbb{Q}_p$-algebra homomorphism such that $\widetilde{\sigma}|_{K}=\sigma$. 
 Then, one can take some $\{a_{\sigma,i}\}_{\sigma\in \mathcal{P},0\leqq i\leqq d-1}\in \prod_{\sigma\in \mathcal{P},0\leqq i\leqq d-1}\mathbb{Z}$ 
 such that $a_{\sigma,i}\not=a_{\sigma,j}$ for any $i\not =j$ and that $$(\widetilde{\sigma}_0\circ \chi_d)^l\equiv \prod_{\sigma\in \mathcal{P},0\leqq i\leqq d-1}(\widetilde{\sigma}\tau^i\circ \chi_{d,\mathrm{LT}})^{a_{\sigma,i}} ( \text{mod } \pi_E ).$$ 
Then, $$\mathrm{Ind}^{G_K}_{G_{K_d}}(\prod_{\sigma\in \mathcal{P},0\leqq i\leqq d-1}(\widetilde{\sigma}\tau^i\circ \chi_{d,\mathrm{LT}})^{a_{\sigma,i}})$$ is a lift of $\overline{V}$ which is a crystalline representation whose $\sigma$-part of Hodge-Tate weights 
 is $\{a_{\sigma,i}\}_{0\leqq i\leqq d-1}$, i.e. this is an element of $\mathcal{X}_{\overline{V},\mathrm{reg}-\mathrm{cris}}$, which proves 
 the claim, hence proves the corollary.

 \end{proof}

\end{document}